\newlength{\dhatheight}
\newcommand{\doublehat}[1]{%
  \settoheight{\dhatheight}{\ensuremath{\hat{#1}}}%
  \addtolength{\dhatheight}{-0.25ex}%
  \widehat{\vphantom{\rule{1pt}{\dhatheight}}%
    \smash{\widetilde{#1}}}}
\renewcommand{\doublehat}[1]{\widetilde{#1}}
\newtheorem{theorem}{Theorem}[section]
\newtheorem{corollary}[theorem]{Corollary}
\newtheorem{lemma}[theorem]{Lemma}
\newtheorem{proposition}[theorem]{Proposition}
\theoremstyle{definition}
\newtheorem{definition}[theorem]{Definition}
\theoremstyle{remark}
\newtheorem{remark}[theorem]{Remark}
\newtheorem{example}[theorem]{Example}
\numberwithin{equation}{section}
\renewcommand{\epsilon}{\varepsilon}  
\newcommand{\F}{\mathcal{F}}
\newcommand{\fT}{\mathfrak{T}}   
\newcommand{\Q}{\mathbb{Q}}
\newcommand{\R}{\mathbb{R}}
\newcommand{\Z}{\mathbb{Z}}
\newcommand{\C}{\mathbb{C}}
\newcommand{\FF}{{\mathbb{F}}} 
\newcommand{\Sph}{\mathbb{S}} 
\DeclareMathOperator{\Ran}{Ran}  
\DeclareMathOperator{\Ker}{Ker}  
\DeclareMathOperator{\Span}{span}
\DeclareMathOperator{\rank}{rank}
\DeclareMathOperator{\Hom}{Hom}  
\DeclareMathOperator{\Tor}{Tor}  
\newcommand{\esp}{\mathbf{E}} 
\newcommand{\cH}{{\mathcal H}} 
\newcommand{\Gr}{\mathrm{Gr}} 
\newcommand{\Taut}{\mathrm{Taut}} 
\newcommand{\B}{\mathcal{B}} 
\newcommand{\cT}{\mathcal{T}} 
\newcommand{\Sym}{\mathrm{Sym}} 
\DeclareMathOperator{\spec}{spec}
\newcommand{\tM}{\widetilde{M}}
\newcommand{\tx}{\widetilde{x}}
\newcommand{\term}[1]{\textbf{#1}}
\title[Morse inequalities for ordered eigenvalues]{Morse inequalities
  for ordered eigenvalues of generic self-adjoint families}
\date{\today}
\thanks{}
\author{Gregory Berkolaiko}
\address{Gregory Berkolaiko\\
	Department of Mathematics\\
	Texas A\&M University\\
	College Station\\
	Texas \ 77843\\
	USA}
\email{berko@math.tamu.edu}
\urladdr{\url{http://www.math.tamu.edu/~berko}}
\author{Igor Zelenko}
\address{Igor Zelenko\\
	Department of Mathematics\\
	Texas A\&M University\\
	College Station\\
	Texas \ 77843\\
	USA}
\email{zelenko@math.tamu.edu}
\urladdr{\url{http://www.math.tamu.edu/~zelenko}}
\DeclareMathOperator{\codim}{codim}
\DeclareMathOperator{\Tr}{Tr}
\begin{document}
\subjclass[2020]{47A56, 47A10, 57R70, 58K05, 49J52, 14M15, 58A35, 57Z05}

\keywords{Dirac points, Weyl points, diabolical points, conical
  intersections, operator-valued functions, Morse inequalities, Clarke
  subdifferential, homology of Grassmannians, stratified Morse theory}

\begin{abstract}
  In many applied problems one seeks to identify and count the
  critical points of a particular eigenvalue of a smooth parametric
  family of self-adjoint matrices, with the parameter space often being
  known and simple, such as a torus.  Among particular settings where
  such a question arises are the Floquet--Bloch decomposition of
  periodic Schr\"odinger operators, topology of potential energy
  surfaces in quantum chemistry, spectral optimization problems such
  as minimal spectral partitions of manifolds, as well as nodal
  statistics of graph eigenfunctions.  In contrast to the
  classical Morse theory dealing with smooth functions, the
  eigenvalues of families of self-adjoint matrices are not smooth at
  the points corresponding to repeated eigenvalues (called, depending
  on the application and on the dimension of the parameter space,
  the diabolical/Dirac/Weyl points or the conical intersections).

  This work develops a procedure for associating a Morse polynomial to
  a point of eigenvalue multiplicity; it utilizes the assumptions of
  smoothness and self-adjointness of the family to provide concrete
  answers.  In particular, we define the notions of non-degenerate
  topologically critical point and generalized Morse family, establish
  that generalized Morse families are generic in an appropriate sense,
  establish a differential first-order conditions for criticality, as
  well as compute the local contribution of a topologically critical
  point to the Morse polynomial.  Remarkably, the non-smooth
  contribution to the Morse polynomial turns out to
  depend only on the size of the eigenvalue multiplicity and the
  relative position of the eigenvalue of interest and not on the
  particulars of the operator family; it is expressed in terms of the
  homologies of Grassmannians.
\end{abstract}

\maketitle

\section{Introduction}

Let $\mathrm{Sym}_n(\R)$ and $\mathrm{Sym}_n(\C)$ denote the spaces of
$n\times n$ real symmetric (correspondingly, complex Hermitian)
matrices.  When referring to both spaces at once, we will use the term
``self-adjoint matrices'' and use the notation $\mathrm{Sym}_n$.  The
eigenvalues $\{\widehat\lambda_i(A)\}_{i=1}^n$ of a matrix
$A\in \mathrm{Sym}_n$ are real and will be numbered in the increasing
order,
\begin{equation}
\label{spectrum}
\widehat\lambda_1(A) \leq \widehat\lambda_2(A) \leq \cdots \leq \widehat\lambda_n(A).
\end{equation}
Further, let $M$ be a smooth (i.e.\ $C^\infty$) compact $d$-dimensional
manifold. A smooth $d$-parametric family of self-adjoint matrices (on $M$)
is a smooth map $\F: M \to \mathrm{Sym}_n$.

The aim of this paper is to develop the Morse
theory for the $k$-th ordered eigenvalue
\begin{equation} 
\label{branch} 	
\lambda_k:=\widehat \lambda_k\circ \F
\end{equation}
viewed as a function on $M$.  This question is motivated by numerous
problems in mathematical physics.  The boundaries between isolating
and conducting regimes in a periodic (crystalline) structure are
determined by the extrema of eigenvalues of an operator\footnote{The
  particulars of the operator depend on what is being conducted:
  electrons, light, sound, etc.} family defined on a $d$-dimensional
torus $M$ (for an introduction to the mathematics of this subject, see
\cite{Kuc_bams16}).  Other critical points of the eigenvalues give
rise to special physically observable features of the density of
states, the van Hove singularities \cite{vHo_prl53}.  Classifying all
critical points of an eigenvalue (also on a torus) by their degree is
used to study oscillation of eigenfunctions via the nodal--magnetic
theorem \cite{Ber_apde13,Col_apde13,AloGor_jst23,AloGor_prep24}.  More
broadly, the area of eigenvalue optimization encompasses questions
from understanding the charge distribution in an atomic nucleus
\cite{EstLewSer_plms21}, configuration of atoms in a polyatomic
molecule \cite{ConicalIntersections_book,Mat_cr21}, to shape
optimization \cite{Henrot_book06,Henrot_book17} and optimal partition
of domains and networks \cite{HelHof_jems13,BanBerRazSmi_cmp12}.  The
dimension of the manifold $M$ in these applications can be very high
or even infinite.

Morse theory is a natural tool for connecting statistics of the
critical points with the topology of the underlying manifold.
However, the classical Morse theory is formulated for functions that
are sufficiently smooth, whereas the function $\lambda_k$ is
generically non-smooth at the points where $\lambda_k(x)$ is a
repeated eigenvalue of the matrix $\F(x)$.  And it is these
points of non-smoothness that play an outsized role in the
applications \cite{CasGraphen_rmp09,ConicalIntersections_book}.

By Bronstein's theorem \cite{Bro_smz79}, each $\lambda_k$ is
Lipschitz.  Furthermore, by classical perturbation theory \cite{Kato}, the
function $\lambda_k$ is smooth along a submanifold $N \subset M$ if
the multiplicity of $\lambda_k(x)$ is constant on $N$; the latter
property induces a stratification of $M$.  There exist
generalizations of Morse theory to Lipschitz functions \cite{APS97},
continuous functions \cite[\S45]{FF89}, as well as to stratified
spaces \cite{GM88}.  These generalizations will provide the
general foundation for our work, but the principal thrust of this
paper is to leverage the properties of $\Sym_n$ and to get explicit
--- and beautiful --- answers for the Morse data in terms of the local
behavior of $\F$ at a discrete set of points we will identify as
``critical''.  One of the surprising findings is that the Morse data
attributable to the non-smooth directions at a critical point does not
depend on the particulars of the family $\F$.

To set the stage for our results we now review informally the main
ideas of Morse theory, which links the topological invariants of the
manifold $M$ to the number and the indices of the critical points of a
function $\phi$ on $M$.

In more detail, if $\phi$ is smooth, a point $x\in M$ is called a
\term{critical point} if the differential of $\phi$ vanishes at $x$.
The \term{Hessian} (second differential) of $\phi$ at $x$ is a
quadratic form on the tangent space $T_x M$.  In local coordinates it
is represented by the matrix of second derivatives, the \term{Hessian
  matrix}.  The \term{Morse index} $\mu(x)$ is defined as the negative
index of this quadratic form or, equivalently, the number of negative
eigenvalues of the Hessian matrix.  It is assumed that the second
differential at every critical point of $\phi$ is non-singular, i.e.\
the Hessian matrix has no zero eigenvalues; such critical points are
called \term{non-degenerate}.  Non-degenerate critical points are
isolated and therefore there are only finitely many of them on $M$. A
smooth function $\phi$ is called a \term{Morse function} if all its
critical points are non-degenerate.

The main result of the classical Morse theory quantifies the change in
the topology of the level curves of $\phi$ around a critical point.
To be precise, for a point $x\in M$ and its neighborhood $U$
(which we will always assume to be homeomorphic to a ball)
define the \term{local sublevel sets},
\begin{equation}
  \label{eq:local_sublevel_set_def}
  U_x^{-\epsilon}(\phi)
  := \{y\in U \colon \phi(y) \leq \phi(x)-\epsilon\}
  \qquad \text{and} \qquad
  U_x^{+\epsilon}(\phi)
  := \{y\in U \colon \phi(y) \leq \phi(x)+\epsilon\}.  
\end{equation}
If $x$ is a non-degenerate critical point of index $\mu=\mu(x)$, then,
for a sufficiently small neighborhood $U$ of $x$ and sufficiently
small $\epsilon>0$, the quotient space
$U_x^{+\epsilon}(\phi)/U_x^{-\epsilon}(\phi)$ is homotopy
equivalent to the $\mu$-dimensional sphere $\Sph^\mu$.  The global
consequences of this are the \term{Morse inequalities}: given a Morse
function $\phi$, denote by $c_q$, $q=0,\ldots,d$,
the number of its critical points of index $q$.  Then there exist $d$
integers $r_q \geq 0$ such that
\begin{equation}
  \label{eq:Morse_ineq_full}
  \begin{split}
  c_0 &= \beta_0 + r_1,\\
  c_1 &= \beta_1 + r_1 + r_2,\\
  c_2 &= \beta_2 + r_2 + r_3,\\
  &\ldots\\
  c_{d-1} &= \beta_{d-1}+r_{d-1}+r_d,\\
  c_d &= \beta_d + r_d,    
  \end{split}
\end{equation}
where $\beta_q$ is the $q$-th
Betti number of the manifold $M$, defined as the rank of the homology
group\footnote{Throughout the paper, we use integer coefficient
  homology, unless specified otherwise.} $H_q(M)=H_q(M;\Z)$.  To put it
another way, the Betti numbers $\beta_q$ give the lower bound for the 
number of critical points of index $q$; extra critical points can only
be created in pairs of adjacent index.

Equations~\eqref{eq:Morse_ineq_full} can be expressed concisely in
terms of generating functions: one defines the \term{Morse polynomial}
$P_\phi(t)$ of a Morse function $\phi$ as the sum of $t^{\mu(x)}$ over
all critical points $x \in M$ of $\phi$.  On the topological side, the
\term{Poincar\'e polynomial} $P_{M}(t)$ of the manifold $M$ is the sum
of $\beta_qt^q$.  Then the \term{Morse inequalities} are equivalent to
the identity
\begin{equation}
\label{eq:Morse_inequalities}
\left( P_\phi(t) - P_{M}(t) \right) / (1+t) = R(t),
\end{equation}
where $R(t)$ is a polynomial with nonnegative coefficients.  

Now assume that $\phi$ is just continuous; the local sublevel sets
$U^{\pm\epsilon}_x(\phi)$ are still well-defined.  Mimicking the
classical Morse theory of smooth functions we adopt the following
definitions (cf.\ \cite[\S 45, Def.\ 1, 2 and 3]{FF89}, the critical
points are called bifurcation points there):

\begin{definition}
  \label{gencritdef}	
  A point $x\in M$ is a \term{topologically regular point} of a continuous function
  $\phi$ if there exists a small enough neighborhood $U$ of $x$ in $M$ and
  $\epsilon>0$ such that $U_x^{-\epsilon}(\phi)$
  is a strong deformation retract
  of $U_x^{+\epsilon}(\phi)$. We say that a point is
  \term{topologically critical} if it is not topologically regular.
\end{definition}

\begin{remark}
  \label{rem:crit_not_top_crit}
  If $\phi$ is smooth, a topologically critical point $x$ is also
  critical in the usual (differential) sense.  The converse is, in
  general, not true: for example, if $M=\mathbb R$ and $\phi(x)=x^3$,
  then $x=0$ is critical but not topologically critical.  On the other
  hand, by the aforementioned main result of the classical Morse
  theory, if $x$ is a \emph{non-degenerate} critical point then it is
  also topologically critical.
\end{remark}

\begin{definition}
  \label{Morsepoly} 
  Given a continuous function $\varphi$ with a finite set of topologically critical
  points, the \term{Morse
  polynomial} $P_\phi$ is the sum, over the  topologically critical points $x$, of the
  Poincar\'e polynomials of the relative homology groups
  $H_*\big(U_x^{+\epsilon}(\phi),\,
  U_x^{-\epsilon}(\phi)\big)$,
  where $U$ is a small neighborhood of $x$ and $\epsilon>0$ is
  sufficiently small.
\end{definition}

\begin{remark}
  If $\phi$ is a \emph{smooth} Morse function,
  Definition~\ref{Morsepoly} reduces to the classical one as the
  relative homology groups
  $H_*\bigl(U_x^{+\epsilon}(\phi),\,
  U_x^{-\epsilon}(\phi)\bigr)$ coincide with the reduced
  homology groups of the $\mu(x)$-dimensional sphere
  $\mathbb S^{\mu(x)}$, where $\mu(x)$ is the Morse index of $x$, and
  so the contribution of $x$ to the Morse polynomial $P_\phi(t)$ is
  equal to $t^{\mu(x)}$.
\end{remark}

With Definitions \ref{gencritdef} and \ref{Morsepoly}, the Morse
inequalities \eqref{eq:Morse_inequalities} hold true for continuous
functions $\phi$ with finite number of topologically critical points
(see, e.g., \cite[\S 45, Theorem.\ 1]{FF89}). The proof is essentially
the same as the proof of the classical Morse inequality given in
\cite[\S 5]{Milnor_MorseTheory} and is based on the exact sequence of
pairs: the latter implies the subadditivity of relative Betti numbers
and, more generally, of the partial alternating sums of relative Betti
numbers, which implies the required Morse inequalities.

\emph{It is thus our goal to give a
  prescription for computing the Morse polynomial $P_{\lambda_k}$
  in terms of $\F$ and its derivatives, under some natural
assumptions on $\F$.}  To
that end we will need to:
\begin{enumerate}
\item Provide an explicit characterization of non-smooth topologically
  critical and topologically regular points of $\lambda_k$;
\item Give a natural definition of a \emph{non-degenerate} non-smooth
  topologically critical point;
\item For a non-degenerate topologically critical point $x$ of $\lambda_k$, find
  the relative homology
  \begin{equation*}
    H_q(U_x^{\epsilon}(\lambda_k),\,
    U_x^{-\epsilon}(\lambda_k)) 
  \end{equation*}
  for a sufficiently small neighborhood $U$ of $x$ and sufficiently
  small $\epsilon>0$.  As a by-product, this will determine the
  correct contribution from $x$ to the Morse polynomial
  $P_{\lambda_k}(t)$ of $\lambda_k$.
\end{enumerate}
We remark that these questions are local in nature and we do not
need to enforce compactness of $M$ while answering them.

In this work, we completely implement the above objectives in the case
of generic smooth families; additionally, our sufficient condition for a
regular point is obtained for arbitrary families.  The first objective
is accomplished in the form of a ``first derivative test'', with the
derivative being applied to the smooth object: the family $\F$ (see
equation~\eqref{eq:HF_matrix} and Theorems \ref{thm:regular} and
\ref{thm:critical} for details).

The Morse contribution of a critical point (third objective) will
consist of two parts: the classical index of the Hessian of
$\lambda_k$ in the directions of smoothness of $\lambda_k$ and a
contribution from the non-smooth directions which, remarkably, turns
out to \emph{depend only on the size of the eigenvalue multiplicity
  and the relative position of the eigenvalue of interest and not on
  the particulars of the operator family}.
Theorem~\ref{thm:homology_geom} expresses this contribution in terms
of homologies of suitable Grassmannians; explicit formulas for the
Poincar\'e polynomial are provided in Theorem~\ref{thm:critical}.  In
Section~\ref{sec:applications} we mention some simple practical
corollaries of our results as well as pose further problems.

\subsection{A differential characterization of a topologically critical point}
\label{sec:intro_cp}

Our primary focus is on the points $x\in M$ where the eigenvalue
$\lambda_k$ has multiplicity and is not differentiable.  However,
simple examples (for instance, Example~\ref{ex:inclined_cones} below)
show that not every point of eigenvalue multiplicity is topologically
critical.

\newcommand\U{\mathcal{U}}

Denote by $\esp_k$ the
eigenspace of $\lambda_k$ at a point $x\in M$ of multiplicity
$\nu = \dim\esp_k$. The \term{compression} of a matrix $X \in
  \Sym_n$ to the space $\esp_k$ is the linear operator
  $X_{\esp_k}: \esp_k \to \esp_k$ acting as $v \mapsto P_{\esp_k} X
  v$, where $P_{\esp_k}$ is the orthogonal projector onto $\esp_k$.
  The matrix representation of $X_{\esp_k}$ can be computed as
  \begin{equation}
    \label{eq:restriction_def}
    X _{\esp_k} := \U^* X \U,
  \end{equation}
  where $\U: \mathbb{F}^\nu\to\mathbb{F}^n$ is a linear isometry such
  that $\Ran(\U) = \esp_k$ (explicitly, the columns of $\U$ are an
  orthonormal basis of $\esp_k$).  Introduce the linear operator
$\mathcal{H}_{x}: T_{x}M \to \Sym_\nu$ acting as
\begin{equation}
  \label{eq:HF_matrix}
  \cH_{x} \colon
  v \mapsto \big( d\mathcal{F}(x)v \big)_{\esp_k},
\end{equation}
While the operator $\cH_{x}$ depends on the choice of the isometry
$\U$ in \eqref{eq:restriction_def} (or, equivalently, the choice of basis in
$\esp_k$), we will only use its properties that are invariant
under unitary conjugation.

We recall that a matrix $A\in\Sym_\nu$ is \term{positive semidefinite}
(notation: $A\in\Sym_\nu^+$) if all of its eigenvalues are
non-negative, \term{positive definite} (notation: $A\in\Sym_\nu^{++}$)
if all eigenvalues are strictly positive.  
 We denote by $S^\perp$ the orthogonal
complement of a space $S$ in $\Sym_\nu$ with respect to the
\term{Frobenius inner product}
$\langle X, Y\rangle:=\Tr (XY)$.  For future reference we note that if
$X\in\Sym_\nu^{++}$ and $Y\in \Sym_\nu^{+}$, $Y\neq 0$, then $\langle
X, Y\rangle>0$ (see, e.g., \cite[Example~2.24]{BoydVandenberghe_optimization}).

Our first main result gives a sufficient condition for a point of
eigenvalue multiplicity to be topologically regular.

\begin{theorem}	
  \label{thm:regular}
  Let $\F: M \to \Sym_n$ be a smooth family whose eigenvalue
  $\lambda_k$ has multiplicity $\nu\geq1$ at the point $x\in M$.  If
  $\Ran\cH_{x}$ contains a positive definite matrix or, equivalently\footnote{Note that this equivalence is not immediate and is established  in the beginning of the proof of Theorem \ref{thm:APS_Clarke_regular} below.},
  \begin{equation}
    \label{eq:regular_condition}
    \left(\Ran\cH_{x}\right)^\perp \cap \Sym_\nu^+ = 0,
  \end{equation}
  then $x$ is topologically regular for $\lambda_k$.
\end{theorem}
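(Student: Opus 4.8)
The plan is to show that under condition \eqref{eq:regular_condition} the eigenvalue $\lambda_k$ behaves, near $x$, like a smooth submersion in the sense relevant to sublevel sets: one can deform $U_x^{+\epsilon}(\lambda_k)$ onto $U_x^{-\epsilon}(\lambda_k)$ by flowing along a vector field that strictly decreases $\lambda_k$. The obstacle is that $\lambda_k$ is only Lipschitz, so I cannot use an ordinary gradient; instead I would work with the Clarke subdifferential $\partial\lambda_k(x)$ and the generalized directional derivative. The key local fact is the variational description of the subdifferential of an ordered eigenvalue at a point of multiplicity $\nu$: modulo the smooth part of $\F$, the relevant piece of $\partial\lambda_k(x)$ is the image under $\cH_x^*$ (or, dually, described by testing against) the set of density matrices on $\esp_k$, i.e.\ $\{\rho\in\Sym_\nu^+ : \Tr\rho = 1\}$, possibly combined with the multiplicities of eigenvalues above and below $\lambda_k$ that coincide with it. Concretely, a direction $v\in T_xM$ is a \emph{descent direction} for $\lambda_k$ precisely when $\langle \cH_x v,\rho\rangle < 0$ for every such $\rho$, equivalently when $\cH_x v$ is negative definite on $\esp_k$ (in the worst case where the whole eigenspace participates), i.e.\ when $-\cH_x v \in \Sym_\nu^{++}$.

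The heart of the argument is then the following duality/convexity step. Condition \eqref{eq:regular_condition} says $(\Ran\cH_x)^\perp \cap \Sym_\nu^+ = 0$. I would first establish the equivalence claimed in the theorem: $(\Ran\cH_x)^\perp\cap\Sym_\nu^+ = 0$ iff $\Ran\cH_x$ contains a positive definite matrix. This is a separating-hyperplane argument: $\Sym_\nu^{++}$ is an open convex cone, $\Ran\cH_x$ is a subspace; if they were disjoint, a supporting functional would give a nonzero $Y\in\Sym_\nu^+$ (the cone $\Sym_\nu^+$ is self-dual under the Frobenius product) annihilating $\Ran\cH_x$, i.e.\ $Y\in(\Ran\cH_x)^\perp\cap\Sym_\nu^+$, $Y\ne0$ — contradiction; conversely if $A=\cH_x v_0\in\Sym_\nu^{++}$ then $\langle A,Y\rangle>0$ for all nonzero $Y\in\Sym_\nu^+$ by the cited fact, so no such $Y$ lies in $(\Ran\cH_x)^\perp$. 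Having a positive definite $A = \cH_x v_0$ in the range, the direction $-v_0$ is a genuine descent direction for $\lambda_k$: by the first-order expansion of ordered eigenvalues (Lidskii-type / perturbation theory), $\lambda_k(x + t(-v_0)) = \lambda_k(x) - t\,\lambda_{\min}(A) + o(t)$ uniformly, and more generally the Clarke derivative of $\lambda_k$ at $x$ in direction $-v_0$ is negative — in fact bounded away from $0$ on a neighborhood, since $\cH_y$ depends continuously on $y$ and positive definiteness is an open condition.

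From a uniform descent direction one concludes topological regularity by a standard deformation-retract construction: using a partition of unity / the continuity of $\partial\lambda_k$, extend $-v_0$ (or a nearby choice at each point) to a Lipschitz vector field $V$ on a neighborhood $U$ of $x$ along which $\lambda_k$ decreases at a rate $\geq c > 0$; the flow of $V$ exists (Lipschitz vector fields have unique flows) and pushes each point into lower sublevel sets in finite time, giving a strong deformation retraction of $U_x^{+\epsilon}(\lambda_k)$ onto $U_x^{-\epsilon}(\lambda_k)$ for suitably small $U$ and $\epsilon$. Care is needed so that the flow does not leave $U$ before the level drops by $2\epsilon$, which is arranged by first fixing $V$ and the descent rate $c$, then shrinking $\epsilon$ relative to the size of $U$ and the speed bound on $V$; points already in $U_x^{-\epsilon}$ are kept fixed by the usual cutoff-in-time trick. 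This is exactly the Lipschitz-function analogue of Milnor's deformation lemma, and I expect the cited APS-type theory \cite{APS97} (and the internal Theorem~\ref{thm:APS_Clarke_regular} referenced in the footnote) to supply the abstract statement, so that the real content here is the convex-geometry identification of \eqref{eq:regular_condition} with the existence of a Clarke descent direction. That convex-geometry step, together with correctly pinning down which density matrices enter $\partial\lambda_k$ when $\lambda_k(x)$ also equals neighbouring ordered eigenvalues, is the main obstacle; everything else is soft.
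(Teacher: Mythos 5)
Your proposal follows essentially the same route as the paper: establish the equivalence in the footnote by a separating-hyperplane / dual-cone argument, show the condition produces a direction $-v_0$ along which the Clarke generalized directional derivative of $\lambda_k$ is negative, and then invoke the APS-type result (Theorem~\ref{thm:APS_Clarke_regular}) to pass from Clarke regularity to topological regularity. Two small notes: the paper relies on the one-sided bound $\lambda_k^\circ(x,v)\leq\lambda^{\max}\bigl(\cH_x(v)\bigr)$ from Cox and Hiriart-Urruty--Lewis rather than an exact ``precisely when'' characterization of descent directions via density matrices (the sufficiency is all that is needed, and your stronger phrasing would require more care about the relative index of $\lambda_k$ in its eigenvalue group); and the first-order formula $\lambda_k(x-tv_0)=\lambda_k(x)-t\lambda_{\min}(A)+o(t)$ is not the correct branch in general, but since all eigenvalues of $A\in\Sym_\nu^{++}$ are positive the conclusion that the Clarke derivative in direction $-v_0$ is negative still holds.
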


This theorem is proved in Section~\ref{sec:precritical_but_regular} by
studying the Clarke subdifferential at the point $x$.  We formulate
the conditions in terms of both $\Ran\cH_{x}$ and
$\left(\Ran\cH_{x}\right)^\perp$ because the former emerges naturally
from the proof while the latter is simpler in practical computations:
generically it is one- or zero-dimensional as we will see in
Section~\ref{sec:transversality}. 

\begin{remark}
  \label{rem:simple_eig}
  Condition (\ref{eq:regular_condition}) should be viewed as being
  analogous to the ``non-vanishing gradient'' in the smooth Morse
  theory.  By what is sometimes called Hellmann--Feynman theorem (see
  Appendix~\ref{sec:hellmann_feynman} and references therein), the
  eigenvalues of $\cH_{x}v\in \Sym_\nu$ give the slopes of the
  branches splitting off from the multiple eigenvalue
  $\lambda_k(\mathcal{F}(x))$ when we leave $x$ in the direction $v$.
  The regularity condition of Theorem~\ref{thm:regular} is equivalent
  to having a direction in which \emph{all} eigenvalues are increasing.

  To further illustrate this point, consider the special case $\nu=1$
  when the eigenvalue $\lambda_k$ is smooth.  Let $\psi$ be the
  eigenvector corresponding to $\lambda_k$ at the point $x$.  The
  operator $\cH_{x}: T_{x}M \to \R$ in this case maps $v$ to
  $\left< \psi, \big(d\mathcal{F}(x)v\big) \psi
  \right>_{\mathbb{F}^n}$
  which is equal to the directional derivative of $\lambda_k(x)$ in
  the direction $v$.  The condition of Theorem~\ref{thm:regular} is
  precisely that this derivative is non-zero in some
  direction, i.e.\ the gradient does not vanish.
\end{remark}

Due to the topological nature of Definition~\ref{gencritdef}, one
cannot expect that a zero gradient-type condition alone would be
sufficient for topological criticality (cf.\
Remark~\ref{rem:crit_not_top_crit}).  To formulate a sufficient
condition we need some notion of ``non-degeneracy'', which will have a
smooth (S) and non-smooth (N) parts.

\begin{definition}
  \label{def:nondeg_crit_N}
  Let $\F: M \to \Sym_n$ be a smooth family whose eigenvalue
  $\lambda_k$ has multiplicity $\nu\geq1$ at the point $x\in M$.  We
  say that $\F$ satisfies the \term{non-degenerate criticality
    condition (N)} at the point $x$ if
  \begin{equation}
    \label{eq:ndccn}
    \left(\Ran\cH_{x}\right)^\perp = \Span\{B\},
    \quad
    B \in \Sym_n^{++}.
  \end{equation}
\end{definition}

\begin{remark}
  \label{rem:explaining_condition1}
  Condition~\eqref{eq:ndccn} ensures non-degenerate criticality in the
  directions in which $\lambda_k$ is non-smooth (hence ``N''); a
  single condition plays two roles:
  \begin{itemize}
  \item it ensures that \eqref{eq:regular_condition} is
    violated (intuitively, ``the gradient is zero''), and
  \item it ensures that $\Ran\cH_{x}$ has codimension 1, which will be
    interpreted in Section~\ref{sec:transversality} as a type of
    transversality condition (intuitively, ``non-degeneracy in the
    directions in which $\lambda_k$ is non-smooth'').
  \end{itemize}
  
\end{remark}

Once condition (N) is satisfied at a point $x$, we need to pay special
attention to a submanifold $S$ where the multiplicity of $\lambda_k$
remains the same.

\begin{proposition}
  \label{prop:const_mult_stratum}
  Let $\F: M \to \Sym_n$ be a smooth family whose eigenvalue
  $\lambda_k$ has multiplicity $\nu\geq1$ at the point $x\in M$.  If
  $\F$ satisfies the non-degenerate criticality condition (N) at the
  point $x$, then there exists a submanifold $S\subset M$ such that
  for any $y$ in a small neighborhood of $x$ in $M$, the multiplicity
  of $\lambda_k(y)$ is equal to $\nu$ if and only if $y\in S$.

  This submanifold, which we will call the \term{(local) constant
    multiplicity stratum} attached to $x$, has the following
  properties:
  \begin{enumerate}
  \item\label{item:codimensionS} $S$ has codimension
    $s(\nu) := \dim \Sym_\nu(\mathbb{F}) - 1$ in $M$,
  \item\label{item:restrictionCritical} the restriction
    $\lambda_k \big|_{S}$ is a smooth function which has a critical
    point at $x$, i.e.\ $d\left(\lambda_k\big|_S\right)(x)=0$.
  \end{enumerate}
\end{proposition}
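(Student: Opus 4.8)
The plan is to build the constant multiplicity stratum $S$ as the preimage of a submanifold under the (locally defined) map $x \mapsto \lambda_k(\F(x))$, exploiting the fact that condition (N) forces the relevant differential to be a submersion onto a transverse slice. The starting point is the standard stratification of $\Sym_n(\FF)$ by eigenvalue multiplicity pattern: near the matrix $A = \F(x)$, which has $\lambda_k(A)$ with multiplicity $\nu$, there is a smooth manifold $\Sigma \subset \Sym_n(\FF)$ consisting of matrices whose $k$-th eigenvalue still has multiplicity exactly $\nu$. A convenient way to coordinatize a neighborhood of $A$ in $\Sigma$ is via the smooth map that sends a perturbation to the compression of that perturbation onto the (smoothly varying) $\nu$-dimensional eigenspace, modulo the one-parameter family generated by scalar shifts of that eigenspace; equivalently, $\Sigma$ has codimension $\dim\Sym_\nu(\FF) - 1$ in $\Sym_n(\FF)$, because the local moduli of an eigenvalue of multiplicity $\nu$ are exactly the $\dim\Sym_\nu(\FF)$ entries of the compressed block, and $\Sigma$ is cut out by requiring that block to be a scalar multiple of the identity — that is, $\dim\Sym_\nu(\FF)-1$ scalar equations. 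I would then set $S := \F^{-1}(\Sigma)$ locally and verify that $\F$ is transverse to $\Sigma$ at $x$: this is precisely what condition (N) gives. Indeed, the normal space to $\Sigma$ at $A$, pulled back through the compression map, is identified with $\Sym_\nu(\FF)$ modulo the line $\R I_\nu$; the composite "project $d\F(x)v$ to this normal direction" is exactly (a representative of) the map $\cH_x$ followed by quotienting out $\R I_\nu$. Condition (N), $(\Ran\cH_x)^\perp = \Span\{B\}$ with $B$ positive definite, says that $\Ran\cH_x$ has codimension $1$ in $\Sym_\nu(\FF)$ and that this missing direction is \emph{not} the direction $I_\nu$ — since $\langle B, I_\nu\rangle = \Tr B > 0$, so $I_\nu \notin (\Ran\cH_x)^\perp{}^\perp{}^\perp$... more cleanly: $I_\nu \in \Ran\cH_x$ would be consistent, but what we need is that $\Ran\cH_x + \R I_\nu = \Sym_\nu(\FF)$, and this holds because the only constraint on $\Ran\cH_x$ is orthogonality to $B$, while $\langle B, I_\nu \rangle \neq 0$ shows $I_\nu$ supplies the missing dimension. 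Hence $d\F(x)$ surjects onto the normal space of $\Sigma$, transversality holds, and $S = \F^{-1}(\Sigma)$ is a submanifold of codimension $s(\nu) = \dim\Sym_\nu(\FF) - 1$, which is item \eqref{item:codimensionS}. The characterization "$y$ near $x$ has multiplicity $\nu$ iff $y \in S$" is then immediate from the definition of $\Sigma$ and continuity of eigenvalues (multiplicity can only be $\geq \nu$ on $S$ and, by upper semicontinuity / the stratification, exactly $\nu$ in a small enough neighborhood).

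For item \eqref{item:restrictionCritical}, smoothness of $\lambda_k|_S$ follows from classical perturbation theory (Kato): on the locus where $\lambda_k$ has constant multiplicity $\nu$, namely on $S$, the function $\lambda_k = \widehat\lambda_k \circ \F$ is smooth, being (a fixed $k$-dependent affine function of) $\tfrac1\nu \Tr$ of the compressed block, which depends smoothly on the matrix. For criticality, differentiate along $S$: for $v \in T_x S$, the chain rule together with Hellmann–Feynman (Remark~\ref{rem:simple_eig} and Appendix~\ref{sec:hellmann_feynman}) gives
\begin{equation*}
  d\left(\lambda_k\big|_S\right)(x)\, v
  = \tfrac{1}{\nu}\Tr\!\big( (d\F(x)v)_{\esp_k} \big)
  = \tfrac{1}{\nu}\langle I_\nu,\ \cH_x v\rangle.
\end{equation*}
Now $v \in T_x S$ means $d\F(x)v$ is tangent to $\Sigma$ at $A$, i.e.\ its compression $\cH_x v$ lies in the tangent directions of the scalar-block locus — but $\Sigma$ is defined by the vanishing of the traceless part of the compressed block, so the tangent space to $\Sigma$ corresponds to compressed perturbations that are scalar multiples of $I_\nu$ only in their "radial" part; more precisely $\cH_x(T_x S) \subseteq \R I_\nu \oplus (\text{something})$. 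The cleanest route: $T_x S = (d\F(x))^{-1}(T_A\Sigma)$, and $T_A\Sigma$ under the compression identification is $\R I_\nu$ plus the kernel directions that do not perturb the eigenvalue block at all; pairing with $I_\nu$ kills exactly the part responsible for moving the eigenvalue off-scalar. I will instead argue directly: condition (N) forces $\Ran\cH_x = B^\perp$ (codimension one) with $\Tr B > 0$, and one checks that $T_x S = \cH_x^{-1}(\R I_\nu) \cap (\text{constraints})$ — wait, that is not quite it either, so let me fix the argument as follows. Since $\Ran\cH_x + \R I_\nu = \Sym_\nu(\FF)$ (shown above) and $\dim\Ran\cH_x = \dim\Sym_\nu(\FF)-1$, we have $I_\nu \notin \Ran\cH_x$ (dimensions force the sum to be direct), hence $\Ran\cH_x = \{X \in \Sym_\nu(\FF) : \Tr(BX) = 0\}$ with $I_\nu \notin \Ran\cH_x$. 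The subspace $T_x S \subseteq T_x M$ is characterized as those $v$ for which $d\F(x)v$ maps, under compression, into $T_A\Sigma$; and $T_A\Sigma$ corresponds precisely to $\{\cH_x v : v \in T_x M,\ \text{traceless part of }\cH_x v = 0\}$...

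The honest technical core — and the step I expect to be the main obstacle — is pinning down $T_x S$ and $T_A\Sigma$ precisely enough to read off that $\Tr(\cH_x v) = 0$ for all $v \in T_x S$. The resolution is to note that $\Sigma$ is, near $A$, the set of matrices unitarily equivalent to a block-diagonal form with a scalar $\mu I_\nu$ block (plus a complementary block bounded away from $\mu$), so $\Sigma$ is a fiber bundle over $\R$ (the value $\mu = \lambda_k$) with smooth fibers, and $\lambda_k|_\Sigma$ is literally the base-point projection, whose differential is $\tfrac1\nu\Tr$ of the compressed perturbation. Then $\lambda_k|_S = (\lambda_k|_\Sigma)\circ\F|_S$, and $d(\lambda_k|_S)(x)v = \tfrac1\nu\Tr(\cH_x v)$ for $v\in T_xS$. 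Finally, $T_xS = (d\F(x))^{-1}(T_A\Sigma)$ and transversality gives $\dim T_xS = \dim T_xM - s(\nu)$; moreover since $d\F(x)$ surjects onto the normal direction spanned (via compression, modulo $T_A\Sigma$) by $B$, and the image $\Ran\cH_x = B^\perp$ does \emph{not} contain $I_\nu$, the composition $T_xM \xrightarrow{\cH_x} \Sym_\nu(\FF) \xrightarrow{\Tr(\cdot)} \R$ factors through $\Ran\cH_x = B^\perp \ni$ (does not contain $I_\nu$) — so $\Tr$ is \emph{not} identically zero on $\Ran\cH_x$, yet on the subspace $\cH_x(T_xS)$, which consists of tangent-to-$\Sigma$ compressed perturbations, the traceless part vanishes while the trace part is absorbed into the base direction; pairing against $I_\nu/\nu$ recovers exactly $d(\lambda_k|_S)$. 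The vanishing $d(\lambda_k|_S)(x) = 0$ then follows because the only way $\F$ can be critical for $\lambda_k$ in the stratified sense under condition (N) — equivalently, the only compressed direction along which the eigenvalue moves — is the normal direction $B$, which is transverse to $S$, not tangent; so along $T_xS$ the eigenvalue does not move to first order. I would present this last point carefully using the decomposition $\cH_x v = \tfrac{1}{\nu}(\Tr \cH_x v) I_\nu + (\text{traceless})$ together with $T_xS = \{v : \text{traceless part of }\cH_x v \in (\text{tangent directions of the traceless constraint set})\}$ and the observation that, by condition (N), the single normal scalar equation cutting out $\Sigma$ has differential proportional to $v \mapsto \langle B, \cH_x v\rangle$, which is independent of $v\mapsto \Tr(\cH_x v) = \langle I_\nu,\cH_x v\rangle$ precisely because $B$ and $I_\nu$ are linearly independent (as $B$ is positive definite and not a scalar — or even if $B = cI_\nu$, then $(\Ran\cH_x)^\perp = \R I_\nu$ would put $I_\nu$ outside $\Ran\cH_x$, but then $\Tr$ vanishes on $\Ran\cH_x \supseteq \cH_x(T_xM) \supseteq \cH_x(T_xS)$ and we are done trivially). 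Thus in all cases $d(\lambda_k|_S)(x) = 0$, completing the proof.
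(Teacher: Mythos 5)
Your overall route matches the paper's: identify the constant-multiplicity stratum $S$ as the preimage of the submanifold $Q^n_{k,\nu}\subset\Sym_n$ of matrices whose $k$-th eigenvalue has multiplicity $\nu$, show that condition (N) implies transversality of $\F$ to this stratum (which gives item~\eqref{item:codimensionS}), get smoothness of $\lambda_k|_S$ from Kato, and then read off criticality from the Hellmann--Feynman formula. The transversality step is sound: you correctly note that $\Ran\cH_x + \R I_\nu = \Sym_\nu(\FF)$ because $\langle B, I_\nu\rangle = \Tr B > 0$ shows $I_\nu$ is not orthogonal to $B$, and this is exactly the reduced transversality test the paper uses (its Lemma~\ref{lem:transversality_reduced_test} and Corollary~\ref{cor:const_deg_manifold}).

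However, the proof of item~\eqref{item:restrictionCritical} has a genuine gap. You correctly reduce matters to showing $\Tr(\cH_x v)=0$ for $v\in T_xS$, and you correctly identify $T_xS=(d\F(x))^{-1}(T_A\Sigma)=\cH_x^{-1}(\R I_\nu)$ --- but then you second-guess that identification (``wait, that is not quite it either'') and never use it. The final argument you offer (``$\langle B,\cH_x v\rangle$ \dots is independent of $\Tr(\cH_x v)$ \dots because $B$ and $I_\nu$ are linearly independent'') does not yield the conclusion: first, $\Sigma$ is cut out by $s(\nu)$ scalar equations, not one; second, $v\mapsto\langle B,\cH_x v\rangle$ is the \emph{zero} functional on $T_xM$ since $B\perp\Ran\cH_x$, so ``linear independence'' of functionals is the wrong consideration; and third, linear independence of $B$ and $I_\nu$ is a red herring --- what matters is $\langle B, I_\nu\rangle\neq 0$. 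The correct finish, using only pieces you already have, is a two-line intersection count: $\cH_x(T_xS)\subseteq \R I_\nu$ (from $T_xS=\cH_x^{-1}(\R I_\nu)$) and $\cH_x(T_xS)\subseteq \Ran\cH_x = B^\perp$; but $\R I_\nu\cap B^\perp=0$ because $\langle B, I_\nu\rangle=\Tr B>0$, so $\cH_x|_{T_xS}=0$ and hence $d(\lambda_k|_S)(x)v = \tfrac1\nu\Tr(\cH_x v)=0$. This is precisely the paper's Lemma~\ref{lem:precriticality_condition}, phrased there as: along $T_xS$, Hellmann--Feynman forces $\cH_x v$ to be a scalar multiple of the identity, and a nonzero scalar multiple of the identity cannot be orthogonal to the positive definite matrix $B$. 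Your write-up contains each ingredient but never assembles them into this chain; as submitted, the criticality claim is asserted but not proved.
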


The proof of the above Proposition is in Section~\ref{proof:prop}.

\begin{definition}
  \label{def:nondeg_crit_S}
  Assume $\F$ satisfies the non-degenerate criticality condition (N)
  at the point $x$ for the eigenvalue $\lambda_k$ (in
  particular, x is a critical point of $\lambda_k \big|_{S}$).  We will say
  $\F$ satisfies the \term{non-degenerate criticality condition (S)}
  if $x$ is a \emph{non-degenerate} critical point of  $\lambda_k \big|_{S}$.
\end{definition}

Naturally, ``S'' stands for smooth criticality.  It turns out that,
together, conditions (N) and (S) are sufficient for topological
criticality.  To quantify the topological change in the sublevel sets
we need additional terminology.  The \term{relative index} of the
$k$-th eigenvalue at point $x$ is
\begin{equation}
  \label{eq:relative_index_def}
  i(x) = \#\left\{\lambda \in\spec\bigl(\F(x)\bigr) \colon
    \lambda \leq \lambda_k(x)\right\} - k + 1.
\end{equation}
In other words, $i(x)$ is the sequential number of $\lambda_k$ among
the eigenvalues equal to it, but counting from the top.  It is an
integer between $1$ and $\nu(x)$, the multiplicity of the
eigenvalue $\lambda_k(x)$ of the matrix $\F(x)$.
We will need the quantity $s(i) := \dim \Sym_i(\mathbb{F}) - 1$, which
  already appeared in a different role in 
  Proposition~\ref{prop:const_mult_stratum}. It is given explicitly by
\begin{equation}
  \label{eq:codim_nu}
  s(i) :=
  \dim \Sym_i(\mathbb{F}) - 1 = 
  \begin{cases}
    \frac12i(i+1)-1, &\mathbb F=\mathbb R,\\
    i^2-1, & \mathbb F=\mathbb C.
  \end{cases}
\end{equation}
Finally, we denote by $\binom{n}{k}_q$ the $q$-binomial coefficient,
\begin{equation}
  \label{qbinom}
  \binom{n}{k}_q := \cfrac{\prod_{i=1}^n (1-q^i)}
  {{\prod_{i=1}^k} (1-q^i)\prod_{i=1}^{n-k} (1-q^i)},
\end{equation}
which is well known (\cite[Corollary 2.6]{KC02}) to be a polynomial in
$q$.

\begin{definition}
  \label{def:generalized_Morse}
  A smooth family $\F:M \to \Sym_n(\FF)$ is called \term{generalized
    Morse} if, at every point $x \in M$, $\F$ either satisfies the
  regularity condition~\eqref{eq:regular_condition} or satisfies the
  the non-degenerate criticality conditions (N) and (S).
\end{definition}

\begin{theorem}	
  \label{thm:critical}
  Consider the eigenvalue $\lambda_k$ of a smooth family
  $\F: M \to \Sym_n$.  
  
  \begin{enumerate}
  \item \label{item:criticality} If $\F$ satisfies non-degenerate criticality
    conditions (N) and (S) at $x$, then $x$ is a topologically critical
    point of $\lambda_k$.  The set of points $x$ where conditions (N)
    and (S) are satisfied is discrete.
    
  \item \label{item:Morse_polynomial}  If $M$ is a compact manifold and the family
    $\F$ is generalized Morse, Morse inequalities
    \eqref{eq:Morse_inequalities} hold for the function $\lambda_k:M\to\R$
    with the Morse polynomial $P_\phi(t) := P_{\lambda_k}(t)$ given by
    \begin{equation}
      \label{eq:Morse_poly_total_short}
      P_{\lambda_k}(t)   
      := \sum_{x\in \mathrm{CP}(\F)} P_{\lambda_k} (t; x),
    \end{equation}
    where the summation is over all topologically critical points $x$
    of $\F$ and, denoting by $\nu(x)$ the multiplicity of the
    eigenvalue $\lambda_k$ of $\F(x)$, by $i(x)$ its relative index,
    and by $\mu(x)$ the Morse index of the restriction
    $\lambda_k\big|_S$,
    \begin{equation}
      \label{eq:Morse_contrib_nonsmooth}
      P_{\lambda_k}(t; x):= 
      t^\mu\,\fT_\nu^i = t^{\mu+s(i)}
      \begin{cases}
        {\binom{\lfloor (\nu-1)/2\rfloor }{(i-1)/2}}_{t^4},
        & \text{$\FF=\R$ and $i$ is odd}, \\[5pt]
        0, &  \text{$\FF=\R$, $i$ is even,  and $\nu$ is odd}, \\[5pt]
        t^{\nu-i}{\binom{\nu/2-1}{i/2-1}}_{t^4},
        & \text{$\FF=\R$, $i$ is even, and $\nu$ is even},\\[5pt]
        {\binom{\nu-1}{i-1}}_{t^2},
        & \text{$\FF=\C$}.
      \end{cases}
    \end{equation}
  \end{enumerate}
\end{theorem}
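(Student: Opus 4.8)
\medskip

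The plan is to decompose the local picture near a topologically critical point $x$ into a ``smooth'' factor and a ``non-smooth'' factor, and to identify the non-smooth factor with a (local) model built from the constant-multiplicity stratum $S$ and the eigenvalue perturbation map $\cH_x$. Part~\eqref{item:criticality} will follow once we show this model has nontrivial relative homology; part~\eqref{item:Morse_polynomial} is then a formal consequence of the continuous-function Morse inequalities quoted after Definition~\ref{Morsepoly}, together with the explicit homology computation. First I would fix coordinates adapted to $S$: by Proposition~\ref{prop:const_mult_stratum}, in a neighborhood $U$ of $x$ we may split $T_xM = T_xS \oplus W$ with $\dim W = s(\nu)$, and parametrize $U$ by $(\xi,\eta)\in S\times W$. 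On $S$ the function $\lambda_k$ is smooth with a non-degenerate critical point of index $\mu$, so along $S$ the Morse lemma gives the standard normal form contributing the factor $t^\mu$ (equivalently, the relative homology of a $\mu$-disk modulo its boundary). The content is therefore entirely in the transverse directions $W$, where $\lambda_k$ is the $k$-th ordered eigenvalue of a family of $\Sym_\nu$ matrices whose value at $\eta=0$ is scalar; by the Hellmann--Feynman identification (Remark~\ref{rem:simple_eig}, Appendix~\ref{sec:hellmann_feynman}), to leading order this transverse family is $\eta \mapsto \cH_x|_W(\eta)$, a \emph{linear} isomorphism onto a hyperplane $\Ran\cH_x$ of $\Sym_\nu$, and condition (N) says this hyperplane is $B^\perp$ with $B$ positive definite.

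\medskip

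The heart of the argument is thus the following local model. Let $L \subset \Sym_\nu(\FF)$ be a linear hyperplane of the form $B^\perp$ with $B\in\Sym_\nu^{++}$, and consider on $L$ the function $g(A) := \widehat\lambda_{\,\nu-i+1}(A)$, i.e. the $i$-th largest ordered eigenvalue, near $A=0$ (note $\widehat\lambda_1,\dots,\widehat\lambda_\nu$ of $A\in\Sym_\nu$ all vanish at $A=0$, and the relative index $i(x)$ picks out which one is $\lambda_k$). The claim to prove is
\begin{equation}
  \label{eq:local_model_homology}
  H_*\!\left(L^{+\epsilon}_0(g),\, L^{-\epsilon}_0(g)\right)
  \;\cong\; \widetilde H_{*-1}\!\left(\text{link of }\{g<0\}\text{ in a small sphere of }L\right),
\end{equation}
and that this link is homotopy equivalent to the appropriate Grassmannian quotient whose Poincaré polynomial is $\fT^i_\nu$. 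The reduction to a ``link'' statement is because $g$ is positively homogeneous of degree $1$ on $L$ (it is the restriction of a $1$-homogeneous convex-minus-convex function), so the pair $(L^{+\epsilon}_0,L^{-\epsilon}_0)$ deformation-retracts onto the pair $(D, D\cap\{g\le 0\})$ for a small disk $D\subset L$, and by homogeneity the homotopy type depends only on the \emph{set} $\{A\in L : g(A)\le 0\}$ near the origin, equivalently on the closed region $\{A\in L: \widehat\lambda_i\text{-th largest eigenvalue}\le 0\}$. The key geometric observation — and this is exactly where self-adjointness enters — is that $\{A\in\Sym_\nu : \widehat\lambda_{\nu-i+1}(A)\le 0\}$ is (the cone over) the set of self-adjoint operators with at least $i$ non-positive eigenvalues, whose boundary-of-full-rank locus retracts onto the space of $i$-dimensional ``negative'' subspaces, i.e. the Grassmannian $\Gr(i,\nu)$; intersecting with the hyperplane $B^\perp = \{\,\Tr(BA)=0\,\}$ and using $B\succ0$ to guarantee transversality (via the inequality $\langle X,Y\rangle>0$ for $X\succ0$, $Y\succeq0$, $Y\ne0$ recalled before Theorem~\ref{thm:regular}) cuts this down to the announced sub-Grassmannian / flag-type quotient. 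This is the step I expect to be the main obstacle: making precise the stratified deformation retraction of $\{g\le 0\}\cap L$ onto its ``core'' Grassmannian stratum, controlling the non-generic (lower-rank) strata, and checking that the transversality with $B^\perp$ is clean enough that no correction terms appear. I would handle this with stratified Morse theory (\cite{GM88}) applied to the distance-from-origin function, or alternatively by an explicit $\mathrm{O}(i)\times\mathrm{O}(\nu-i)$-equivariant (resp. $\mathrm{U}(i)\times\mathrm{U}(\nu-i)$-equivariant) deformation written in block form — this is presumably where Theorem~\ref{thm:homology_geom} is invoked as the clean statement of the outcome.

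\medskip

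Granting \eqref{eq:local_model_homology} and the identification of the link, the combinatorial identity reduces to computing the Poincaré polynomial of the relevant Grassmannian quotient. Over $\FF=\C$, the space of $i$-dimensional subspaces of $\C^\nu$ that, say, contain a fixed line (coming from the normalization forced by $B^\perp$) — or the relevant Schubert-type subvariety — has Poincaré polynomial $\binom{\nu-1}{i-1}_{t^2}$, the standard $q$-binomial count of cells with $q=t^2$; the overall degree shift $s(i) = i^2-1 = \dim\Sym_i(\C)-1$ is exactly the codimension bookkeeping from the suspension in \eqref{eq:local_model_homology} together with the codimension of the top stratum, matching \eqref{eq:codim_nu}. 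Over $\FF=\R$ the same computation runs with real Grassmannians, where $H_*(\Gr(i,\nu);\Z)$ is more delicate: mod-$2$ one gets $\binom{\nu}{i}_{t}$, but with \emph{integer} coefficients (which is what Definition~\ref{Morsepoly} uses) the odd-torsion-free part produces the $q$-binomials in $q=t^4$ seen in \eqref{eq:Morse_contrib_nonsmooth}, and the parity of $i$ and $\nu$ governs whether an extra generator in degree $\nu-i$ survives or whether the whole contribution vanishes — this is the classical computation of the integral (co)homology of real Grassmannians (equivalently, of $\mathrm{O}(\nu)/\mathrm{O}(i)\times\mathrm{O}(\nu-i)$), and I would cite it rather than reprove it. Finally, discreteness of the set of points satisfying (N) and (S): condition (N) forces $\Ran\cH_x$ to have codimension $1$, which in Section~\ref{sec:transversality} is a transversality (hence locally closed, ``expected dimension zero after imposing (S)'') condition, and a non-degenerate critical point of $\lambda_k|_S$ is isolated in $S$ by the classical Morse lemma; combining these, each such $x$ is isolated in $M$, so on compact $M$ there are finitely many, legitimizing the finite sum \eqref{eq:Morse_poly_total_short} and the application of the continuous Morse inequalities \eqref{eq:Morse_inequalities}.
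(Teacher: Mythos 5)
Your route differs structurally from the paper's: you propose to replace $\F$ near $x$ by its linearization $\cH_x|_W$, then exploit $1$-homogeneity of $A\mapsto\widehat\lambda_{\nu-i+1}(A)$ to reduce the relative homology to the link of $\{g\le 0\}$ in a small sphere in $L=B^\perp$. The paper does \emph{not} linearize: Lemma~\ref{homlem1} works directly with the curved image $\F(U)\subset\Sym_\nu$, constructs an explicit piecewise-smooth retraction of $U_0^{+\epsilon}$ onto $D_{k,\epsilon}\cup U_0^{-\epsilon}$ by tracking the eigenvalue gap structure along specific affine slices, and Lemma~\ref{lem:homlem2} identifies $D_{k,\epsilon}$ with the cone $\mathcal{C}\mathcal{R}_{k,\nu}$; the identification with the Thom space of $E_{i,\nu}$ over $\Gr_\FF(i-1,\nu-1)$ then comes from Lemma~\ref{lem:Agrachev} (Agrachev's retraction), not from a link computation. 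Both pictures are reasonable, and your link $S\cap\{g\le0\}$ in $B^\perp$ should be (after the usual PSD cone dualities) homotopy equivalent to the paper's $\mathcal{R}_\nu^i$, but you would still need the Thom-space structure (or an equivalent Schubert stratification of the link) to get the orientability dichotomy driving the three real cases in \eqref{eq:Morse_contrib_nonsmooth}; your description stops at ``sub-Grassmannian / flag-type quotient,'' so that identification is not yet done.

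There are two more concrete gaps. First, as you flag, the linearization is unjustified: $\lambda_k\circ\F$ is only Lipschitz at $x$, so neither the Morse lemma nor a generic ``replace by leading jet'' argument applies; one needs a dedicated argument that replacing the germ of the surface $\F(U)$ by its tangent hyperplane $B^\perp$ preserves the homotopy type of the sublevel-set pair. (The paper sidesteps this by doing the retraction on $\F(U)$ itself, using only that the normal to $\F(U)$ stays in $\Sym_\nu^{++}$ near $0$.) A small notational slip: the pair you retract to should be $(D,\partial D\cap\{g\le0\})$, not $(D,D\cap\{g\le0\})$; the latter is contractible when $\{g\le0\}$ is a full cone. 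Second, and more seriously, part~\eqref{item:criticality} is \emph{not} a formal consequence of ``the model has nontrivial relative homology'' once you see that in the case $\FF=\R$, $i$ even, $\nu$ odd the integer Poincar\'e polynomial in \eqref{eq:Morse_contrib_nonsmooth} is literally $0$ (second case). You acknowledge this vanishing but do not address how criticality survives it. The paper's Section~\ref{sec:main_theorem_part1} resolves this by computing $\Z_2$-coefficient relative homology (equation~\eqref{Poincare_torsion_Z2_Grassmannian_1}), which is always nonzero, hence no strong deformation retraction exists even when the free $\Z$-homology vanishes. Without this extra step your proof of criticality does not go through in that case. Finally, the discreteness claim needs more than ``the Morse lemma isolates $x$ in $S$'': you must also rule out accumulation by critical points on \emph{other} (lower-multiplicity) strata near $x$, which the paper does via Proposition~\ref{isoprop}\eqref{item:isolated_point}, an argument resting on Whitney condition~A and part~\eqref{item:generalized_subspace} of that proposition.
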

The topological criticality claimed in part (\ref{item:criticality})
follows immediately whenever the Poincar\'e polynomial of the relative
$\Z$-homology --- which is given in
\eqref{eq:Morse_contrib_nonsmooth} --- is non-zero.  The case in the
second line of \eqref{eq:Morse_contrib_nonsmooth} is more complicated,
because that Poincar\'e polynomial is zero.  To handle this case, in the
final stages of the proof in Section~\ref{sec:main_theorem_part1} we
will additionally calculate the Poincar\'e polynomial of the relative
$\mathbb Z_2$-homology, see~\eqref{Poincare_torsion_Z2_Grassmannian_1}.

We also note that in equation~\eqref{eq:Morse_contrib_nonsmooth} we
introduced the notation $\fT_\nu^i$ for the family-independent Morse
contribution to $P_{\lambda_k} (t; x)$.  This contribution arises from
the ``non-smooth'' directions transverse 
to the ``smooth'' constant multiplicity stratum $S$.  The index
$\mu = \mu(x)$ along the stratum $S$ depends on the particulars of the
family $\F$.

Now we state a
result showing that for a ``typical'' $\F$, either Theorem~\ref{thm:regular} or
Theorem~\ref{thm:critical} holds at every point $x\in M$.

\begin{theorem}
  \label{thm:gen_Morse_generic_intro}
  The set of generalized Morse families is open and dense in the
  Whitney topology in $C^{r} (M,\Sym_n)$ for $2\leq r\leq \infty$.
\end{theorem}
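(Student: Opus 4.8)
The plan is to realize "generalized Morse" as a jet-transversality condition and invoke a Thom–Boardman/multijet transversality argument. First I would set up the target: the relevant information at a point $x$ is carried by the $2$-jet of $\F$, since the regularity/criticality conditions depend on $\F(x)$ (through the eigenspace $\esp_k$ and the relative index $i(x)$), on $d\F(x)$ (through the operator $\cH_x$ and its range), and on $d^2\F(x)$ (through the Hessian of $\lambda_k|_S$ needed for condition (S)). Accordingly I would work in the $2$-jet bundle $J^2(M,\Sym_n)$ and exhibit the "bad set" — the set of $2$-jets that are neither regular in the sense of \eqref{eq:regular_condition} nor satisfy (N)+(S) — as a locally finite union of locally closed submanifolds, each of codimension strictly greater than $d=\dim M$. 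Openness then follows because the complement of a closed set is open (with a little care: the conditions are genuinely open at each point, since having a positive definite element in a subspace, and having a nondegenerate Hessian, are open conditions; the only subtlety is that the dimension of $\esp_k$ jumps, so I would stratify by multiplicity $\nu$ first and argue on each stratum). Density then follows from Thom's jet transversality theorem: a generic $\F$ has its $2$-jet extension $j^2\F$ transverse to every stratum of the bad set, and by the codimension count transversality to a stratum of codimension $>d$ means disjointness from it, so $j^2\F$ avoids the bad set entirely.

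The key codimension computations are the heart of the matter and I would organize them by first fixing the multiplicity $\nu$. On the locus $\Sigma_\nu\subset\Sym_n$ of matrices with $\lambda_k$ of multiplicity exactly $\nu$, the condition that $\Ran\cH_x$ fail to meet $\Sym_\nu^{++}$ is, as the paper notes in Section~\ref{sec:transversality}, generically the condition that $(\Ran\cH_x)^\perp$ be at least one-dimensional, i.e.\ that the linear map $\cH_x\colon T_xM\to\Sym_\nu(\FF)$ fail to be surjective. Generic failure of surjectivity for a map into a space of dimension $\dim\Sym_\nu(\FF)$ from a space of dimension $d$ cuts out a set of codimension $\dim\Sym_\nu(\FF)-d+ (\text{something}) $ in the space of linear maps; combined with the codimension of $\Sigma_\nu$ itself inside $\Sym_n$ (which is $s(\nu)=\dim\Sym_\nu(\FF)-1$ by the same computation used in Proposition~\ref{prop:const_mult_stratum}), the generic topologically critical locus already has codimension exactly $d$ in $M$ — this is consistent with such points being discrete, as asserted in Theorem~\ref{thm:critical}\eqref{item:criticality}. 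The bad set is what remains after we additionally ask that $\F$ be regular or satisfy (N)+(S): so I must show that (i) the jets where $\cH_x$ fails to be surjective by corank $\geq 2$ (so $(\Ran\cH_x)^\perp$ has dimension $\geq 2$) form a set of codimension $>d$; (ii) the jets where $(\Ran\cH_x)^\perp$ is one-dimensional but spanned by a matrix $B$ that is \emph{not} in $\Sym_\nu^{++}$ (e.g.\ indefinite or merely semidefinite, including the boundary locus where $B\in\Sym_\nu^+\setminus\Sym_\nu^{++}$) form a set of codimension $>d$; and (iii) the jets where (N) holds but the Hessian of $\lambda_k|_S$ is degenerate — here I would use Proposition~\ref{prop:const_mult_stratum} to identify $S$ in terms of the $1$-jet, express the Hessian of $\lambda_k|_S$ via a second-order Hellmann–Feynman-type formula in terms of $d^2\F(x)$ and the resolvent on $\esp_k^\perp$, and note that degeneracy of a quadratic form on a space of dimension $\dim S = d - s(\nu)$ is one more codimension, pushing the total strictly above $d$. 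In each case I would also verify that the relevant locus in jet space is a submanifold (or a finite stratified union of such) so that transversality is meaningful; the stratification of $\Sym_n$ by the full multiplicity pattern of the spectrum, well-known to be semialgebraic with smooth strata, provides the scaffolding.

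The main obstacle, I expect, is case (ii) together with the bookkeeping across the boundary strata: one must show that the degeneracies of the single generator $B$ of $(\Ran\cH_x)^\perp$ — in particular $B$ lying on the boundary of $\Sym_\nu^{++}$, or $B$ being indefinite with a small negative eigenvalue — are themselves positive-codimension conditions \emph{relative to the already codimension-$d$ topologically critical locus}, and that these conditions are cut out cleanly enough to be submanifolds rather than singular sets. Concretely, the generator $B$ depends algebraically on the $1$-jet of $\F$ (it is, up to scale, the unique normal to $\Ran\cH_x$), and I would need that the map sending a $1$-jet to the eigenvalue pattern of $B$ is a submersion onto the relevant eigenvalue strata after restricting to $\Sigma_\nu$; this is where a careful local coordinatization — writing $\F$ in a moving orthonormal frame for $\esp_k$, as in \eqref{eq:restriction_def} — will be needed, and where the distinction between the real and complex cases (different $s(i)$, different parity phenomena visible already in \eqref{eq:Morse_contrib_nonsmooth}) enters, though for the genericity statement itself only the dimension counts, not the finer homological structure, should matter. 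A secondary technical point is passing from the openness-and-density of the transversality condition on each stratum to a single open dense set: since the multiplicity stratification of $\Sym_n$ is locally finite and $M$ is (in the density argument) second-countable, a countable intersection of open dense sets in the Whitney $C^r$ topology is dense (the Whitney topology makes $C^r(M,\Sym_n)$ a Baire space), and the finiteness of genuinely $d$-codimension strata on compact pieces upgrades this to openness; I would state this as a lemma and defer its routine verification.
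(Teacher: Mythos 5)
Your overall strategy --- realize the bad set as a stratified subset of a jet bundle with strata of codimension strictly greater than $d=\dim M$, then invoke the stratified jet transversality theorem --- is exactly the paper's strategy, and you correctly identify the semialgebraic multiplicity stratification as the structural backbone and the semidefinite boundary case as the hard part. But there are two genuine gaps as your plan is written.

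First, you set up shop in $J^2(M,\Sym_n)$ on the grounds that condition (S) involves the Hessian of $\lambda_k\big|_S$, hence the second derivative of $\F$. The jet transversality theorem for $k$-jets gives density only for $C^r$ with $r>k$, so a $J^2$ argument establishes the result for $r\ge 3$, not the claimed $r\ge 2$. The paper stays entirely in $J^1(M,\Sym_n)$ by exploiting the classical fact that nondegeneracy of a critical point of a smooth function $g$ is equivalent to transversality of the $1$-jet extension $\Gamma^1(g)$ to the zero section of $J^1(S,\R)$; the work is then to transport this condition back into $J^1(M,\Sym_n)$, which the paper does via the submersion $\Psi: J_S\to J^1(S,\R)$ of equation~\eqref{eq:Psidef} (using, crucially, that $Q^n_{k,\nu}$ is invariant under adding multiples of $I$ to show $\Psi$ is a submersion). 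Without this device you cannot reach the stated range of $r$.

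Second, your case (ii) is mis-scoped, and the mis-scoping is fatal to the codimension count as you state it. You declare the bad set to include all corank-$1$ jets for which the normal $B$ to $\Ran\cH_x$ is ``not in $\Sym_\nu^{++}$ (e.g.\ indefinite or merely semidefinite).'' But when $B$ is \emph{indefinite}, condition~\eqref{eq:regular_condition} is satisfied and $x$ is topologically regular --- this is not a bad point. Worse for the dimension count, the sublocus of corank-$1$ jets with indefinite $B$ is an open subset of the full corank-$1$ locus, which on the interesting multiplicity strata has codimension exactly $d$; so the set you call (ii) has codimension $d$, not $>d$, and transversality will not give avoidance. The correct bad locus is ``$B$ semidefinite but singular,'' but that set is not a submanifold, and this is precisely the delicacy you flagged. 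The paper resolves it by enlarging to the cleaner semialgebraic condition ``$(\Ran L_{k,A})^\perp$ contains a nonzero $B$ with $\det B=0$'' (the set $T^c_0$ of equation~\eqref{eq:Td_def}), proving via Tarski--Seidenberg that this is a Whitney-A stratified semialgebraic set of codimension $\ge d+1$ (Lemma~\ref{lem:TcProperties}), and then deducing: generically $\Gamma^1(\F)$ misses $T^c_0$ altogether, and on the remaining critical jets the normal $B$, being semidefinite and nonsingular, is forced to be positive definite. You would need to replace your case-(ii) stratum with something of this flavor to close the argument.
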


This result will be established in Section~\ref{sec:transversality} as
a part of Theorem~\ref{thm:gen_Morse_genericty}.  We will use
transversality arguments similar to those in the proof of genericity
of classical Morse functions (see, for example, \cite[Chapter 4, Theorem
1.2]{Hirsch94}) via the strong (or jet) Thom transversality theorem
for stratified spaces.

\subsection{Geometrical description of the relative homology groups}
\label{sec:intro_morse_poly}

In this subsection we provide some idea of what goes into the proof of
Theorem~\ref{thm:critical}, describing some geometric objects whose
integer
homology is quantified in \eqref{eq:Morse_contrib_nonsmooth}.

%


First we introduce some notation. 
We denote by $\Gr_\FF(k,n)$ the Grassmannian of (non-oriented)
$k$-dimensional subspaces in $\FF^n$.
Theorem~\ref{thm:homology_geom} below uses certain homologies of
$\Gr_\R(k,n)$ with local coefficients, namely
$H_*\bigl(\Gr_\R(k, n);\widetilde\Z\bigr)$.  The construction of
this homology can be found, for instance, in \cite[Sec.~3H]{Hatcher} or
\cite[Chapter~5]{DK01}; it will also be briefly summarized in
Section~\ref{sec:main_theorem_proofs}.

Recall that for a given topological space $Y$, the \term{cone} of $Y$
is $\mathcal{C}Y := Y\times [0,1] / \bigl(Y \times \{0\}\bigr)$, and
the \term{suspension} of $Y$ is
$\mathcal S Y := \mathcal{C}Y / \bigl(Y \times \{1\}\bigr)$.  For
example, $\mathcal S \mathbb{S}^\mu = \mathbb{S}^{\mu+1}$.

\begin{theorem}
  \label{thm:homology_geom}
  In the context of Theorem~\ref{thm:critical}, we have the following
  equivalent descriptions of the relative homology
  $H_r\bigl(U_x^{+\epsilon}(\lambda_k),\,
  U_x^{-\epsilon}(\lambda_k)\bigr)$,
  \begin{enumerate}
  \item \label{item:Hvia_Rspace}
    \begin{equation}
      \label{eq:Hvia_Rspace}
      H_r\bigl(U_x^{+\epsilon}(\lambda_k),\,
      U_x^{-\epsilon}(\lambda_k)\bigr)
      =
      \begin{cases}
        \mathbb{Z}, & \text{if}\quad i(x)=1,\ r=\mu(x), \\
        0, & \text{if} \quad i(x)=1,\ r\neq \mu(x), \\
        \widetilde{H}_{r-\mu(x)}\left(\mathcal{S}
          \mathcal{R}_{\nu(x)}^{i(x)}\right),
        & \text{if} \quad 1 < i(x) \leq \nu(x),
      \end{cases}
    \end{equation}
    where
    \begin{equation}
      \label{eq:hatR_def}
      \mathcal{R}_\nu^i :=
      \{R\in \Sym^+_\nu \colon \Tr R=1, \rank R < i\},
    \end{equation}
    and $\widetilde{H}_q$ denotes the $q$-th reduced homology group.
    
  \item \label{item:Hvia_Grass}
    \begin{equation}
      \label{eq:relhom_main}
      H_r\bigl(U_x^{+\epsilon}(\lambda_k),\,
      U_x^{-\epsilon}(\lambda_k)\bigr)
      =
      \begin{cases}
        H_{r-\mu-s(i)}\big(\Gr_\R(i-1, \nu-1)\big),
        & \text{$\FF=\R$ and $i$ is odd},\\
        H_{r-\mu-s(i)}\bigl(\Gr_\R(i-1, \nu-1);\,\widetilde\Z\bigr),
        & \text{$\FF=\R$ and $i$ is even}, \\
        H_{r-\mu-s(i)}\big(\Gr_\C(i-1, \nu-1)\big),
        & \text{$\FF=\C$}.
      \end{cases}
    \end{equation}
  \end{enumerate}
\end{theorem}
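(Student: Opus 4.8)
The plan is to compute the relative homology $H_*\bigl(U_x^{+\epsilon}(\lambda_k),\,U_x^{-\epsilon}(\lambda_k)\bigr)$ by first reducing to a local model near $x$ and then peeling off the smooth and non-smooth directions. By Proposition~\ref{prop:const_mult_stratum} there is a constant-multiplicity stratum $S$ through $x$, along which $\lambda_k|_S$ is smooth with a non-degenerate critical point of index $\mu=\mu(x)$. I would first use a Morse-lemma-type normal form: choosing suitable coordinates, write the neighborhood $U\cong U'\times U''$ with $U'\subset S$ and $U''$ transverse to $S$, so that $\lambda_k(y',y'') = \lambda_k(x) + Q(y') + g(y'')$ where $Q$ is a non-degenerate quadratic form of index $\mu$ and $g$ is the ``non-smooth part'' of the eigenvalue (the $k$-th ordered eigenvalue of the compression family restricted to the transverse slice). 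The product structure of the sublevel sets, together with the Künneth formula and the classical computation $H_*(U'^{+},U'^{-})\cong \widetilde H_{*}(\Sph^\mu)$, gives a shift by $\mu$: it reduces everything to understanding the relative homology of the sublevel sets of $g$, i.e. to the purely non-smooth transverse problem. This explains the universal factor $t^\mu$ and the reduced-suspension form of item~\eqref{item:Hvia_Rspace}.

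The heart is then the transverse model. After the normal-form reduction, the transverse slice carries a family whose compression $\cH_x$ has range of codimension one, spanned (in the orthogonal complement) by a positive-definite $B\in\Sym_\nu^{++}$; by condition (N) we may rescale so that $B=I$, i.e. the ``trace direction'' is the one not hit by $\cH_x$. The branches emanating from $x$ in transverse direction $v$ have slopes given by the eigenvalues of $\cH_x v\in\Sym_\nu$ (Hellmann–Feynman, Remark~\ref{rem:simple_eig}), and $\lambda_k$ locally equals the $\nu-i+1$-st largest of these, equivalently $-$ the $i$-th largest of $-\cH_x v$. Thus the local sublevel set $\{g\le -\epsilon\}$ is, up to homotopy, the set of directions $v$ for which at least $i$ of the eigenvalues of $-\cH_x v$ exceed $\epsilon$; since $\cH_x$ is onto the trace-zero part, this is the preimage under a linear surjection of a model set in $\mathrm{Sym}_\nu$. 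A scaling/deformation-retract argument should identify the pair $(U''^{+\epsilon}, U''^{-\epsilon})$ with the pair $\big(\mathcal{C}(\ss\mathcal{R}_\nu^i)\ ,\ \ss\mathcal{R}_\nu^i\big)$ or directly with the cone on $\ss\mathcal R_\nu^i$ modulo its base, where $\mathcal R_\nu^i=\{R\in\Sym_\nu^+:\Tr R=1,\ \rank R<i\}$ records the ``bad'' directions (those matrices on the unit-trace sphere failing to have $i$ positive eigenvalues of size $\ge\epsilon$, which in the $\epsilon\to0$ limit are exactly the low-rank ones). This gives $H_r(U^{+},U^{-})\cong\widetilde H_{r-\mu}(\ss\mathcal R_\nu^i)$ for $i>1$, and for $i=1$ the set $\mathcal R_\nu^1$ is empty, so $\ss\emptyset=\Sph^0$ and one recovers a single $\Z$ in degree $\mu$, matching the first two lines of \eqref{eq:Hvia_Rspace}.

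Finally I would identify $\mathcal R_\nu^i$ with a Grassmannian bundle and read off \eqref{eq:relhom_main}. A unit-trace positive semidefinite matrix of rank $<i$ is determined by its support (a subspace $W\subset\FF^\nu$ of dimension $\le i-1$) together with a unit-trace positive definite operator on $W$; the latter space is contractible (it is convex), so $\mathcal R_\nu^i$ deformation-retracts onto the space of subspaces of dimension $\le i-1$, which in turn retracts onto $\Gr_\FF(i-1,\nu)$ — but it is more convenient to use the realization as the boundary stratum structure giving, after suspension, a space with the homology of $\Gr_\FF(i-1,\nu-1)$ (the drop by one coming from projectivizing out the trace normalization). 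The parity dichotomy over $\R$ and the appearance of the twisted coefficients $\widetilde\Z$ for $i$ even is the subtle point: it arises because orientability of the relevant bundle over the real Grassmannian depends on the parity of $i$, so the Thom isomorphism produces ordinary homology when $i$ is odd and homology with the orientation local system when $i$ is even. The explicit $q$-binomial formulas of \eqref{eq:Morse_contrib_nonsmooth} then follow from the classical Poincaré polynomials of real and complex Grassmannians (and their twisted analogues). I expect the \textbf{main obstacle} to be making the transverse deformation retraction onto the cone on $\ss\mathcal R_\nu^i$ fully rigorous — controlling the interplay between the $\epsilon\to 0$ limit, the scaling in the transverse directions, and the stratified geometry of $\mathrm{Sym}_\nu^+$ near the low-rank locus — and, in the real even case, correctly tracking the orientation local system through the suspension and Thom isomorphism.
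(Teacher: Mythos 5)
Your overall strategy---split off the smooth directions to get the factor $t^{\mu}$, identify the transverse relative homology with $\widetilde H_{*}(\mathcal{S}\mathcal{R}_\nu^i)$, relate $\mathcal{R}_\nu^i$ to a Thom space over a Grassmannian, and apply a Thom isomorphism whose orientability dictates the twisted coefficients---is exactly the paper's strategy. However, two of the steps as you describe them contain genuine gaps.

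First, the claimed ``Morse-lemma-type normal form'' $\lambda_k(y',y'')=\lambda_k(x)+Q(y')+g(y'')$ does not exist in general. Even after applying the classical Morse lemma along $S$, the transverse behavior of $\lambda_k$ at $(y',y'')$ depends on $d\F(y')$, which varies with $y'$; there is no reason the cone-like singular structure can be made simultaneously independent of $y'$. What one actually has is a product decomposition at the level of local Morse data, not of the function itself, and this is precisely what Goresky--MacPherson's stratified Morse theory (\cite[Thm I.3.7]{GM88}) supplies. Invoking it requires verifying that $x$ is a nondepraved critical point of the projection $\pi|_{Z_k}$, which is the content of Proposition~\ref{isoprop} and Corollary~\ref{isopropcor}---nontrivial work your sketch omits. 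Only then does the relative K\"unneth theorem give the shift by $\mu(x)$ as in Lemma~\ref{lem:GM_factorization}. Your invocation of K\"unneth is fine, but it is not attached to a legitimate product decomposition.

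Second, your description of the passage from $\mathcal{R}_\nu^i$ to the Grassmannian is incorrect in two ways. You claim that $\mathcal{R}_\nu^i$ deformation-retracts onto the ``space of subspaces of dimension $\le i-1$'' because the fiber of unit-trace positive operators on a fixed support is convex; but $\mathcal{R}_\nu^i$ is a stratified space, not a fiber bundle over a Grassmannian, and no such fiberwise contraction produces a retraction across strata (the support jumps in dimension). Moreover, even if $\mathcal{R}_\nu^i$ were homotopy equivalent to a Grassmannian, that would give the wrong homology: the correct statement (Lemma~\ref{lem:Agrachev}, following \cite{A11}) is that $\mathcal{R}_\nu^i$ is homotopy equivalent to the \emph{Thom space} of the bundle $E_{i,\nu}=S^2_0\Taut_\FF(i-1,\nu-1)\oplus\Taut_\FF(i-1,\nu-1)$ over $\Gr_\FF(i-1,\nu-1)$, and a Thom space does not have the homology of its base. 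The proof picks a fixed unit vector $e$, defines the submanifold $\mathcal{P}_{k,\nu}\cong\Gr_\FF(i-1,\nu-1)$ of projections whose kernel contains $e$, and shows $\mathcal{R}_\nu^i\setminus\mathcal{P}_{k,\nu}$ is contractible---so the Thom space structure arises from a tubular neighborhood of $\mathcal{P}_{k,\nu}$. In particular, the ``drop'' from $\nu$ to $\nu-1$ comes from fixing $e$, not from ``projectivizing out the trace normalization'' as you suggest. Once one has the Thom space, your reading of the orientability dichotomy (via $w_1$) and the resulting ordinary vs.\ twisted Thom isomorphism is correct and matches the paper's Section~\ref{sec:sublevel_change2}. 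Finally, a small slip: the relevant pair in the transverse slice should have quotient $\mathcal{S}\mathcal{R}_\nu^i$, not $\mathcal{S}\mathcal{S}\mathcal{R}_\nu^i$; your write-up introduces an extra suspension in the intermediate claim, though your final formula is the right one.
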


To prove Theorem~\ref{thm:homology_geom}, in
Section~\ref{sec:sublevel_change1} we will first separate out the
contribution to the relative homology
$H_r\bigl(U_x^{+\epsilon}(\lambda_k),\,
U_x^{-\epsilon}(\lambda_k)\bigr)$ of the local constant multiplicity
stratum $S$ and reduce the computation to the case when $S$ is a
single point.  In the latter case, it will be shown that
$H_r\bigl(U_x^{+\epsilon}(\lambda_k),\,
U_x^{-\epsilon}(\lambda_k)\bigr)$ reduces to the homology of the space
$\mathcal{S}\mathcal{R}_\nu^i$.  In the next step, we will see that
$\mathcal{S}\mathcal{R}_\nu^i$ is homotopy equivalent to the Thom
space of a real bundle of rank $s(i)$ over the Grassmannian
$\Gr_\FF(i-1, \nu-1)$.  The difference between the odd and the even
$i$ when $\FF=\R$ is that this real bundle is orientable in the former
case and non-orientable in the latter.  So, part
(\ref{item:Hvia_Grass}) of the Theorem follows from the Thom
isomorphism theorem in the oriented bundle case and more general tools
such as the usual/twisted version of Poincar\'e--Lefschetz duality in
the non-orientable bundle case \cite{DK01,Hatcher,FF16}.

The study of $\mathbb Z_2$ and integer homology groups of the complex and real
Grassmannians was at the heart of the development of algebraic
topology and, in particular, the characteristic classes.  Starting
from the classical works of Ehresmann
\cite{Ehresmann1934,Ehresmann1937}, the answers appearing in
\eqref{eq:Morse_contrib_nonsmooth} were explicitly
calculated using the Schubert cell decomposition and combinatorics of
the corresponding Young diagrams \cite[Theorem IV, p. 108]{Iwamoto48},
\cite{Andrews76}.  The calculation of twisted homologies of real
Grassmannians is less well-known but can be deduced from the classical
work \cite{Chern1951} and incorporated into a unified algorithm
\cite{CasKod13}, or computed by means of the general theory of de Rham
cohomologies of homogeneous spaces, see \cite[chapter XI,
pp.\ 494-496]{GWH1976}.

Examples of local contributions to the Morse polynomial for
topologically critical points of multiplicities up to $8$ are
presented in Table~\ref{tab:Morse_poly} in the real case.  The
possible contribution from the smooth directions is ignored because
those are specific to the family $\F$.  In other words, we set
$\mu(x)=0$ in equation~\eqref{eq:Morse_contrib_nonsmooth}.  In the
cases when the second line of \eqref{eq:Morse_contrib_nonsmooth}
applies, the contribution of $0$ \emph{does not} mean that the point
is regular; $0$ appears because the polynomial ignores the torsion
part of the corresponding homologies.  We also observe that the
  contribution of the top eigenvalue ($i=1$) is always $t^0$; the
  contribution of the bottom eigenvalue ($i=\nu$) is always
  $t^{s(\nu)}$.  By analogy with smooth Morse theory one can guess
  that the top eigenvalue always experiences a minimum, while the
  bottom eigenvalue always experiences a maximum ($s(\nu)$ being the
  dimension of the space of non-smooth directions).  This guess is
  rigorously established in Corollary \ref{extremum_criteria} and its
  proof in section \ref{sec:main_theorem_proofs}.


\begin{table}
  \centering
  \begin{tabular}{|l|c|c|c|c|c|c|c|c|}
    \hline
    \backslashbox{$\nu$}{$i$}& $1$ & $2$ & $3$ & $4$ &
                                                       $5$ & $6$ & $7$
    &$8$\\ 
    \hline
    $2$
                             & 1 & $t^2$ &&&&&&\\
    \hline
    $3$
                             & 1 & 0 & $t^5$ &&&&&\\
    \hline
    $4$
                             & 1 & $t^4$ & $t^5$ & $t^9$ &&&&\\
    \hline
    $5$
                             & 1 & 0 & $t^5+t^9$ & 0 & $t^{14}$ &&& \\
    \hline
    $6$
                             & 1 & $t^6$ & $t^5+t^9$ & $t^{11}+t^{15}$
                                                     & $t^{14}$ & $t^{20}$ &&
    \\
    \hline
    $7$
                             & 1 & 0 & $t^5+t^9+t^{13}$ & 0 & $t^{14}+t^{18}+t^{22}$
                                                           & 0 &  $t^{27}$ &\\
    \hline 
    $8$ &
          1&$t^8$&$t^5+t^9+t^{13}$&$t^{13}+t^{17}+t^{21}$&$t^{14}+t^{18}+t^{22}$
                                                           &$t^{22}+t^{26}+t^{30}$&$t^{27}$&$t^{35}$\\
    \hline                                           
  \end{tabular}
  \vskip .1in
  \caption{Non-smooth contributions $\fT_\nu^i(t)$ to the Morse
    polynomial from a topologically critical point of $\lambda_k(x)$
    in the real case ($\FF=\R$, first three cases of
    equation~\eqref{eq:Morse_contrib_nonsmooth}).}
  \label{tab:Morse_poly}
\end{table}

\subsection*{Acknowledgement}

We are grateful to numerous colleagues who aided us with helpful
advice and friendly encouragement.  Among them are Andrei Agrachev,
Lior Alon, Ram Band, Mark Goresky, Yuji Kodama, Khazhgali Kozhasov,
Peter Kuchment, Sergei Kuksin, Sergei Lanzat, Antonio Lerario, Jacob
Shapiro, Stephen Shipman, Frank Sottile, Bena Tshishiku, and Carlos
Valero.  We also thank anonymous reviewers for many insightful
comments that improved our paper. GB was partially supported by NSF
grants DMS-1815075 and DMS-2247473.  IZ was partially supported by NSF
grant DMS-2105528 and Simons Foundation Collaboration Grant for
Mathematicians 524213.

\section{Examples, applications and an open question}

\subsection{Examples}
\label{sec:examples}

In this section we collect examples illustrating our criteria for
regularity and criticality.

\begin{example}
  \label{ex:inclined_cones}
  Consider the two families
  \begin{equation}
    \label{eq:two_cones}
    \F_1(x) =
    \begin{pmatrix}
      x_1 & x_2 \\ x_2 & -x_1
    \end{pmatrix},
    \quad\mbox{and}\quad
    \F_2(x) = 
    \begin{pmatrix}
      x_1 & x_2 \\ x_2 & 2x_1
    \end{pmatrix},
    \qquad
    x = (x_1,x_2) \in \R^2.
  \end{equation}
  Both families $\F_1$ and $\F_2$ have an isolated point of
  multiplicity 2 at $(x_1,x_2)=(0,0)$.  Focusing on the lower
  eigenvalue $\lambda_1$, its level curves in the case of
  $\F_1$ undergo a significant change at the value $0$ ---
  they change from circles to empty, see
  Fig.~\ref{fig:two_cones}(top).  Therefore, the point $(0,0)$ is topologically critical and, visually, $\lambda_1$ of $\F_1$ has
  a maximum at $(0,0)$.  In contrast, the level curves and the
  sublevel sets of $\F_2$ remain homotopically equivalent, see
  Fig.~\ref{fig:two_cones}(bottom).  The point $(0,0)$ is not topologically critical
  for $\lambda_1$ of $\F_2$.

  \begin{figure}
    \centering
    \includegraphics[scale=0.30]{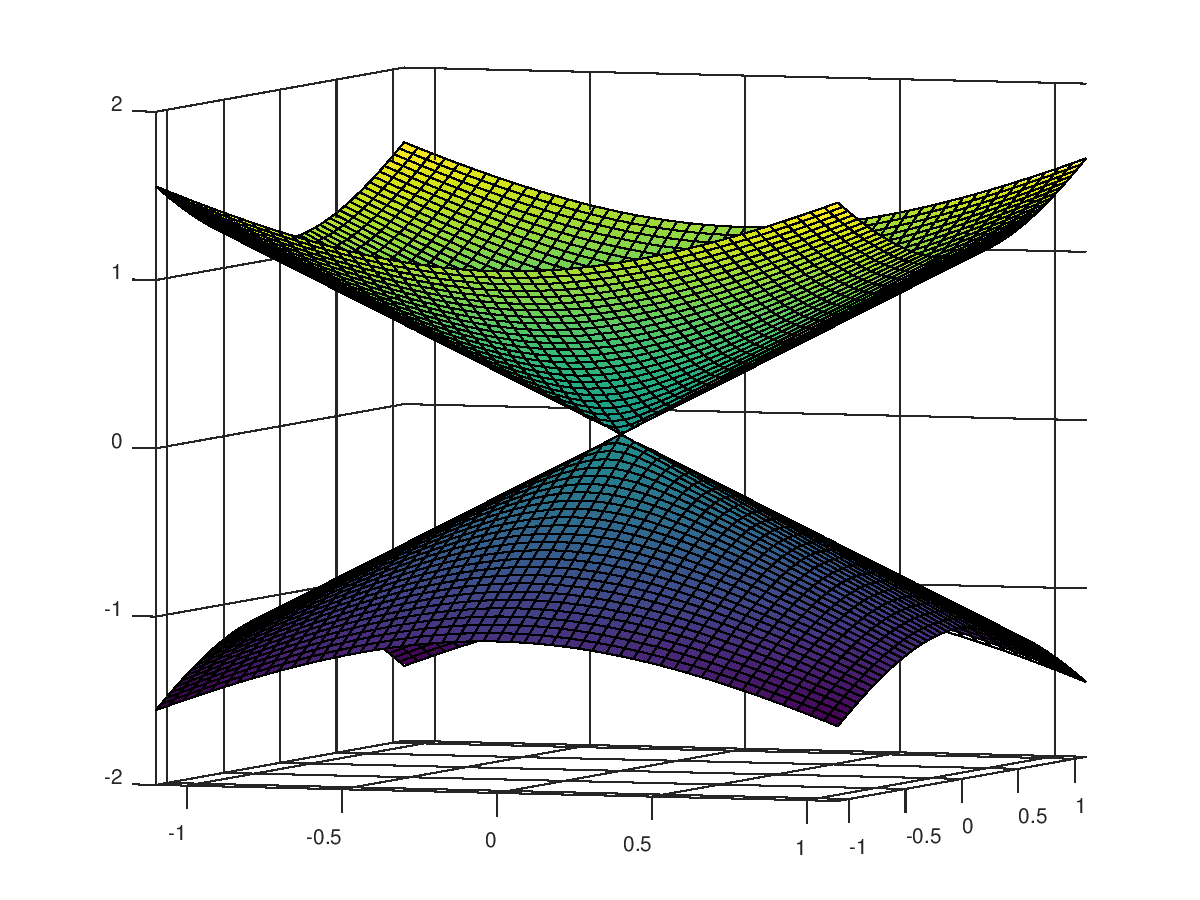}
    \includegraphics[scale=0.30]{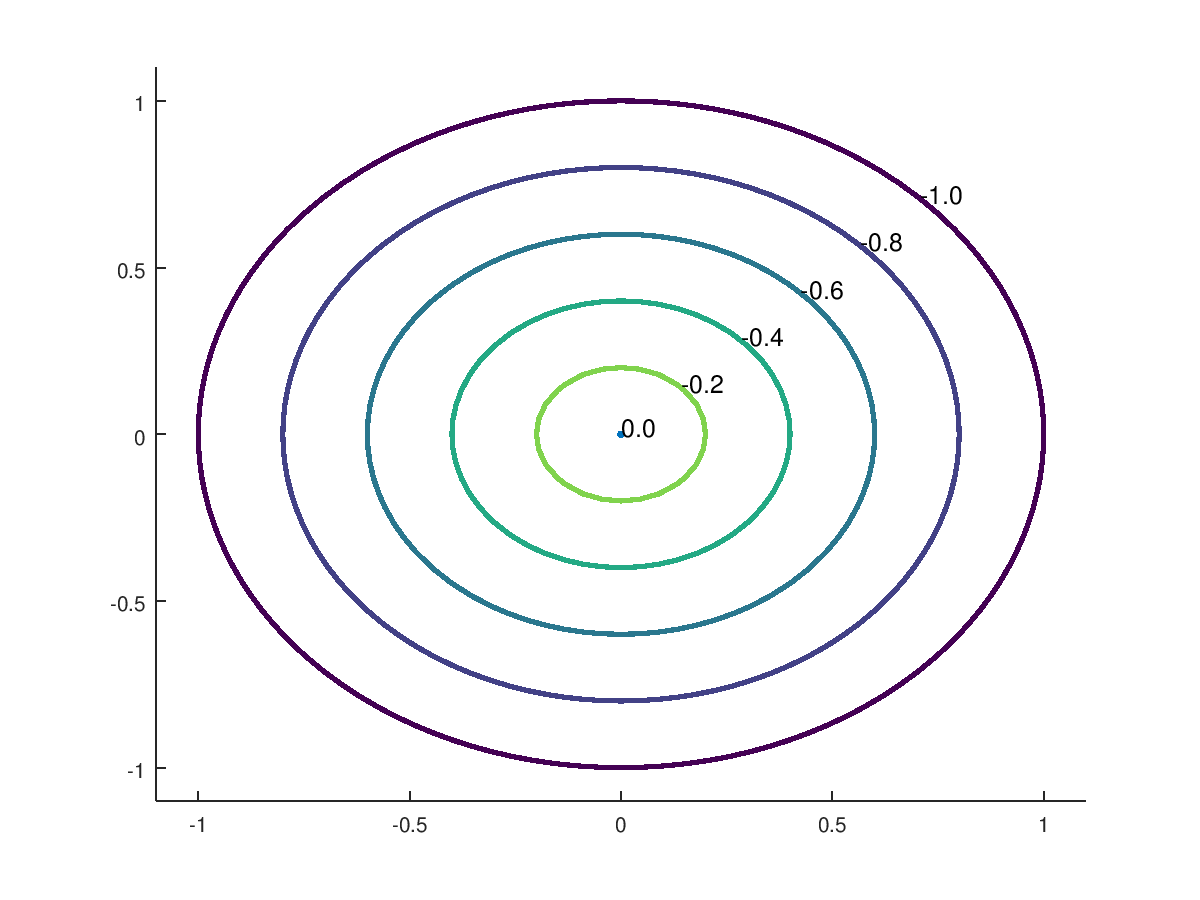}\\
    \includegraphics[scale=0.30]{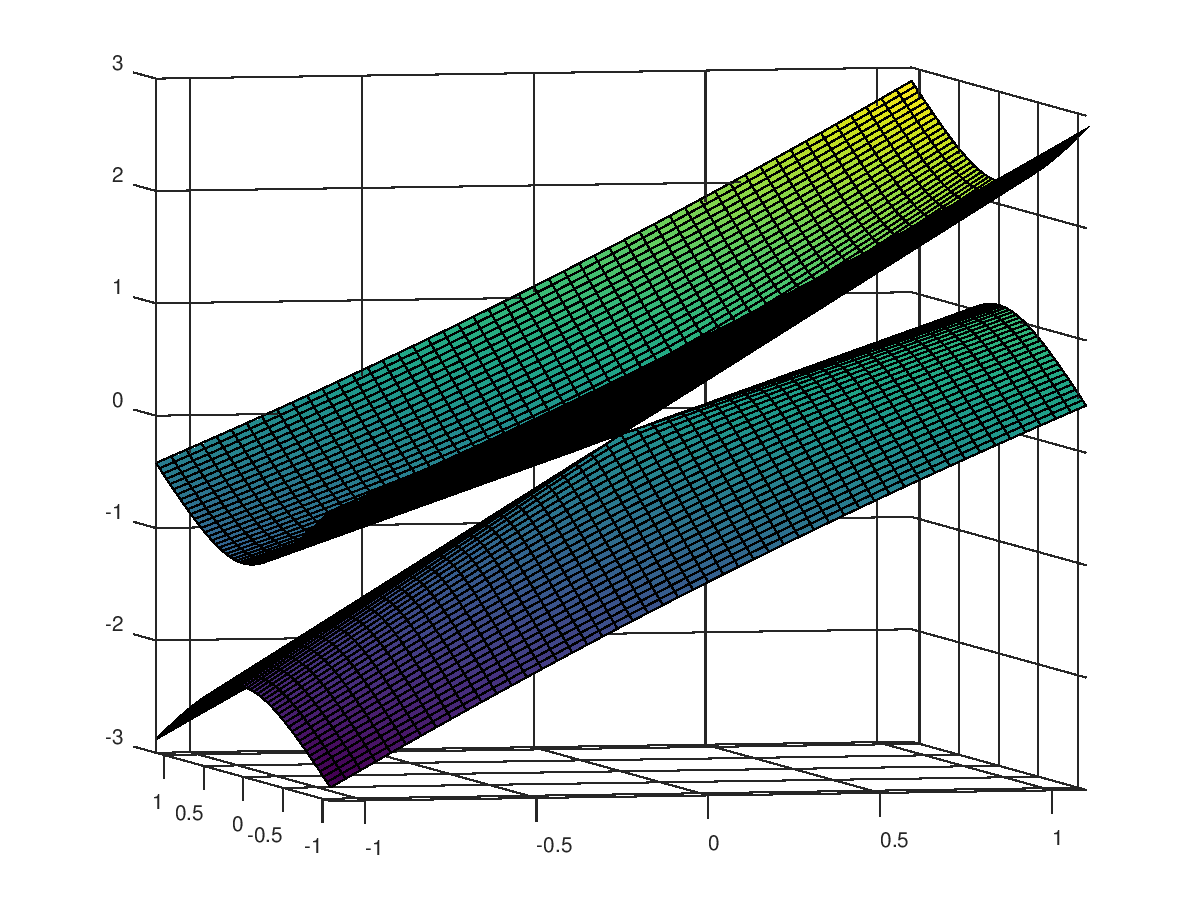}
    \includegraphics[scale=0.30]{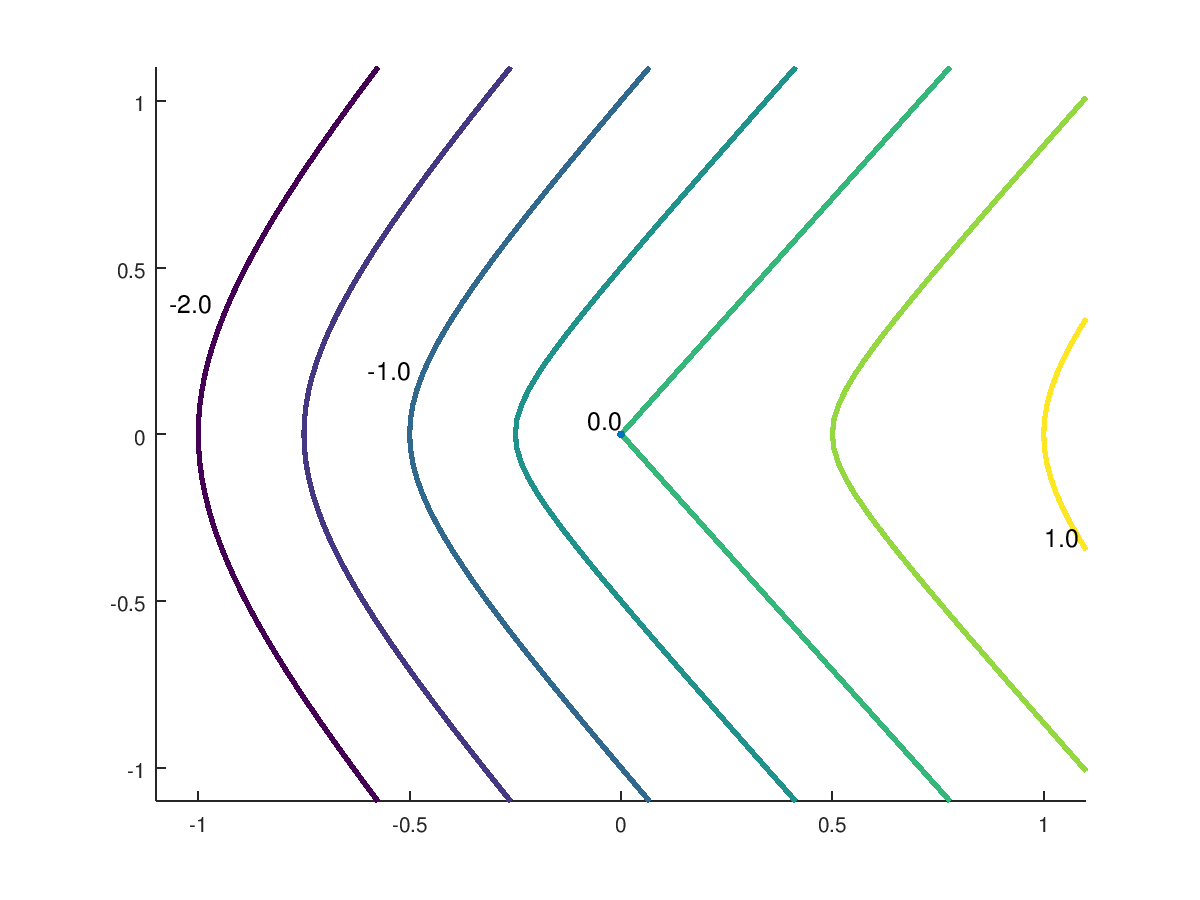}
    \caption{Eigenvalue surfaces (left) and contours of the first
      eigenvalue (right) for the families $\F_1$ (top) and
      $\F_2$ (bottom) from
      equation~\eqref{eq:two_cones}.}
    \label{fig:two_cones}
  \end{figure}
 
  We now check the condition of Theorem \ref{thm:regular} for the
  families $\F_1$ and $\F_2$. At the point $x=(0,0)$ the eigenspace $\esp_k$ is the
  whole of $\R^2$ and no restriction is needed.
  For the family $\F_1$, the mapping
  $\cH_x$ from \eqref{eq:HF_matrix} is
  \begin{equation*}
    \cH_x : v=
    \begin{pmatrix}
      v_1 \\ v_2
    \end{pmatrix}
    \mapsto v_1
    \begin{pmatrix}
      1 & 0 \\ 0 & -1
    \end{pmatrix}
    + v_2
    \begin{pmatrix}
      0 & 1 \\
      1 & 0
    \end{pmatrix},
  \end{equation*}
  and therefore
  \begin{equation*}
    \left(\Ran\cH_{x}\right)^\perp = 
    \Span\left\{
      \begin{pmatrix} 1&0\\0&1\end{pmatrix}
    \right\},
  \end{equation*}
  satisfying non-degenerate criticality condition (N),
  \eqref{eq:ndccn}.  Since the point $x=(0,0)$ of eigenvalue
  multiplicity 2 is isolated, the criticality condition (S) is
  vacuously true.  Theorem~\ref{thm:critical} applies at $x=(0,0)$ and
  the Morse data for the two sheets is given by the $\nu=2$ row of
  Table~\ref{tab:Morse_poly}.
  
  Proceeding to the family $\F_2$, a similar calculation yields
  \begin{equation*}
    \left(\Ran\cH_{x}\right)^\perp = 
    \Span\left\{
      \begin{pmatrix} 2&0\\0&-1\end{pmatrix}
    \right\}
  \end{equation*}
  contains no positive semidefinite matrices.  Hence $x$ is a
  topologically regular point for $\F_2$ by Theorem~\ref{thm:regular}.
\end{example}

\begin{example}
	\label{semidef_example}
  The case of $\left(\Ran\cH_{x}\right)^\perp$ being spanned by a
  semidefinite matrix which satisfies
  neither~\eqref{eq:regular_condition} nor condition~\eqref{eq:ndccn},
  is borderline.  As an example, consider the family
  \begin{equation}
    \label{eq:borderline_family}
    \F(x) =
    \begin{pmatrix}
      x_1 & x_2 \\
      x_2 & x_1x_2 + x_1^2
    \end{pmatrix}.
  \end{equation}
  For the point $x=(0,0)$ of multiplicity 2 we have
  \begin{equation*}
    \Ran\cH_{x} = \Span\left\{
      \begin{pmatrix}
        1 & 0 \\ 0 & 0
      \end{pmatrix},
      \begin{pmatrix}
        0 & 1 \\ 1 & 0
      \end{pmatrix}
    \right\},
    \qquad
    \left(\Ran\cH_{x}\right)^\perp =
    \Span\left\{
      \begin{pmatrix}
        0&0\\0&1
      \end{pmatrix}
    \right\}.
  \end{equation*}
  Regularity condition~\eqref{eq:regular_condition} is violated and so
  is criticality condition~\eqref{eq:ndccn}.  However, the constant
  multiplicity stratum $S$ is well-defined: it is the isolated point
  $\{x\}$.  As can be seen in Figure~\ref{fig:deg}, we have both
  behaviors (regular and critical) at once: the lower eigenvalue has a
  topologically regular point at $x$ while the upper has a
  topologically critical point there.
  \begin{figure}
    \centering
    \includegraphics[scale=0.19]{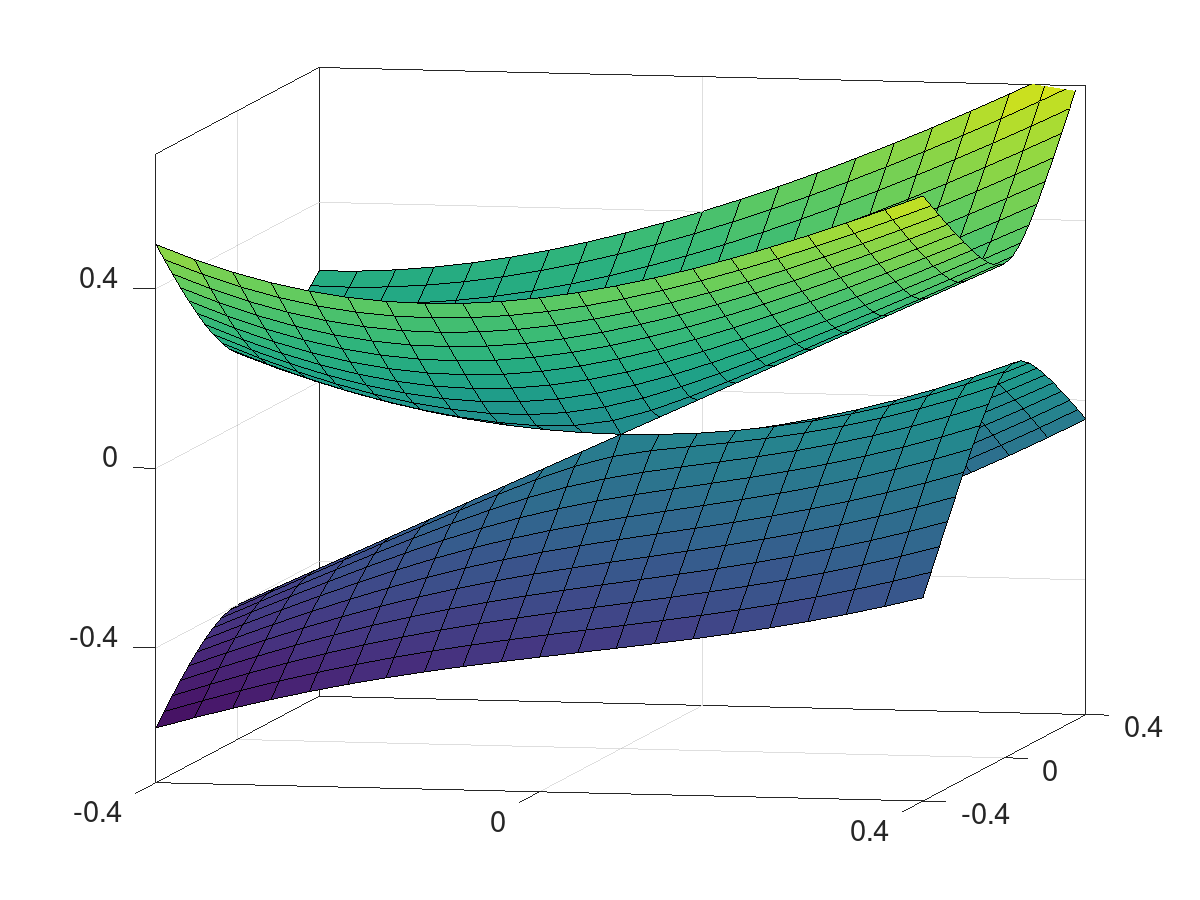}
    \includegraphics[scale=0.19]{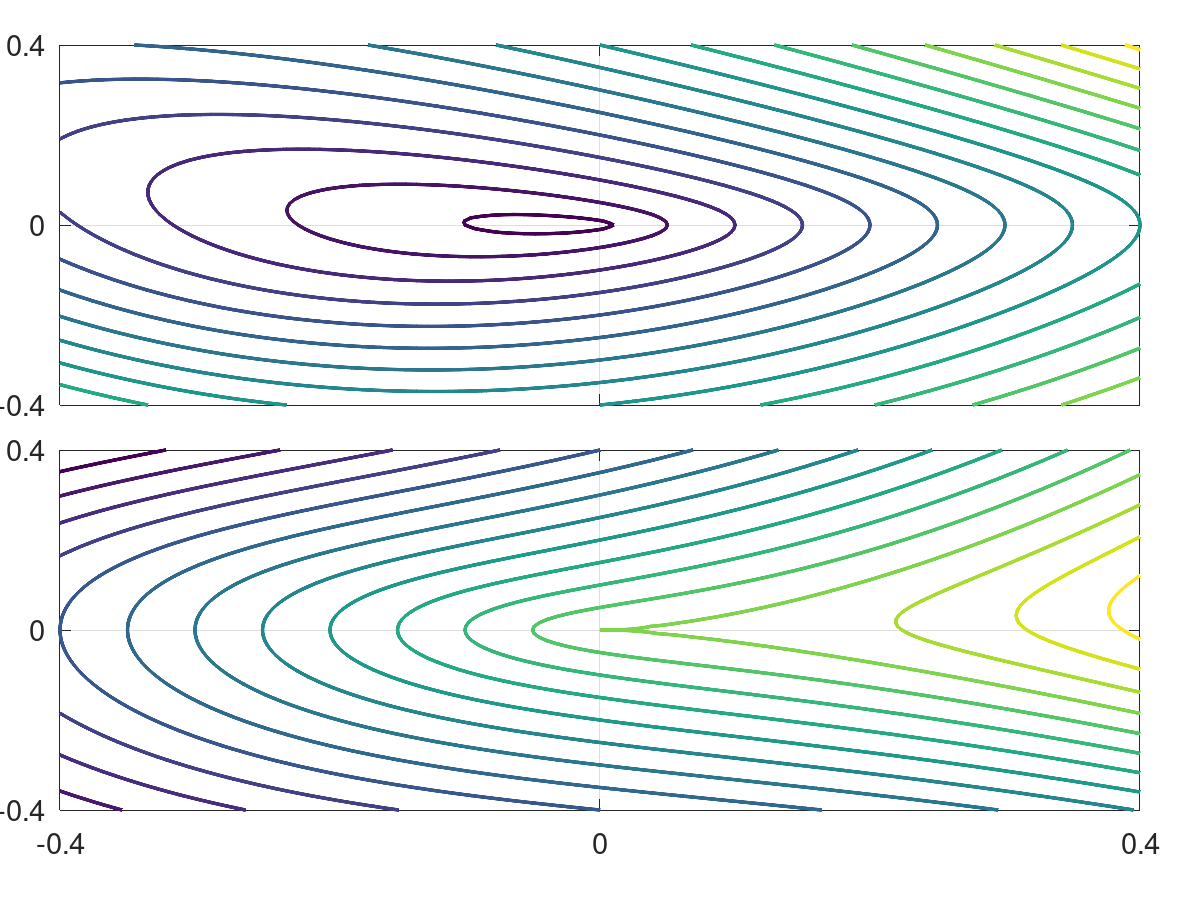}
    \caption{Eigenvalue surfaces (left) and contours for the family
      $\F$ from equation~\eqref{eq:borderline_family}.  The point
      $x=(0,0)$ is topologically regular for the bottom eigenvalue and topologically critical
      (non-smooth minimum) for the top one.}
    \label{fig:deg}
  \end{figure}
\end{example}

\begin{example}
  \label{ex:PosCone}
  We now explore in detail the regularity and criticality
  conditions of Theorems~\ref{thm:regular} and \ref{thm:critical} for
  families of $2\times2$ matrices.
  We parametrize $\Sym_2(\R)$ using $\R^3$ via the mapping
  \begin{equation}
    \label{eq:Sym2_param}
    (x,y,z) \mapsto
    \begin{pmatrix}
      x+y & z \\ z & x-y
    \end{pmatrix}.
  \end{equation}
  In this parameterization, the Frobenius inner product
    (normalized by $1/2$) becomes the Euclidean inner product, making
    orthogonality visual.  In the $(x,y,z)$ space, the sign definite
  matrices form the interior of the cone, $x^2 < y^2+z^2$.

  \begin{figure}
    \centering
    \includegraphics[scale=0.55]{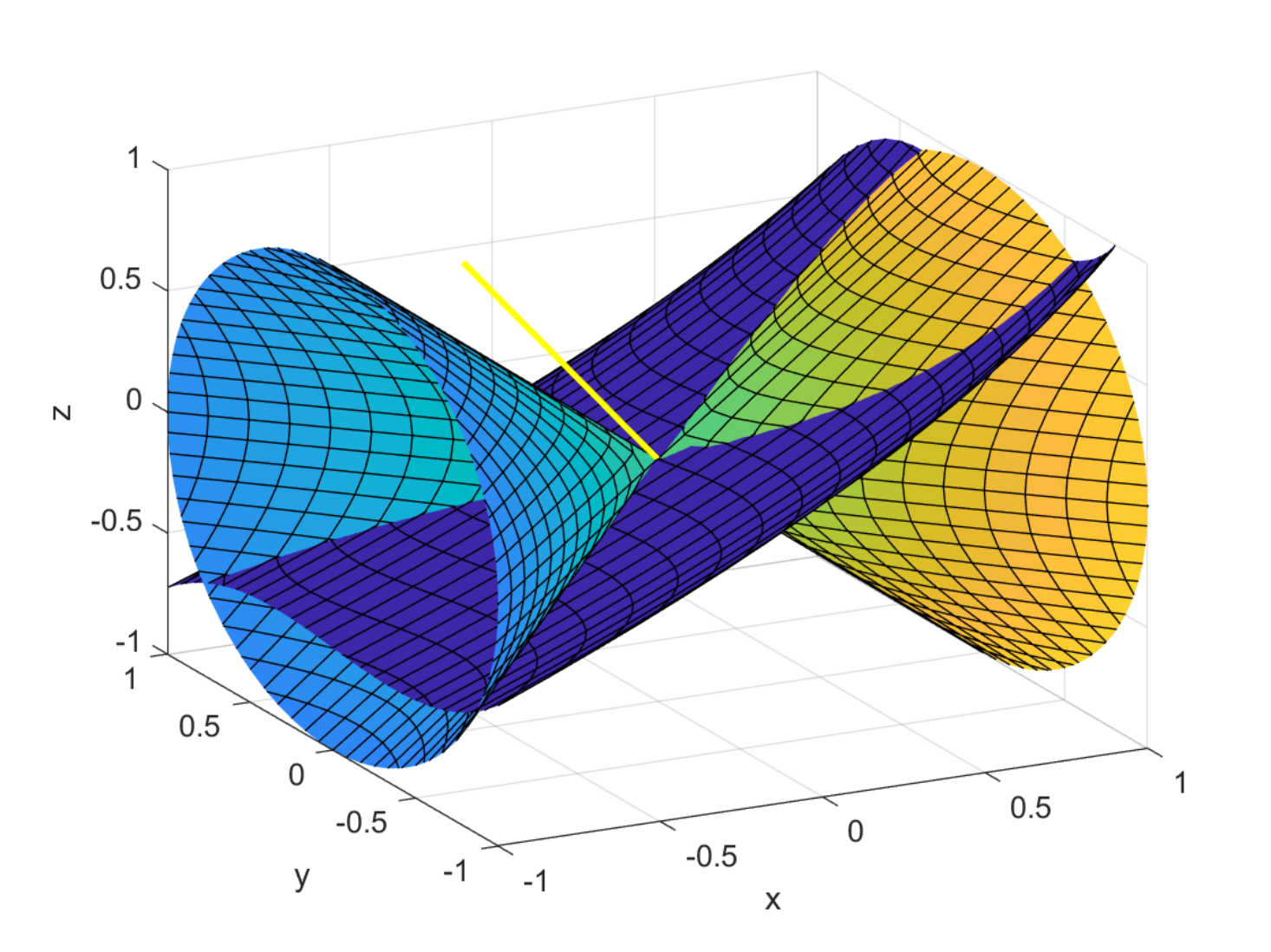}
    \includegraphics[scale=0.55]{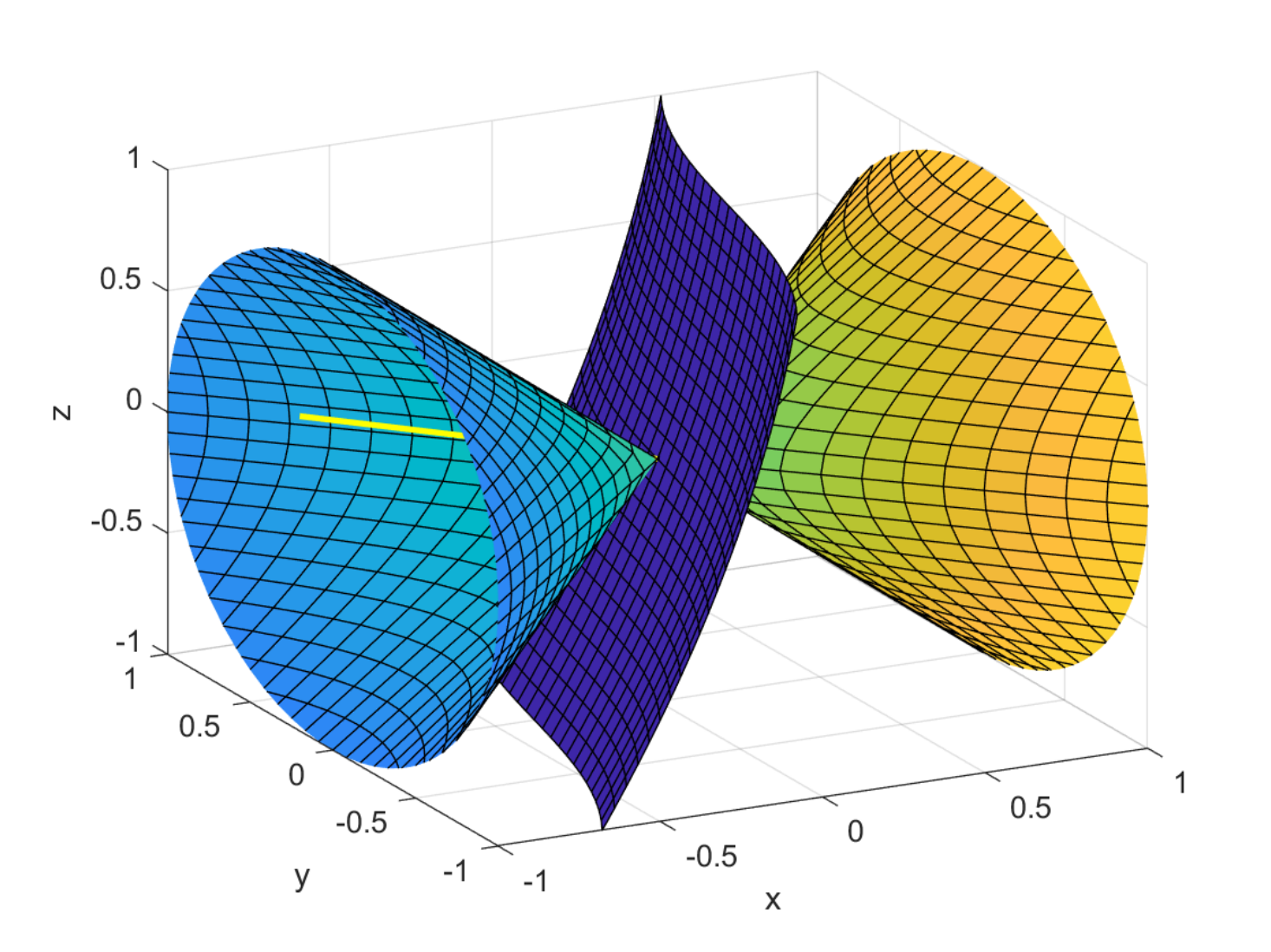}
    \caption{The cone whose interior consists of sign definite
      $2\times2$ matrices is visualized in the 3-dimensional space
      parametrizing $\Sym_2(\R)$ via \eqref{eq:Sym2_param}.  The
      monochrome surfaces represent the images of the families $\F$
      satisfying (on the left) the regularity condition and (on the
      right) criticality condition (N) at $\F(0)=0$. The normal to the
      surface, representing a matrix in
      $\left(\Ran\cH_{0}\right)^\perp$, lies outside $\Sym^{+}_2$ in
      the left graph and inside $\Sym^{++}_2$ (up to an overall sign)
      in the right graph.}
    \label{fig:PosCone}
  \end{figure}

  Assume that $\F$ depends on two parameters and satisfies $\F(0)=0$,
  with $x=0$ being the only point of multiplicity 2.
  Figure~\ref{fig:PosCone} shows two such families.  The regularity
  condition of Theorem~\ref{thm:regular} is equivalent to the tangent
  space at $0$ to the image of $\F$ intersecting the sign definite
  cone, Figure~\ref{fig:PosCone} (left).  Similarly, criticality
  condition (N), is equivalent to the tangent space to the image of
  $\F$ having dimension 2 and lying outside the cone, which puts the
  normal to $\F$ at $0$ inside the cone, see Figure~\ref{fig:PosCone}
  (right).
\end{example}

\subsection{Some applications}
\label{sec:applications}

Now we give some consequences of our main Theorems.  We start with the
observation that a maximum of an eigenvalue $\lambda_k$ cannot occur
at a point of multiplicity where $\lambda_k$ coincides with an
eigenvalue below it (the proof of Corollary~\ref{extremum_criteria} is
given at the end of section \ref{sec:main_theorem_proofs}).

\begin{corollary}
  \label{extremum_criteria}
  Let $x$ be a non-degenerate topologically critical point of
  the eigenvalue $\lambda_k$ of a generalized Morse
  family (generic by Theorem~\ref{thm:gen_Morse_generic_intro})
  $\F:M \rightarrow \mathrm{Sym}_n(\FF)$, where $\FF = \R$ or
  $\C$.  Then $x$ is a local maximum (resp.\ minimum) of
  $\lambda_k$ if and only if the following two conditions hold
  simultaneously:
  \begin{enumerate}
  \item \label{item:extremum_branch} the eigenvalue $\lambda_k$ is
    the bottom (resp.\ top) eigenvalue among those coinciding with
    $\lambda_k(x)$ at $x$; equivalently, the relative index
    $i(x, k) = \nu(x)$ (resp.\ $i(x) = 1$).
  \item \label{item:extremum_smooth} the restriction of $\lambda_k$ to
    the local constant multiplicity stratum attached to $x$ has a local
    maximum (resp.\ minimum) at $x$.
  \end{enumerate}
  Consequently, for a  generalized Morse family $\F$ over a compact manifold $M$,
  we have the strict inequalities
  \begin{align}
    \label{eq:global_max}
    &\max_{x\in M} \lambda_{k-1}(x) < \max_{x\in M} \lambda_{k}(x),
      \qquad k=2,\ldots n; \\
    \label{eq:global_min}
    &\min_{x\in M} \lambda_{k-1}(x) < \min_{x\in M} \lambda_{k}(x),
    \qquad k=2,\ldots n.
  \end{align}
\end{corollary}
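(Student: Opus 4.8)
The plan is to read off the local maximum/minimum behavior directly from the local relative homology computed in Theorems~\ref{thm:critical} and~\ref{thm:homology_geom}, combined with the classical Morse description of $\lambda_k\big|_S$ supplied by condition (S). First I would recall the elementary topological fact that a point $x$ is a local maximum of a continuous function $\phi$ if and only if, for a small enough neighborhood $U$ and small $\epsilon>0$, the sublevel set $U_x^{-\epsilon}(\phi)$ is empty while $U_x^{+\epsilon}(\phi)$ is contractible (it is all of $U$); dually $x$ is a local minimum iff $U_x^{-\epsilon}(\phi)$ is empty and $U_x^{+\epsilon}(\phi)$ retracts onto a point. In homological terms, $x$ is a local maximum iff the relative homology $H_*\bigl(U_x^{+\epsilon}(\lambda_k),\,U_x^{-\epsilon}(\lambda_k)\bigr)$ is that of a point in degree $0$ and vanishes in all positive degrees, because $H_*(U_x^{+\epsilon},\emptyset)=H_*(U_x^{+\epsilon})=H_*(\mathrm{pt})$; likewise for a local minimum the relative homology must be $\mathbb Z$ in degree $0$ and $0$ elsewhere as well. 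So the task reduces to identifying exactly when the right-hand side of \eqref{eq:Morse_contrib_nonsmooth} (or, more precisely, the relative homology groups of \eqref{eq:Hvia_Rspace}/\eqref{eq:relhom_main}) is concentrated in degree $0$ with value $\mathbb Z$, and to distinguishing the two cases (max vs.\ min) by which of $U_x^{-\epsilon}$, $U_x^{+\epsilon}$ is empty.

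Next I would dispose of the non-smooth direction. By Theorem~\ref{thm:homology_geom}\eqref{item:Hvia_Rspace}, when $i(x)=1$ the relative homology is $\mathbb Z$ concentrated in degree $\mu(x)$; when $1<i(x)<\nu(x)$ it is a shifted reduced homology of the suspension $\mathcal S\mathcal R_{\nu}^{i}$, and when $i(x)=\nu(x)$ one checks that $\mathcal R_\nu^\nu$ is the whole sphere $\{R\in\Sym_\nu^+:\Tr R=1\}$ minus the positive-definite locus, i.e.\ its boundary, which is $(s(\nu)-1)$-spherical in its reduced homology; its suspension shifts this to degree $s(\nu)$. I claim the relative homology is concentrated in a single degree with value $\mathbb Z$ precisely when $i(x)\in\{1,\nu(x)\}$: for $i(x)=1$ this degree is $\mu(x)$, and for $i(x)=\nu(x)$ it is $\mu(x)+s(\nu)$. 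For $1<i(x)<\nu(x)$ the space $\mathcal R_\nu^i$ has nontrivial homology in more than one degree (or, when its Poincaré polynomial is $0$, nontrivial $\mathbb Z_2$-homology as flagged after Theorem~\ref{thm:critical}), so the relative homology is not that of a point; either way $x$ is neither a local max nor a local min. This establishes the necessity of condition~\eqref{item:extremum_branch}. For the smooth direction, condition (S) means $\lambda_k\big|_S$ is a classical non-degenerate critical point with Morse index $\mu(x)$, so $\mu(x)=0$ iff $x$ is a local minimum of $\lambda_k\big|_S$ and $\mu(x)=\dim S$ iff it is a local maximum of $\lambda_k\big|_S$; this is condition~\eqref{item:extremum_smooth}. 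Putting the pieces together: $x$ is a local minimum of $\lambda_k$ iff the total degree $\mu(x)+[\text{non-smooth shift}]$ equals $0$, forcing the non-smooth shift $=0$ (hence $i(x)=1$ since $s(\nu)\ge 1$ for $\nu\ge2$) and $\mu(x)=0$; dually $x$ is a local maximum iff the relative homology is concentrated in degree $0$ with the sublevel set $U_x^{-\epsilon}$ empty, which—tracking the construction in Section~\ref{sec:sublevel_change1} where the ``up'' direction corresponds to the largest splitting eigenvalue—forces $i(x)=\nu(x)$ and $\mu(x)=\dim S$. One must also separately argue that when $i(x)=\nu(x)$, $\mu(x)=\dim S$ the point really is a genuine local maximum (not just homologically trivial): here I would invoke the Hellmann–Feynman description in Remark~\ref{rem:simple_eig}, namely that leaving $S$ in any transverse direction all branches of the multiple eigenvalue move, and $\lambda_k$ being the top branch it is the maximum of those branches, so combined with the smooth maximum along $S$ it is a genuine local max; the minimum case ($i=1$) is symmetric.

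Finally, the global inequalities \eqref{eq:global_max}–\eqref{eq:global_min} follow by a short argument. For \eqref{eq:global_max}, let $x^*$ be a point where $\lambda_k$ attains its global maximum on the compact manifold $M$; being a global max it is in particular a local max, hence a topologically critical point, so by the first part either $x^*$ is a point of multiplicity $\nu\ge 2$ with $i(x^*)=\nu$ and $\lambda_{k-1}(x^*)=\lambda_k(x^*)$, or $\lambda_k$ is smooth at $x^*$ (multiplicity one) and $\lambda_{k-1}(x^*)<\lambda_k(x^*)$ strictly by the ordering of distinct eigenvalues. In the smooth case we are done since $\max_M\lambda_{k-1}\ge\lambda_{k-1}(x^*)$ is not what we want—rather we argue directly: $\max_M\lambda_{k-1}=\lambda_{k-1}(y^*)$ for some $y^*$, and $\lambda_{k-1}(y^*)\le\lambda_k(y^*)\le\max_M\lambda_k$; strictness follows because equality throughout would force $y^*$ to be a point of multiplicity with $\lambda_{k-1}(y^*)=\lambda_k(y^*)=\max_M\lambda_k$, making $y^*$ a local (indeed global) maximum of $\lambda_k$ with $i(y^*)\ge 2$, contradicting condition~\eqref{item:extremum_branch} which requires $i(y^*)=1$ at a maximum. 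The inequality \eqref{eq:global_min} is proved by the mirror-image argument with minima and $i(x)=1$. The main obstacle I anticipate is the bookkeeping in the non-smooth direction when $i(x)=\nu(x)$: one must verify cleanly that $\mathcal R_\nu^\nu$ (equivalently $\mathcal S\mathcal R_\nu^\nu$, equivalently $\Gr_\FF(\nu-1,\nu-1)=\mathrm{pt}$ in \eqref{eq:relhom_main}) produces relative homology $\mathbb Z$ in degree exactly $\mu+s(\nu)$ and nothing else, and to correctly match the ``emptiness of $U_x^{-\epsilon}$'' with this being a maximum rather than a minimum—this is precisely the content of the ``top/bottom eigenvalue'' remark after Table~\ref{tab:Morse_poly} and will be made rigorous using the reduction of Section~\ref{sec:sublevel_change1}.
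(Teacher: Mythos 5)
Your overall plan matches the paper's for the local characterization (read off the contribution to the Poincar\'e polynomial from \eqref{eq:Morse_contrib_nonsmooth} and observe that a single monomial of the right degree forces $i\in\{1,\nu\}$ together with the appropriate $\mu$), but the execution contains several genuine max/min label errors that make the written argument incorrect. The basic topological criterion you invoke is wrong: for a local \emph{maximum}, $U_x^{-\epsilon}$ is not empty (it is a ``collar'' homotopy equivalent to $\Sph^{d-1}$) while $U_x^{+\epsilon}=U$, so $H_*\bigl(U_x^{+\epsilon},U_x^{-\epsilon}\bigr)$ is $\Z$ concentrated in degree $d=\dim M$, not degree $0$; degree $0$ with $U^{-\epsilon}=\emptyset$ is the criterion for a \emph{minimum}. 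You state the degree-$0$ characterization for both, and then later write ``dually $x$ is a local maximum iff the relative homology is concentrated in degree $0$ \dots which forces $i(x)=\nu(x)$ and $\mu(x)=\dim S$'' --- but degree $0$ forces $i=1$ and $\mu=0$, not the stated conclusion, so this step doesn't follow from your own criterion. Two further sign slips of the same kind: in your sufficiency sketch ``$\lambda_k$ being the top branch it is the maximum of those branches'' should read that for $i(x)=\nu(x)$, $\lambda_k$ is the \emph{bottom} branch of the group, and what this buys is that the transverse branches all move \emph{up} relative to $\lambda_k$; and in your argument for \eqref{eq:global_max} you write ``contradicting condition~\eqref{item:extremum_branch} which requires $i(y^*)=1$ at a maximum,'' but the maximum condition is $i(y^*)=\nu(y^*)$ (which is indeed contradicted, since $\lambda_{k-1}(y^*)=\lambda_k(y^*)$ forces $i(y^*)<\nu(y^*)$, not $i(y^*)\ge 2$ per se).

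On the methodological comparison: the paper's necessity argument is the same as yours (a max contributes exactly $t^d$ to $P_{\lambda_k}$, and from \eqref{eq:Morse_contrib_nonsmooth} combined with $\mu\le\dim S=d-s(\nu)$ and the fact that $\fT_\nu^i$ has top degree $s(\nu)$ only at $i=\nu$, this pins down the two conditions). For sufficiency the paper takes a cleaner route than your Hellmann--Feynman sketch: it reads off the normal Morse data $(\mathbb{B}^{s(\nu)},\partial\mathbb{B}^{s(\nu)})$ at $i=\nu$ from Remark~\ref{rem:normal_data_bottom}, pairs it with the tangential data $(\mathbb{B}^{d-s(\nu)},\partial\mathbb{B}^{d-s(\nu)})$ from condition (S), and invokes the Goresky--MacPherson product decomposition \cite[Thm I.3.7]{GM88} to conclude the local Morse data is $(\mathbb{B}^d,\partial\mathbb{B}^d)$, which is exactly the definition of a local max; dually $(\mathbb{B}^d,\emptyset)$ for a min. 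Your Hellmann--Feynman approach would need care with higher-order terms and is harder to make airtight. On the positive side, the paper does not spell out the derivation of the global inequalities \eqref{eq:global_max}--\eqref{eq:global_min}, and your argument (global maximizer $y^*$ of $\lambda_{k-1}$ with $\lambda_{k-1}(y^*)=\lambda_k(y^*)=\max\lambda_k$ would be a local max of $\lambda_k$ with $\lambda_{k-1}$ in the degenerate group, forcing $i(y^*)<\nu(y^*)$, contradiction) is essentially right once the $i=1$ versus $i=\nu$ labels are fixed. If you correct the degree-$d$/degree-$0$ swap and the attendant top/bottom branch and $i=1$/$i=\nu$ swaps consistently throughout, the proposal becomes a valid proof.
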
	

Similarly\footnote{The similarity is natural since our
  Theorem~\ref{thm:critical}
  reproduces the classical Morse inequalities if one sets $n=1$.} to
the classical Morse theory,
Theorem~\ref{thm:critical} can be
used to obtain lower bounds on the number of critical points of a
particular type, smooth or non-smooth.  Our particular example is
motivated by condensed matter physics, where the density of states
(either quantum or vibrational) of a periodic structure has
singularities caused by critical points \cite{Mon_jcp47,Smo_ppsl52} in
the ``dispersion relation'' --- the eigenvalue spectrum as a function
of the wave vector ranging over the reciprocal space.  Van Hove
\cite{vHo_prl53} classified the singularities (which are now known as
``Van Hove singularities'') and pointed out that they are unavoidably
present due to Morse theory applied to the reciprocal space, which is
a torus due to periodicity of the structure.

Of primary interest is to estimate the number of \emph{smooth}
critical points which produce stronger singularities.  Below we make
the results of \cite{vHo_prl53} rigorous, sharpening the estimates in
$d=3$ dimensions.  We also mention that higher dimensions, now open to
analysis using Theorem~\ref{thm:critical} are not a mere mathematical
curiosity: they are accessible to physics experiments through
techniques such as periodic forcing or synthetic dimensions
\cite{Pri_pt22}.

\begin{corollary}  
  \label{cor:torus23} 
  Assume that $M$ is the torus $\mathbb T^d$,
  $d=2$ or $3$. Let $\F: M \rightarrow \Sym_n$ be a generalized Morse
  family (generic by Theorem~\ref{thm:gen_Morse_generic_intro}).  Then
  the number $c_\mu(k)$ of smooth critical points of $\lambda_k$ of
  Morse index $\mu = 0,\ldots,d$ satisfied the following lower bounds.
  \begin{enumerate}
  \item In $d=2$ any ordered eigenvalue has at least
    two smooth saddle points, i.e.
    \begin{equation}
      \label{eq:d2m1}
      c_1(k) \geq 2, \qquad k=1,\ldots,n.
    \end{equation}
  \item In $d=3$,
    \begin{align}
      \label{eq:d3k1}
      &c_1(1) \geq 3, \\
      \label{eq:d3genk}
      &c_1(k) + c_2(k-1) \geq 4, \qquad k=2,\ldots,n, \\
      \label{eq:d3kn}
      &c_2(n) \geq 3.
    \end{align}
  \end{enumerate}
\end{corollary}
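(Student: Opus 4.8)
The plan is to apply the Morse inequalities of Theorem~\ref{thm:critical}\eqref{item:Morse_polynomial} to $\lambda_k$ on $M=\mathbb T^d$, extract from the resulting polynomial identity the information about the coefficients $c_\mu(k)$ counting \emph{smooth} critical points of Morse index $\mu$, and then combine several such identities (for consecutive values of $k$) to eliminate the unknown non-smooth contributions. First I would recall that, since $\F$ is generalized Morse, every topologically critical point $x$ is either a genuine smooth critical point of $\lambda_k$ (multiplicity $\nu(x)=1$, contributing $t^{\mu(x)}$) or a non-smooth one, contributing $t^{\mu(x)}\fT^{i(x)}_{\nu(x)}(t)$ as in \eqref{eq:Morse_contrib_nonsmooth}. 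Thus $P_{\lambda_k}(t) = \sum_{\mu=0}^d c_\mu(k)\,t^\mu + N_k(t)$, where $N_k(t)$ collects the non-smooth contributions and, crucially, has \emph{nonnegative integer coefficients} and \emph{zero constant term} (indeed $\fT^i_\nu$ has lowest degree $s(i)\geq s(1)=0$ only when $i=1$, but for $i=1$ with $\nu\geq 2$ one has $s(1)=0$ and $\fT^1_\nu=1$; I need to be slightly careful here — see below). The Poincar\'e polynomial of $\mathbb T^d$ is $P_M(t)=(1+t)^d$, so Morse inequalities \eqref{eq:Morse_inequalities} assert that $P_{\lambda_k}(t)-(1+t)^d$ is divisible by $(1+t)$ with a quotient $R_k(t)$ having nonnegative coefficients; equivalently, $P_{\lambda_k}(-1)=0$ (Euler characteristic, automatic) and all partial alternating sums of coefficients of $P_{\lambda_k}$ dominate those of $(1+t)^d$.

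For $d=2$: $(1+t)^2 = 1+2t+t^2$. Evaluating the coefficient-wise Morse inequalities, the coefficient of $t^1$ in $P_{\lambda_k}$ is $c_1(k)$ plus the (nonnegative) coefficient of $t^1$ in $N_k(t)$; but the classical Morse inequality in the middle degree for a surface, written as $c_1(k)\geq \beta_1 - (\text{corrections})$, combined with the standard argument that the alternating sum $c_0-c_1+c_2$ (including non-smooth contributions evaluated appropriately) equals $\chi(\mathbb T^2)=0$, forces $c_1(k)\geq 2$ once one checks the non-smooth contributions cannot lower this bound. Concretely: a non-smooth critical point of index data $(\nu,i)$ contributes a polynomial $\fT^i_\nu$ all of whose monomials have degree $\neq 1$ when $\nu\geq 2$ except possibly via the $\mu(x)=1$ shift; but for $d=2$ the stratum $S$ where $\lambda_k$ keeps multiplicity has codimension $s(\nu)\geq s(2)=2$, hence $\dim S\leq 0$, so $\mu(x)=0$ and the non-smooth monomials sit in degrees $s(i),\ldots$ which for $i=1$ is $0$ and for $i=\nu=2$ is $s(2)=2$. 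Either way they never land in degree $1$. So the coefficient of $t$ in $P_{\lambda_k}$ is exactly $c_1(k)$, and the Morse inequality $[t^1]\colon c_1(k)\geq \beta_1=2$ gives \eqref{eq:d2m1}.

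For $d=3$: $(1+t)^3=1+3t+3t^2+t^3$, $\beta_q=\binom{3}{q}$. Now the constant-multiplicity stratum has $\dim S \leq 3-s(\nu)\leq 3-2=1$, so $\mu(x)\in\{0,1\}$. The non-smooth monomials appear in degrees $\mu(x)+s(i)$ and higher; for $i=1$ this can be degree $0$ or $1$; for $i\geq 2$, $s(i)\geq 2$ so degrees are $\geq 2$. The key point is that a non-smooth contribution of relative index $i\geq 2$ can add to the $t^2$-coefficient but \emph{not} to the $t^1$-coefficient, while an $i=1$ non-smooth point (a non-smooth \emph{minimum}, by Corollary~\ref{extremum_criteria}) contributes $t^{\mu(x)}$, $\mu(x)\in\{0,1\}$. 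For $k=1$, the relative index $i(x)$ is always $1$ (there is nothing below $\lambda_1$), so \emph{every} topologically critical point of $\lambda_1$ contributes $t^{\mu(x)}$ with $\mu(x)\leq 1$; hence $P_{\lambda_1}(t)=a_0+a_1 t$ with $a_0=c_0(1)+(\text{non-smooth minima})$, $a_1=c_1(1)+(\text{non-smooth index-}1)$. The Morse inequality in degree $1$ against $(1+t)^3$ reads (after the standard telescoping, using that higher-degree coefficients of $P_{\lambda_1}$ vanish) $c_1(1)\geq \beta_1 - \beta_0 + a_0 \geq 3$ once $a_0\geq\beta_0=1$, which holds since $\lambda_1$ attains a minimum; this yields \eqref{eq:d3k1}. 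The symmetric argument with $\lambda_n$ (where $i(x)=\nu(x)$ always, so by Corollary~\ref{extremum_criteria} all critical points are non-smooth maxima or smooth ones, living in top degrees) gives \eqref{eq:d3kn}. For the intermediate inequality \eqref{eq:d3genk}, I would pair $\lambda_k$ and $\lambda_{k-1}$: a non-smooth critical point $x$ of multiplicity $\nu\geq 2$ is simultaneously topologically critical for \emph{several} ordered eigenvalues, and its contributions to $P_{\lambda_k}$ and $P_{\lambda_{k-1}}$ have relative indices $i$ and $i-1$ respectively (or it is the boundary of the multiplicity block for one of them); summing the degree-$1$ Morse inequality for $\lambda_k$ and the degree-$2$ Morse inequality for $\lambda_{k-1}$ makes the genuinely non-smooth contributions cancel or reinforce, leaving $c_1(k)+c_2(k-1)\geq \beta_1+\beta_2 - (\text{lower Betti numbers}) = 3+3-\ldots$; a careful bookkeeping of the Euler-characteristic constraint $\chi(\mathbb T^3)=0$ brings this to $\geq 4$.

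The main obstacle will be the bookkeeping in \eqref{eq:d3genk}: one must track precisely how a single point of eigenvalue multiplicity contributes to the Morse polynomials of \emph{all} the ordered eigenvalues passing through it, show that the $i\geq 2$ (genuinely non-smooth) parts do not pollute the $t^1$-coefficient for \emph{any} of them, and verify that the remaining $i=1$ and $i=\nu$ boundary contributions, together with the global constraint coming from $P_{\lambda_k}(-1)=\chi(\mathbb T^3)=0$ and the nonnegativity of the Morse-inequality remainder $R_k(t)$, force the stated bound rather than something weaker. The cases $d=2$, $k$ arbitrary and $d=3$, $k\in\{1,n\}$ are comparatively routine once one observes the degree restrictions on the non-smooth monomials coming from $\dim S\leq d-s(\nu)$; the coupled inequality is where the argument needs care.
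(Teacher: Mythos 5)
Your overall strategy coincides with the paper's: use the Morse inequalities of Theorem~\ref{thm:critical}\eqref{item:Morse_polynomial}, isolate the non-smooth contributions to $P_{\lambda_k}$ (only $\nu=2$ is possible in $d\le 3$, since $s(\nu)\geq 5$ for $\nu\geq 3$), and couple together the inequalities for consecutive ordered eigenvalues. The $d=2$ case you argue essentially correctly. However, your $d=3$ argument has a genuine error that propagates: you have the relative index reversed at the boundary eigenvalues. For $\lambda_1$ at a point of multiplicity $\nu=2$ one has $i(x)=\nu(x)=2$, not $i(x)=1$ (see \eqref{eq:relative_index_def}: $i$ counts from the top, and $\lambda_1$ is the \emph{bottom} of its multiplicity group); symmetrically, for $\lambda_n$ one has $i(x)=1$, not $i(x)=\nu(x)$. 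With the correct index, a non-smooth critical point of $\lambda_1$ contributes $t^{\mu(x)+2}$ with $\mu\in\{0,1\}$, i.e.\ to degrees $2$ or $3$, so it is precisely the degrees $0$ and $1$ that are free of non-smooth contributions --- exactly the opposite of what you assert. Your downstream claim that $P_{\lambda_1}(t)=a_0+a_1t$ is also wrong on independent grounds: smooth critical points of $\lambda_1$ (where $\nu=1$) can have Morse index $2$ or $3$ (e.g.\ the global maximum of $\lambda_1$ on the compact torus is a smooth critical point of index $3$). As stated, your argument for \eqref{eq:d3k1} and \eqref{eq:d3kn} does not go through.

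The other gap is in \eqref{eq:d3genk}. You correctly anticipate that the intermediate bound needs pairing $\lambda_{k}$ with $\lambda_{k-1}$, but you leave the ``bookkeeping'' vague. What actually closes the argument in the paper is the pair of exact identities
\begin{equation}
  d_0(k) = d_2(k-1), \qquad d_1(k) = d_3(k-1), \qquad k=2,\ldots,n,
\end{equation}
(together with the boundary vanishings $d_0(1)=d_1(1)=0$ and $d_2(n)=d_3(n)=0$), which follow because a $\nu=2$ non-smooth point contributing $t^{\mu}$ to $P_{\lambda_k}$ (forcing $i(x)=1$) is the same geometric point contributing $t^{\mu+2}$ to $P_{\lambda_{k-1}}$ (with $i=2$), and the correspondence is a bijection. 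Plugging these identities into the Morse-inequality chain in degrees $1$ for $\lambda_k$ and $2$ for $\lambda_{k-1}$ and adding the two is what eliminates all the $d_p$'s and produces the explicit constant $4$. Without this precise coupling, the nonnegativity of the remainder $R(t)$ alone does not pin down the bound, and the appeal to $P_{\lambda_k}(-1)=\chi(\mathbb{T}^3)=0$ (an identity automatically built into the Morse inequalities) does not add new information.
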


\begin{remark}
  Only the simpler estimates \eqref{eq:d3k1} and \eqref{eq:d3kn} for
  the bottom and top eigenvalue appear in \cite{vHo_prl53} for
  $d=3$; the guaranteed existence of smooth critical points in the
  intermediate eigenvalues \eqref{eq:d3genk} is a new result.  The
  intuition behind this result is as follows: when a point of
  eigenvalue multiplicity affects the count of smooth critical points
  of $\lambda_k$, it also affects the count of smooth critical point
  for neighboring ordered eigenvalues, such as $\lambda_{k-1}$, and it does so in
  a strictly controllable fashion since the Morse data depends little
  on the particulars of the family $\F$.  Carefully tracing these
  contributions across different ordered eigenvalues leads to sharper estimates.
\end{remark}

\begin{proof}[Proof of Corollary~\ref{cor:torus23}]
  It follows from
  Proposition~\ref{prop:const_mult_stratum}(\ref{item:codimensionS})
  that the maximal multiplicity of the eigenvalue is 2 (otherwise the
  codimension of $S$ is larger than the dimension $d=2$ or $3$ of the
  manifold).
  
  In the case $d=2$, the non-smooth critical points are
  isolated.  According to the first row of Table~\ref{tab:Morse_poly},
  such points do not contribute any $t^1$ terms.  Therefore, the
  coefficient of $t$ in $P_{\lambda_k}$ is $c_1(k)$ and, by Morse
  inequalities \eqref{eq:Morse_inequalities}, it is greater or equal
  than the first Betti number of $\mathbb{T}^2$, which is 2.

  In the case $d=3$ we need a more detailed analysis of the Morse
  inequalities \eqref{eq:Morse_inequalities} for $\lambda_k$.  
  We
  write them as
  \begin{equation*}
  	\sum_{p=0}^3 \big(c_p(k)+d_p(k)\big)t^p
  	= (1+t)^3 + (1+t)\big(\alpha_0(k) + \alpha_1(k)t + \alpha_2(k)t^2\big),
  \end{equation*}
  where $d_p(k)$ is the contribution to the polynomial $P_{\lambda_k}$
  coming from the points of multiplicity 2, $(1+t)^3$ is the Poincar\'e
  polynomial of $\mathbb{T}^3$, and where $\alpha_p(k)$ are the
  nonnegative coefficients of the remainder term $R(t)$ in
  \eqref{eq:Morse_inequalities}. Then
  similarly to \eqref{eq:Morse_ineq_full}, we have
  \begin{align}
    \label{eq:a0}
    &c_0(k) + d_0(k) = 1 + \alpha_0(k) \geq 1, \\
    \label{eq:a1}
    &c_1(k) + d_1(k) = 3 + \alpha_0(k) + \alpha_1(k) \geq 2 + c_0(k) + d_0(k), \\
    \label{eq:a1a}
    &c_2(k) + d_2(k) = 3 + \alpha_1(k) + \alpha_2(k) \geq 2 + c_3(k) + d_3(k), \\
    \label{eq:a2}
    &c_3(k) + d_3(k) = 1 + \alpha_2(k) \geq 1.
  \end{align}

  We also observe that if $\lambda_k$ has a non-smooth critical point
  $x$ counted in $d_0(k)$, then $\nu(x)=2$ with $\mu(x)=0$ and
  $i(x)=1$ (since this is the only way to obtain $t^0$ in
  \eqref{eq:Morse_contrib_nonsmooth} for $\nu=2$).  This implies that
  $\lambda_{k-1}(x) = \lambda_k(x)$ with the same constant
  multiplicity curve $S$ and the same point $x$ is a critical point of
  $\lambda_{k-1}$ with $\nu=2$, $\mu=0$ and $i=2$.  From
  Table~\ref{tab:Morse_poly} we have $P_{\lambda_{k-1}}(t;x) = t^2$,
  namely $x$ contributes to $d_2(k-1)$.  This argument can be done in
  reverse and also extended to points contributing to $d_1(k)$ (with
  $\nu=2$, $\mu=1$ and $i=1$), resulting in
  \begin{align}
    \label{eq:d_coeff_linked}
    &d_0(k) = d_2(k-1),
    \qquad
    d_1(k) = d_3(k-1),
    \qquad
      k=2,\ldots,n, \\
    \label{eq:d_coeff_boundary}
    &d_0(1)=d_1(1)=0,
    \qquad
    d_2(n)=d_3(n)=0.
  \end{align}
  The boundary values in \eqref{eq:d_coeff_boundary} are obtained by
  noting that we cannot have $\lambda_1(x) = \lambda_0(x)$ or
  $\lambda_n(x) = \lambda_{n+1}(x)$ since eigenvalues $\lambda_0$ and
  $\lambda_{n+1}$ do not exist.
  
  For $k=1$, \eqref{eq:d_coeff_boundary} substituted into
  \eqref{eq:a0} and \eqref{eq:a1} gives $c_0(1) \geq 1$ and
  $c_1(1) \geq 2+c_0(1) \geq 3$, establishing \eqref{eq:d3k1}.
  Estimate \eqref{eq:d3kn} is similarly established from
  \eqref{eq:d_coeff_boundary}, \eqref{eq:a2} and \eqref{eq:a1a}.

  Replacing $k$ with $k-1$ in estimate \eqref{eq:a1a} and using
  \eqref{eq:d_coeff_linked} gives
  \begin{equation}
    \label{eq:ineq_k-1}
    c_2(k-1) + d_0(k) \geq 2 + c_3(k-1) + d_1(k) \geq 2 + d_1(k).
  \end{equation}
  Adding this last inequality to line \eqref{eq:a1} results in
  \eqref{eq:d3genk} after cancellations and the trivial estimate
  $c_0(k)\geq 0$.
\end{proof}

\begin{remark}
  \label{rem:not_3torus}
  It is straightforward to extend \eqref{eq:d3k1}--\eqref{eq:d3kn} to
  an arbitrary compact $3$-dimensional manifold $M$ with Betti numbers
  $\beta_r$, obtaining
  \begin{align}
    \label{eq:Md3k1}
    &c_1(1) \geq \beta_1, \\
    \label{eq:Md3genk}
    &c_1(k) + c_2(k-1) \geq \beta_1+\beta_2-\beta_0-\beta_3,
      \qquad k=2,\ldots,n, \\
    \label{eq:Md3kn}
    &c_2(n) \geq \beta_2.
  \end{align}
  These inequalities extend to $d=3$ the results of
  Valero \cite{Val_jta09} who studied critical points of principal
  curvature functions (eigenvalues of the second fundamental form) of
  a smooth closed orientable surface.
\end{remark}

Independence of the transverse  Morse contributions from the
particulars of the family $\F$ also allows one to sort the terms in
the Morse polynomial.  This is illustrated by the next simple result.

Let $\mathrm{Conseq}_{k,n}$ be the set of all subsets of
$\{1,\ldots, n\}$ containing $k$ and consisting of consecutive
numbers, i.e. subsets of the form $\{j_1, j_1+1, \ldots, j_2\} \ni k$.
Given $J \in\mathrm{Conseq}_{k,n}$, let $i(k; J)$ be the sequential
number of $k$ in the set $J$ but counting from the top (cf.\
\eqref{eq:relative_index_def}).  As usual, $|J|$ will denote the
cardinality of $J$.

Let $\F:M\rightarrow \Sym_n(\mathbb F)$ be a generalized Morse family. For any 
set $J\in \mathrm {Conseq}_{k,n}$, let
\begin{equation}
  \label{eq:const_mult_stratumJ}
  S(k, J) = \big\{x\in M: \lambda_j(x)=\lambda_k(x)
  \textrm{ if and only if } j\in J\big\}.
\end{equation}
By our assumptions, $S(k, J)$ are smooth embedded
submanifolds of $M$ and the restrictions
$\lambda_k|_{S(k, J)}$ of the eigenvalue $\lambda_k$ to
$S(k, J)$ are smooth.
 
\begin{corollary}
  \label{cor:Conseq}
  Given a generalized Morse family $\F:M\rightarrow \Sym_n(\mathbb F)$
  the following inequality holds
  \begin{equation}
    \label{Morse_ineq_submanifold}
    \sum_{J\in  \mathrm {Conseq}_{k,n}}
    \fT_{|J|}^{i(k;J)}(t) P_{\lambda_k|_{S(k, J)}}(t)
    \succeq P_{\lambda_k}(t) \succeq P_M(t),
  \end{equation}
  where $P(t)\succeq Q(t)$ if and only if the all coefficients of the
  polynomials $P(t)-Q(t)$ are nonnegative, the polynomials
  $\fT_{|J|}^{i(k;J)}$ are defined in
  \eqref{eq:Morse_contrib_nonsmooth}, and $P_{\lambda_k|_{S(k,J)}}(t)$
  are the Morse polynomials of the smooth functions
  $\lambda_k|_{S(k,J)}$ on $S(k, J)$.  In particular
  $P_{\lambda_k|_{S(k,\{k\})}}$ is the total contribution of all smooth
  critical points of $\lambda_k$.
\end{corollary}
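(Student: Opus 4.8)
The plan is to derive both halves of \eqref{Morse_ineq_submanifold} from Theorem~\ref{thm:critical}, with only elementary bookkeeping in between. The right-hand inequality $P_{\lambda_k}(t)\succeq P_M(t)$ needs no new argument: since $M$ is compact and $\F$ is generalized Morse, Theorem~\ref{thm:critical}(\ref{item:Morse_polynomial}) furnishes the Morse identity \eqref{eq:Morse_inequalities}, i.e.\ $P_{\lambda_k}(t)-P_M(t)=(1+t)R(t)$ with $R$ having nonnegative coefficients; as $1+t$ has nonnegative coefficients, so does the product, which is precisely $P_{\lambda_k}(t)\succeq P_M(t)$.

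For the left-hand inequality I would first partition the set $\mathrm{CP}(\F)$ of topologically critical points of $\lambda_k$, which is finite by Theorem~\ref{thm:critical}(\ref{item:criticality}). To such a point $x$ I attach the index set $J_x:=\{j:\lambda_j(x)=\lambda_k(x)\}$; since the eigenvalues are ordered, $J_x$ is a block of consecutive integers containing $k$, hence $J_x\in\mathrm{Conseq}_{k,n}$ and $x\in S(k,J_x)$. Because the sets $S(k,J)$, $J\in\mathrm{Conseq}_{k,n}$, are pairwise disjoint and cover $M$, this gives $\mathrm{CP}(\F)=\bigsqcup_{J}\bigl(\mathrm{CP}(\F)\cap S(k,J)\bigr)$. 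Next, for a fixed $J$ and $x\in\mathrm{CP}(\F)\cap S(k,J)$: since $\F$ is generalized Morse, the regularity condition \eqref{eq:regular_condition} is ruled out at $x$ by Theorem~\ref{thm:regular}, so conditions (N) and (S) hold there; hence, by Proposition~\ref{prop:const_mult_stratum} and Definition~\ref{def:nondeg_crit_S}, the local constant multiplicity stratum attached to $x$ is the germ of $S(k,J)$ at $x$ and $x$ is a \emph{non-degenerate} critical point of $\lambda_k|_{S(k,J)}$ (trivially so when $|J|=1$, where it is just a non-degenerate critical point of the smooth function $\lambda_k$). Writing $\mu(x)$ for its Morse index there, one reads off $\nu(x)=|J|$ and $i(x)=i(k;J)$ from the definitions, so Theorem~\ref{thm:critical} formula \eqref{eq:Morse_contrib_nonsmooth} gives the local contribution $P_{\lambda_k}(t;x)=t^{\mu(x)}\fT_{|J|}^{i(k;J)}(t)$.

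Summing \eqref{eq:Morse_poly_total_short} over $\mathrm{CP}(\F)$ and regrouping according to the partition then yields
\[
P_{\lambda_k}(t)=\sum_{J\in\mathrm{Conseq}_{k,n}}\fT_{|J|}^{i(k;J)}(t)\sum_{x\in\mathrm{CP}(\F)\cap S(k,J)}t^{\mu(x)}.
\]
For each $J$, every $x\in\mathrm{CP}(\F)\cap S(k,J)$ is (by (S) and Remark~\ref{rem:crit_not_top_crit}) a topologically critical point of $\lambda_k|_{S(k,J)}$ contributing exactly $t^{\mu(x)}$ to $P_{\lambda_k|_{S(k,J)}}(t)$, while all other topologically critical points of $\lambda_k|_{S(k,J)}$ contribute Poincar\'e polynomials of relative homology groups, hence nonnegative terms; therefore the inner sum is dominated coefficientwise, $\sum_{x\in\mathrm{CP}(\F)\cap S(k,J)}t^{\mu(x)}\preceq P_{\lambda_k|_{S(k,J)}}(t)$. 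Multiplying by $\fT_{|J|}^{i(k;J)}(t)$, whose coefficients are nonnegative (it is a monomial times a $q$-binomial coefficient, or $0$), and summing over $J$ produces the left-hand inequality. The ``in particular'' statement then follows by specializing to $J=\{k\}$: here $\fT_1^1(t)=1$ directly from \eqref{eq:Morse_contrib_nonsmooth} (since $s(1)=0$ and $\binom{0}{0}_q=1$), and every multiplicity-one critical point of $\lambda_k$ is non-degenerate by (S) and hence topologically critical, so for $J=\{k\}$ the comparison above is an equality and that term equals $P_{\lambda_k|_{S(k,\{k\})}}(t)$, the total contribution of all smooth critical points of $\lambda_k$.

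The one place where genuine care is needed — the ``main obstacle'' — is the matching carried out above: one must verify that the stratum produced by Proposition~\ref{prop:const_mult_stratum} really is the germ of $S(k,J)$, that the intrinsic invariants $\nu(x)$, $i(x)$, $\mu(x)$ of Theorem~\ref{thm:critical} coincide with the combinatorial/geometric data $|J|$, $i(k;J)$, and the Morse index of $\lambda_k|_{S(k,J)}$, and that $P_{\lambda_k|_{S(k,J)}}(t)$ is well-defined and behaves as claimed (it is the Morse polynomial of the smooth function $\lambda_k|_{S(k,J)}$ under the standing assumptions recorded just before the statement, and in any case each $x\in\mathrm{CP}(\F)\cap S(k,J)$ still contributes $t^{\mu(x)}$ with all remaining contributions nonnegative); everything else is routine index bookkeeping together with nonnegativity of $q$-binomial coefficients.
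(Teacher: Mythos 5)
Your proof is correct and follows essentially the same route as the paper's (much terser) argument: decompose $\mathrm{CP}(\F)$ by the index set $J_x$, use \eqref{eq:Morse_contrib_nonsmooth} to read off the local contribution $t^{\mu(x)}\fT_{|J|}^{i(k;J)}(t)$, and observe that replacing the sum $\sum_{x\in\mathrm{CP}(\F)\cap S(k,J)}t^{\mu(x)}$ by the full Morse polynomial $P_{\lambda_k|_{S(k,J)}}(t)$ can only add nonnegative terms. The extra bookkeeping you supply (confirming $\nu(x)=|J|$, $i(x)=i(k;J)$, identifying the local constant multiplicity stratum with the germ of $S(k,J)$, checking nonnegativity of $\fT$, and treating the ``in particular'' clause) is implicit in the paper's proof and correctly carried out.
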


\begin{proof}
  We only need to prove the  left inequality in
  \eqref{Morse_ineq_submanifold}.  By \eqref{eq:Morse_contrib_nonsmooth},
  the contribution of a topologically critical point $x \in S(k,J)$ to
  $P_{\lambda_k}(t)$ is $t^{\mu(x)}\, \fT_{|J|}^{i(k;J)}(t)$.
  Therefore, the left-hand side of \eqref{Morse_ineq_submanifold} is
  different from $P_{\lambda_k}(t)$ in that the former also includes
  contributions from smooth critical points of $\lambda_k|_{S(k,J)}$
  that do not give rise to a topologically critical point of
  $\lambda_k$.  However, those contributions are polynomials with
  non-negative coefficients, producing the inequality.
\end{proof}

We demonstrate Corollary~\ref{cor:Conseq} in a simple example
involving an intermediate eigenvalue. Letting $n=3$, $k=2$, and using the
first two rows of Table \ref{tab:Morse_poly}, inequality
\eqref{Morse_ineq_submanifold} reads:
\begin{equation}
  \label{Morse_ineq_n=3}
  P_{\lambda_2|_{S(2, \{2\})}} (t)
  +t^2 P_{\lambda_2|_{S(2,\{2,3\})}}(t)
  +P_{\lambda_2|_{S(2,\{1,2\})}}(t)
  \succeq  P_{\lambda_2}(t)\succeq P_M(t).	
\end{equation}
Note that the term with $P_{\lambda_2|_{S(2,\{1,2,3\})}}(t)$
does not appear in \eqref{Morse_ineq_n=3} because
$\fT_3^2(t)=0$ according to the second row of Table
\ref{tab:Morse_poly}.
Further simplifications of inequalities \eqref{Morse_ineq_n=3} are
possible if it is known a priori that $\lambda$ is a perfect Morse
function when restricted to the connected components of the constant
multiplicity strata $S_{k, \{k-1,k\}}(\F)$ and $S_{k, \{k,k+1\}}(\F)$.

\subsection{An open question}
\label{sec:questions}

Finally, we mention an \emph{open question} which naturally follows
from our work: to classify Morse contributions from points where the
multiplicity $\nu$ is higher than what is suggested by the codimension
calculation in the von~Neumann--Wigner theorem \cite{vNeWig_pz29}.
Such points often arise in physical problems due to presence of a
discrete symmetry; for an example, see
\cite{FefWei_jams12,BerCom_jst18}.  At a point of ``excessive
multiplicity'', the transversality condition
\eqref{eq:transversality_def} is not satisfied because $d < s(\nu)$,
but one can still define an analogue of the ``non-degeneracy in the
non-smooth direction'' (cf.\ Remark~\ref{rem:explaining_condition1}).
It appears that the Morse indices are independent of the particulars
of the family $\F$ when the ``excess'' $s(\nu)-d$ is equal to $1$, but
whether this persists for higher values of $s(\nu)-d$ is still
unclear.


\section{Regularity condition:
  proof of Theorem~\ref{thm:regular}}
\label{sec:precritical_but_regular}

In this section we establish Theorem~\ref{thm:regular}, namely the
sufficient condition for a point to be regular (see
Definition~\ref{gencritdef}).

Recall the definition of the Clarke directional
derivative\footnote{This is usually a stepping stone to defining the
  Clarke subdifferential, but we will limit ourselves to Clarke
  directional derivative which is both simpler and sufficient for our
  needs.}  of a locally Lipschitz function $f:M\rightarrow \mathbb R$
(for details, see, for example, \cite{Cbook,MP99}).  Given
$v\in T_x M$ , let $\widehat V$ be a vector field in a neighborhood of
$x$ such that $\widehat V (x)=v$ and let $e^{t\widehat V}$ denote the
local flow generated by the vector field $\widehat V$. Then the
\term{Clarke generalized directional derivative} of $f$ at $x$ in the
direction $v$ is
\begin{equation}
  \label{derder}
  f^\circ(x, v)
  = \limsup_{\substack {y\to x \\ t\to 0+}}
  \cfrac{f(e^{t\widehat V} y) - f(y)}{t}.
\end{equation}

Independence of this definition of the choice of $\widehat V$ follows
from the flow-box theorem and the chain rule for the Clark
subdifferential, see \cite[Thm 1.2(i) and Prop 1.4(ii)]{MP99}.

\begin{definition}
  \label{def:Clarke_critical}
  The point $x$ is called a \term{critical point} of $f$ \term{in the
    Clarke sense}, if
  \begin{equation}
    \label{eq:Clarke_critical_def}
    0 \leq f^\circ(x, v)
    \quad\mbox{for all}\quad
    v\in T_x M.    
  \end{equation}
  Otherwise, the point $x$ is said to be \term{regular in the Clarke
    sense}.
\end{definition}

The assumptions of Theorem~\ref{thm:regular} will be shown to imply
that the point $x$ is regular in the Clarke sense, whereupon we
will use the following result.

\begin{theorem}\cite[Proposition 1.2]{APS97}
  \label{thm:APS_Clarke_regular}
  A point regular in the Clarke sense is  topologically regular in the sense of
  Definition~\ref{gencritdef}.
\end{theorem}


\begin{proof}[Proof of Theorem~\ref{thm:regular}]
  We first establish that condition~\eqref{eq:regular_condition},
  namely
  \begin{equation*}
    \left(\Ran\cH_{x}\right)^\perp \cap \Sym_\nu^+ = 0,
  \end{equation*}
  is equivalent to
  existence of a matrix $C \in \Ran\cH_{x}$ which is (strictly) positive
  definite.  Despite being intuitively clear, the proof of this fact
  is not immediate and we provide it for completeness; a similar
  result is known as Fundamental Theorem of Asset Pricing in
  mathematical finance \cite{Duffie_AssetPricing}.  Assume the contrary,
  \begin{equation}
    \label{eq:contrary_pos_def}
    \Ran\cH_{x} \cap \Sym_\nu^{++} = \emptyset.
  \end{equation}
  The set $\Sym_\nu^{++}$ is open and convex (the latter can be seen
  by Weyl's inequality for eigenvalues).  A suitable version of the
  Helly--Hahn--Banach separation theorem (for example, \cite[Thm
  7.7.4]{NariciBeckenstein_TVS}) implies existence of a functional
  vanishing on $\Ran\cH_{x}$ and positive on $\Sym_\nu^{++}$.  By Riesz
  Representation Theorem, this functional is
  $\langle D, \cdot \rangle$ for some $D\in \Sym_\nu$, for which we
  now have $D \in \left(\Ran\cH_{x}\right)^\perp$ and $\langle D, P \rangle > 0$ for all
  $P \in \Sym_\nu^{++}$. In particular, $D$ is non-zero and belongs to
  the dual cone of $\Sym_\nu^{++}$, namely to $\Sym_\nu^{+}$
  \cite{BoydVandenberghe_optimization}, contradicting condition~\eqref{eq:regular_condition}.

  Secondly, results of \cite[Theorem 4.2]{Cox_incol94} (see
  also\footnote{Note that there is a misprint in the direction of the
    inequality in \cite[Section 6]{HU-L99}.} \cite[Section 6]{HU-L99})
  show that
  \begin{align}
    \label{eq:cox_estimate}
    \lambda^\circ_k(x,v)
    &\leq \max\left\{
      \Big\langle u, \big(d\mathcal{F}(x)v \big)u\Big\rangle
      \colon u \in \esp_k, \|u\|=1 \right\} \\
    &= \lambda^{\max}\left(\big(d\mathcal{F}(x)v \big)_{\esp_k}
      \right)
      = \lambda^{\max}\big(\mathcal{H}_{x}(v)\big),
  \end{align}
  where $\esp_k$ is the eigenspace of the eigenvalue $\lambda_k(x)$ of
  $\mathcal{F}(x)$; the middle equality is by the variational
  characterization of the eigenvalues and the last by the definition \eqref{eq:HF_matrix}
  of $\mathcal{H}_x$.
  
  We already established that there exists $v$ such that
  $\mathcal{H}_{x}(v) \in \Sym^{++}_\nu$.  Then $\mathcal{H}_{x}(-v)$
  is negative definite and we have
  \begin{equation}
    \label{eq:clarke_result}
    \lambda^\circ_k(x,v)
    \leq \lambda^{\max}\big(\mathcal{H}_{x}(-v)\big)
    < 0.
  \end{equation}
  The point $x$ is regular in the Clarke sense and, therefore, regular
  in the sense of Definition~\ref{gencritdef}.
\end{proof}

\section{Transversality and its consequences}
\label{sec:transversality}


\begin{definition}
  \label{def:transversality}
  We say that a family $\F$ is \term{transverse (with respect to
  	eigenvalue $\lambda_k$)}   at a point $x$ if 
  \begin{equation}
    \label{eq:transversality_def}
    \mathcal{I}_\nu + \Ran \cH_{x} = \Sym_\nu,
  \end{equation}
  where $\nu$ is the multiplicity of $\lambda_k$ at the point $x$ and
  $\mathcal{I}_\nu := \Span(I_\nu) \subset \Sym_\nu$ is the space of
  multiples of the identity matrix.
\end{definition}




In this section we explore the consequences of the transversality
condition, equation~\eqref{eq:transversality_def}.  In particular, in
Lemma~\ref{lem:transversality_reduced_test} we interpret
condition~\eqref{eq:transversality_def} as transversality of the
family $\F$ and the subvariety of $\Sym_n(\mathbb F)$ of matrices with
multiplicity.  We then show that transversality at a non-degenerate topologically 
critical point allows us to work separately in the smooth and
non-smooth directions.  In particular, we establish that a
non-degenerate topologically critical point satisfies the sufficient conditions of
Goresky--MacPherson's stratified Morse theory.  The latter allows us
to separate the Morse data at a topologically critical point into a smooth part and
a transverse  part; the latter will be shown in
section~\ref{sec:sublevel_change1} to be independent of the
particulars of the family $\F$.

Let $Q^n_{k,\nu}$ be the subset of $\mathrm{Sym}_n(\mathbb F)$, where
$\mathbb F$ is $\mathbb R$ or $\mathbb C$, consisting of the matrices
whose eigenvalue $\lambda_k$ has multiplicity $\nu$.  It is well-known
\cite{Arn_fap72} that the set $Q^n_{k,\nu}$ is a semialgebraic
submanifold of $\mathrm{Sym}_n$ of codimension\footnote{The reason for
  this codimension to be equal to $\dim\Sym_\nu(\mathbb{F}) - 1$ is as
  follows: Symmetric matrices with non-repeated eigenvalues can be
  encoded by their eigenvalues and unit eigenvectors. When an
  eigenvalue is repeated $\nu$ times, there is a loss of $\nu-1$
  parameters from the eigenvalues plus an extra freedom of choice of
  an orthonormal basis in the corresponding eigenspace.  Thus the
  desired codimension is equal to $\nu-1$ plus the dimension of the
  space of orthonormal bases of $\mathbb F^\nu$, adding up to
  $\dim\Sym_\nu(\mathbb{F}) - 1$.}
\begin{equation}
  \label{eq:codim_nu_again}
  s(\nu) :=
  \dim\Sym_\nu(\mathbb{F}) - 1 = 
  \begin{cases}
    \frac12\nu(\nu+1)-1, &\mathbb F=\mathbb R,\\
    \nu^2-1, & \mathbb F=\mathbb C.
  \end{cases}
\end{equation}
In particular, if $\nu>1$ (the eigenvalue $\lambda_k$ is not simple),
then $\codim \, Q^n_{k,\nu}\geq 2$, if $\mathbb F=\mathbb R$ and
$\codim \, Q^n_{k,\nu}\geq 3$, if $\mathbb F=\mathbb C$.  We remark
that we use real dimension in all (co)dimension calculations.

\begin{lemma}
  \label{lem:transversality_reduced_test}
  Let $\F: M \to \Sym_n$ be a smooth family whose eigenvalue
  $\lambda_k$ has multiplicity $\nu$ at the point $x\in M$ (i.e.\
  $\F(x) \in Q^n_{k,\nu}$).  Then $\F$ is  transverse  at $x$ in the
  sense of \eqref{eq:transversality_def} if and only if
  \begin{equation}
    \label{eq:transversality_big}
    \Ran d\F(x) +T_{\F(x)} Q^n_{k, \nu}= T_{\F(x)} \Sym_n
    \quad (\ \cong \Sym_n\ ).
  \end{equation}
\end{lemma}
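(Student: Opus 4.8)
The plan is to reduce the ``big'' transversality condition~\eqref{eq:transversality_big} in $\Sym_n$ to the ``reduced'' one~\eqref{eq:transversality_def} in $\Sym_\nu$ by understanding $T_{\F(x)}Q^n_{k,\nu}$ explicitly. First I would fix an isometry $\U:\FF^\nu\to\FF^n$ with $\Ran(\U)=\esp_k$ as in~\eqref{eq:restriction_def} and complete it to a full orthonormal basis, so that $\F(x)$ is block-diagonal with a scalar block $\lambda_k(x)I_\nu$ on the $\esp_k$-part and a block $\Lambda'$ (with $\lambda_k(x)\notin\spec\Lambda'$) on the complement $\esp_k^\perp$. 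The key computation is to identify the tangent space $T_{\F(x)}Q^n_{k,\nu}$: since $Q^n_{k,\nu}$ is the orbit-type stratum of matrices whose $k$-th eigenvalue has multiplicity exactly $\nu$, a curve $A(t)$ in $Q^n_{k,\nu}$ through $\F(x)$ can be written, after conjugation by a smooth family of unitaries/orthogonals $V(t)$ with $V(0)=I$, as $A(t)=V(t)\big(\mu(t)\Pi_\nu + C(t)\big)V(t)^*$ where $\Pi_\nu$ is the projector onto the first $\nu$ coordinates, $\mu(t)\in\R$, and $C(t)$ is block-diagonal supported on $\esp_k^\perp$. Differentiating at $t=0$ shows that
\begin{equation}
  \label{eq:tangent_Q}
  T_{\F(x)}Q^n_{k,\nu}
  = \big[\Sym_n\big]^{\mathrm{offdiag}} \oplus \mathcal{I}_\nu \oplus \big[\Sym(\esp_k^\perp)\big],
\end{equation}
i.e.\ it consists of all self-adjoint matrices whose $\esp_k\times\esp_k$ block is a multiple of $I_\nu$ (the off-diagonal blocks and the $\esp_k^\perp\times\esp_k^\perp$ block being unconstrained). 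In other words, the \emph{normal} directions to $Q^n_{k,\nu}$ at $\F(x)$ are exactly the matrices supported in the $\esp_k$-block and trace-free there, which is a copy of $\Sym_\nu/\mathcal{I}_\nu$ — consistent with the stated codimension $s(\nu)=\dim\Sym_\nu-1$.

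Granting~\eqref{eq:tangent_Q}, the equivalence is essentially linear algebra via the compression map. Observe that $v\mapsto(d\F(x)v)_{\esp_k}=\U^*(d\F(x)v)\U$ is precisely the composition of $d\F(x)$ with the orthogonal projection $\Sym_n\to\Sym_\nu$ onto the $\esp_k$-block; call this projection $\pi$. By~\eqref{eq:tangent_Q}, $\ker\pi$ plus $\mathcal{I}_\nu$ (viewed inside $\Sym_n$ as multiples of $\Pi_\nu$) is exactly $T_{\F(x)}Q^n_{k,\nu}$, and $\pi$ maps $T_{\F(x)}Q^n_{k,\nu}$ onto $\mathcal{I}_\nu\subset\Sym_\nu$. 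Therefore applying $\pi$ to~\eqref{eq:transversality_big}: the left side becomes $\pi(\Ran d\F(x)) + \pi(T_{\F(x)}Q^n_{k,\nu}) = \Ran\cH_x + \mathcal{I}_\nu$, and the right side becomes $\pi(\Sym_n)=\Sym_\nu$, which is exactly~\eqref{eq:transversality_def}. Conversely, if~\eqref{eq:transversality_def} holds, then any $X\in\Sym_n$ has $\pi(X)=\cH_x(v)+cI_\nu$ for some $v,c$; then $X - d\F(x)v - c\Pi_\nu$ lies in $\ker\pi\subset T_{\F(x)}Q^n_{k,\nu}$, so $X\in \Ran d\F(x)+T_{\F(x)}Q^n_{k,\nu}$, giving~\eqref{eq:transversality_big}. (One must check that $c\Pi_\nu\in T_{\F(x)}Q^n_{k,\nu}$, which is immediate from~\eqref{eq:tangent_Q}.)

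The main obstacle I anticipate is the rigorous justification of the tangent-space formula~\eqref{eq:tangent_Q}; everything after that is bookkeeping with the projection $\pi$. The cleanest route is probably to invoke the known structure of $Q^n_{k,\nu}$ as a smooth (semialgebraic) submanifold from \cite{Arn_fap72}: one can describe a neighborhood of $\F(x)$ in $Q^n_{k,\nu}$ via the local normal form ``conjugate a block-diagonal matrix with a scalar $\nu$-block'' and read off the tangent space by differentiating the parametrization $(\mu, C, V)\mapsto V(\mu\Pi_\nu + C)V^*$; the image of its differential at the base point is the right-hand side of~\eqref{eq:tangent_Q} since the conjugation by $V$ contributes precisely the off-diagonal block (the diagonal blocks of $[\Xi,\F(x)]$ vanish for skew-adjoint $\Xi$) and $(\mu,C)$ contributes $\mathcal{I}_\nu\oplus\Sym(\esp_k^\perp)$. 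Alternatively, since this dimension count already matches the codimension $s(\nu)$ asserted in the text, it suffices to exhibit $\Sym_\nu/\mathcal{I}_\nu$ worth of independent normal directions — namely trace-free matrices supported in the $\esp_k$-block — and check each is genuinely transverse to $Q^n_{k,\nu}$ (e.g.\ by noting that adding $\eps M$ with $M\in\Sym_\nu$ trace-free and $M\notin\mathcal{I}_\nu$ to the $\nu$-block splits the eigenvalue, decreasing the multiplicity below $\nu$). I would present whichever is shorter given what the paper has already set up.
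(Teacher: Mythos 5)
Your proof is correct and shares the essential structure of the paper's argument: both proceed by applying the compression map (your $\pi$, the paper's $h:\Sym_n\to\Sym_\nu$) and using the characterization $T_{\F(x)}Q^n_{k,\nu}=h^{-1}(\mathcal{I}_\nu)$, i.e.\ a direction is tangent to $Q^n_{k,\nu}$ if and only if its compression to the $\esp_k$-block is a multiple of the identity. Where you differ is in the two supporting steps. For the tangent-space characterization, the paper simply cites the Hellmann--Feynman theorem (tangent directions are those along which the $\nu$ coinciding eigenvalues split only at higher than first order, so their first-order slopes -- the eigenvalues of $h(A)$ -- must all be equal); you instead propose deriving it from the explicit local parametrization $(\mu,C,V)\mapsto V(\mu\Pi_\nu+C)V^*$, reading off that the commutator $[\Xi,\F(x)]$ has vanishing $\esp_k\times\esp_k$ block and, since $\Lambda'-\lambda_k(x)I$ is invertible, sweeps out all off-diagonal directions. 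This is a legitimate self-contained alternative and in fact makes the ``only if'' direction of the tangent-space claim more transparent than the Hellmann--Feynman citation does. For the converse implication in the lemma itself, you give a direct constructive argument (write $\pi(X)=\cH_x(v)+cI_\nu$, subtract $d\F(x)v+c\Pi_\nu$, and observe the remainder lies in $\ker\pi\subset T_{\F(x)}Q^n_{k,\nu}$), whereas the paper argues by contrapositive via a dimension count; both work, and your version is arguably cleaner, avoiding the auxiliary complement $G$ and the inequality bookkeeping.
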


\begin{remark}
  It is easy to see that when $\nu=1$, both conditions
  \eqref{eq:transversality_def} and \eqref{eq:transversality_big} are
  satisfied independently of $\F$.  When $\nu=n$, conditions
  \eqref{eq:transversality_def} and \eqref{eq:transversality_big}
  become identical.  The transversality condition in the case
  $\nu=n=2$ is illustrated in Figure~\ref{fig:PosCone_trans}.
\end{remark}

\begin{figure}
    \centering
    \includegraphics[scale=0.55]{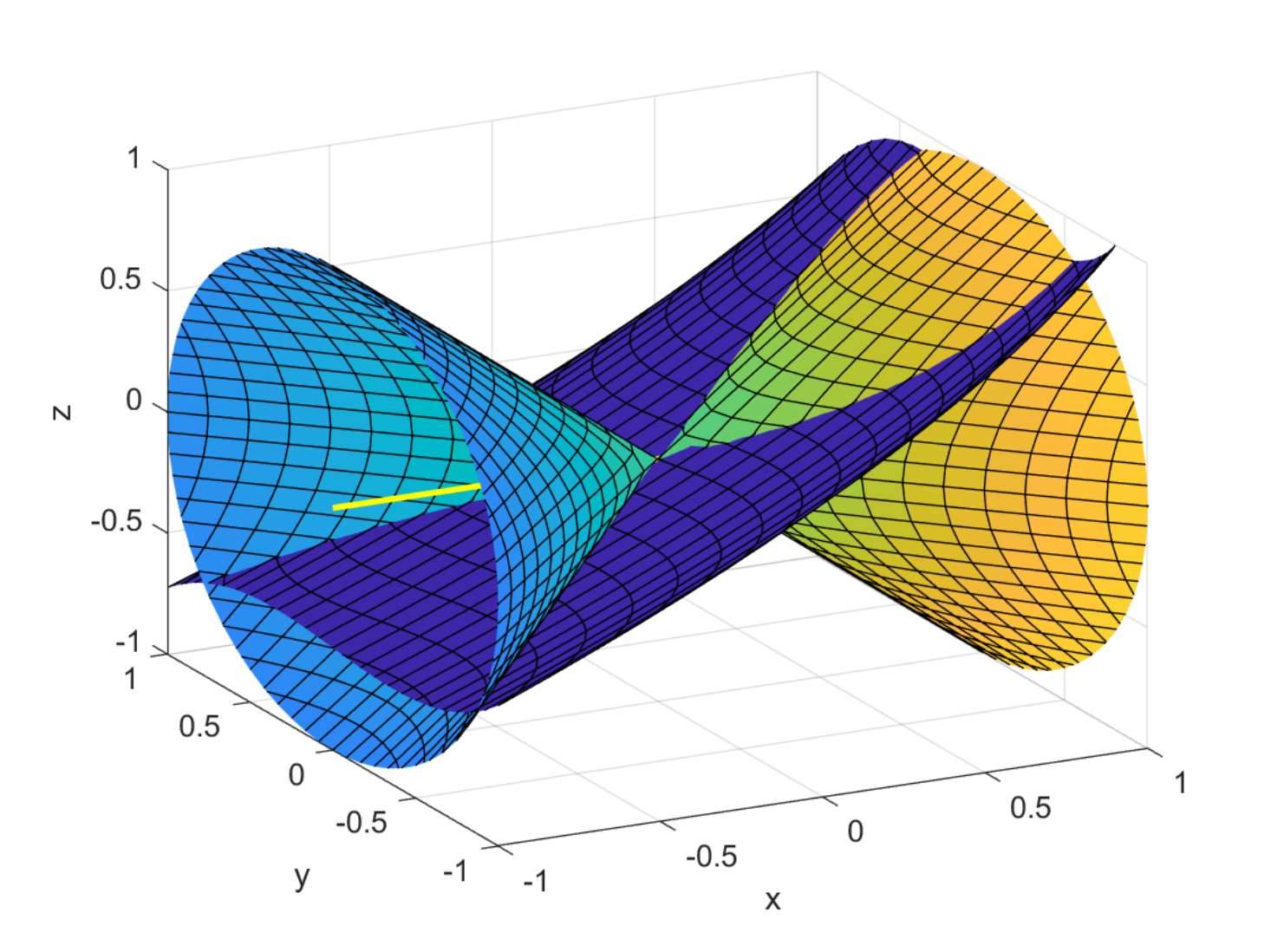}
    \includegraphics[scale=0.55]{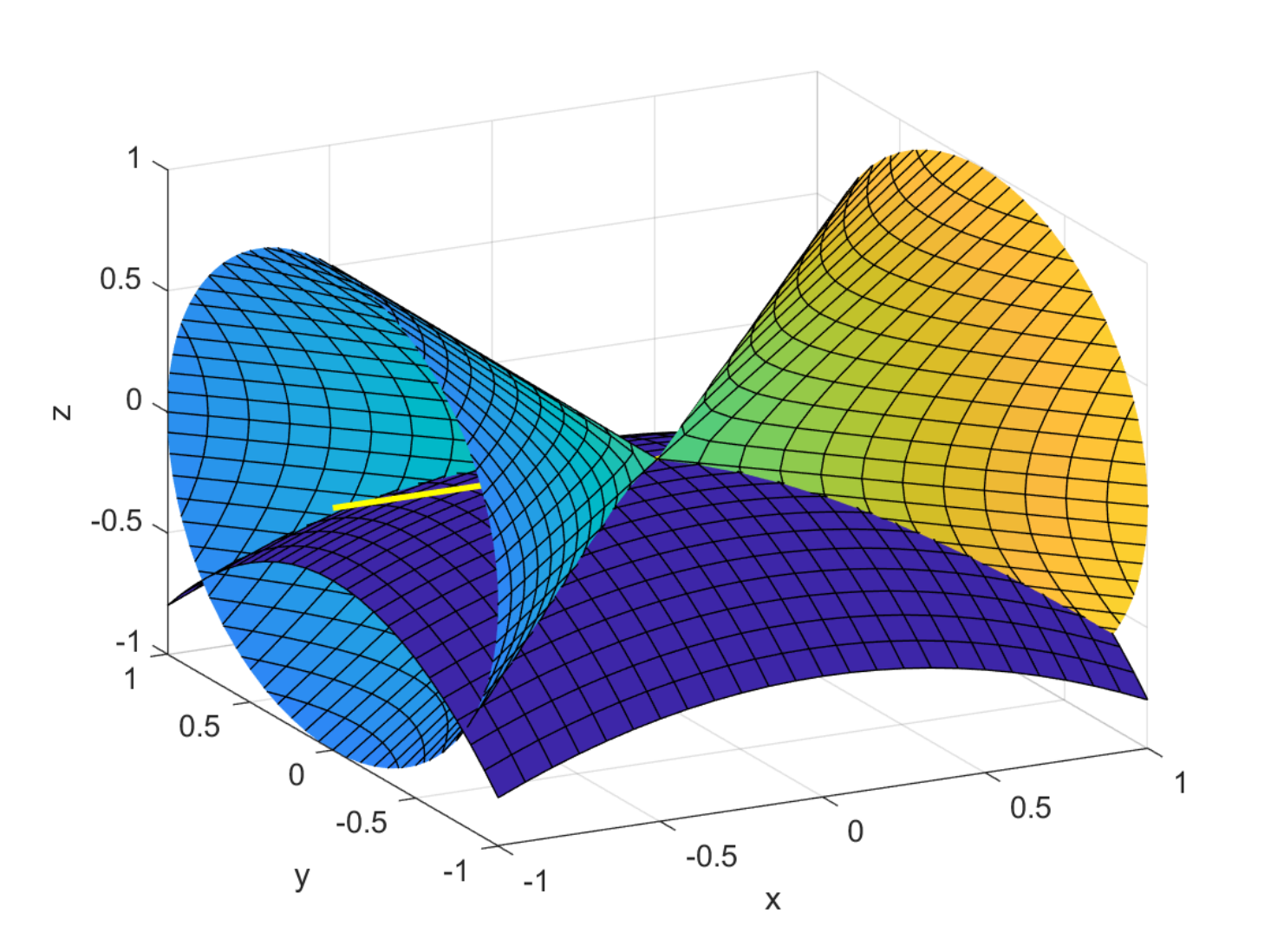}
    \caption{Examples of two families visualized in the 3-dimensional
      space parametrizing $\Sym_2(\R)$ via \eqref{eq:Sym2_param}.  The
      subspace $\mathcal{I}_2$ is drawn as a line.  The monochrome
      surfaces represent the images of the families $\F$, satisfying
      $\F(0)=0$.  The family on the left is transverse at $x=0$ and
      the family on the right is not.  The cones are drawn for
      comparison with Figure~\ref{fig:PosCone}.}
    \label{fig:PosCone_trans}
  \end{figure}

\begin{proof}[Proof of Lemma~\ref{lem:transversality_reduced_test}]
  Consider the linear mapping $h : \Sym_n \to \Sym_\nu$ acting as a
  compression to $\esp_k$.  Namely,
  $A \mapsto A_{\esp_k} = \U^* A \U$, where $\U$ is a linear isometry
  $\FF^\nu \to \esp_k$, see \eqref{eq:restriction_def}.  The mapping
  $h$ is onto: for any $B \in \Sym_\nu$, choosing
  $\widetilde B = \U B \U^* \in \Sym_n$ yields
  $h\big(\widetilde B\big) = \U^*\U B \U^*\U = I_\nu B I_\nu = B$.
  Furthermore, by Hellmann--Feynman theorem
  (Theorem~\ref{thm:hellmann-feynman}), $A \in T_{\F(x)} Q_{k,\nu}^n$
  if and only if $h(A) \in \mathcal{I}_\nu$ (informally, a direction
  is tangent to $Q_{k,\nu}^n$ if and only if the eigenvalues remain
  equal to first order).  Finally, by definition of $\cH_x$ we have
  $h\big(\Ran d\F(x)\big) = \Ran \cH_{x}$.

  Assuming condition~\eqref{eq:transversality_big} and applying to it
  the mapping $h$, we get
  \begin{equation*}
    \Sym_\nu = h\big(\Sym_n\big)
    =h\big(\Ran d\F(x)\big) + h\Big(T_{\F(x)}
    Q_{k,\nu}^n\Big)
    = \Ran \cH_{x} + \mathcal{I}_\nu,
  \end{equation*}
  establishing \eqref{eq:transversality_def}.  Conversely, assume
  $\mathcal{F}$ violates condition~\eqref{eq:transversality_big},
  meaning that
  \begin{equation*}
    \Ran d\F(x) + T_{\F(x)} Q^n_{k, \nu} = G + T_{\F(x)} Q^n_{k, \nu}
  \end{equation*}
  for some linear subspace $G$ of $\dim G < \codim T_{\F(x)} Q^n_{k,
    \nu} = s(\nu)$.  Applying $h$ to both sides we get
  \begin{equation*}
    \Ran \mathcal{H}_{x} + \mathcal{I}_\nu
    = h\left(\Ran d\F(x) + T_{\F(x)} Q^n_{k, \nu}\right)
    = h\left(G + T_{\F(x)} Q^n_{k, \nu}\right)
    = h(G) + \mathcal{I}_\nu.
  \end{equation*}
  Counting dimensions, we arrive to $\dim\left(\Ran \mathcal{H}_{x} +
    \mathcal{I}_\nu \right) < s(\nu) + 1 = \dim(\Sym_\nu)$, and therefore
  \eqref{eq:transversality_def} cannot hold.
\end{proof}

\begin{corollary}
  \label{cor:const_deg_manifold}
  ~
  \begin{enumerate}
  \item If $x$ satisfies non-degenerate criticality condition (N), see
    Definition~\ref{def:nondeg_crit_N}, then $\F$ is  transverse  at
    $x$.
  \item \label{item:S_well_def} If $\F$ is  transverse  at a point $x$,
    the constant multiplicity stratum $S$ of $x$ is a submanifold of
    $M$ of codimension $s(\nu)$ and the function $\lambda_k$ restricted
    to $S$ is smooth.
  \end{enumerate}
\end{corollary}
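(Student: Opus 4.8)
The plan is to treat the two assertions separately: the first reduces to a one-line linear-algebra computation, and the second follows from parametric transversality together with Lemma~\ref{lem:transversality_reduced_test}.

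For the first assertion, I would assume condition (N), so that $(\Ran\cH_x)^\perp=\Span\{B\}$ with $B$ positive definite. Then $\Ran\cH_x$ is a hyperplane in $\Sym_\nu$, so the transversality condition~\eqref{eq:transversality_def}, namely $\mathcal{I}_\nu+\Ran\cH_x=\Sym_\nu$, holds as soon as $I_\nu\notin\Ran\cH_x$. This in turn is equivalent to $\langle B,I_\nu\rangle\neq 0$, and indeed $\langle B,I_\nu\rangle=\Tr B>0$ since $B$ is positive definite; this is also the special case $X=I_\nu\in\Sym_\nu^{++}$, $Y=B\in\Sym_\nu^{+}\setminus\{0\}$ of the inequality recorded just before Theorem~\ref{thm:regular}. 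That finishes the first assertion.

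For the second assertion, suppose $\F$ is transverse at $x$. By Lemma~\ref{lem:transversality_reduced_test} this is equivalent to $\Ran d\F(x)+T_{\F(x)}Q^n_{k,\nu}=\Sym_n$, i.e.\ $\F$ is transverse (in the usual differential-topological sense) to the submanifold $Q^n_{k,\nu}\subset\Sym_n$ at $x$, where by Arnold's theorem \cite{Arn_fap72} $Q^n_{k,\nu}$ is a locally closed semialgebraic submanifold of codimension $s(\nu)$. Writing $Q^n_{k,\nu}$ near $\F(x)$ as the zero set of a submersion $\pi$, transversality of $\F$ at $x$ is exactly surjectivity of $d(\pi\circ\F)(x)$, which is an open condition; hence there is a neighborhood $U$ of $x$ on which $\F$ is transverse to $Q^n_{k,\nu}$ at every point of $\F^{-1}(Q^n_{k,\nu})\cap U$. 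The preimage theorem (e.g.\ \cite{Hirsch94}) then shows that $S:=\F^{-1}(Q^n_{k,\nu})\cap U$ is a smooth embedded submanifold of $U$ of codimension $s(\nu)$. By the defining property of $Q^n_{k,\nu}$, the set $S$ consists precisely of those $y\in U$ at which $\lambda_k$ has multiplicity $\nu$, so it is the local constant multiplicity stratum attached to $x$. Finally, since the multiplicity of $\lambda_k$ is constantly $\nu$ along $S$, the restriction $\lambda_k|_S$ is smooth by classical perturbation theory \cite{Kato}, applied as in the Introduction to the submanifold $S$.

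I expect the only place requiring care to be the bookkeeping in the second assertion: verifying that the transversality hypothesis, stated pointwise at $x$, propagates to a full neighborhood so that the preimage is a genuine submanifold, and that this preimage coincides locally with the constant-multiplicity stratum rather than with some larger or singular set. Once the structure of $Q^n_{k,\nu}$ from \cite{Arn_fap72} and the translation furnished by Lemma~\ref{lem:transversality_reduced_test} are in hand, the remaining steps are routine.
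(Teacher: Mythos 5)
Your proof is correct and follows essentially the same route as the paper. Part (1) is the paper's argument verbatim: under (N), $\Ran\cH_x$ is a hyperplane and $I_\nu\notin\Ran\cH_x$ since the identity cannot be orthogonal to the positive definite spanning vector $B$. Part (2) likewise matches: Lemma~\ref{lem:transversality_reduced_test} translates condition~\eqref{eq:transversality_def} into transversality of $\F$ with the stratum $Q^n_{k,\nu}\subset\Sym_n$ (of codimension $s(\nu)$ by Arnold), the preimage theorem gives the submanifold $S$, and smoothness of $\lambda_k|_S$ comes from the Riesz projector argument in Kato. Your remark that pointwise transversality at $x$ must be propagated to a neighborhood is a detail the paper elides but is correctly handled.
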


\begin{proof}
  Recall that non-degenerate criticality condition (N) states that
  $\left(\Ran\cH_{x}\right)^\perp$ is spanned by a positive definite
  matrix.  In particular, the codimension of $\Ran \cH_{x}$ is 1.
  Furthermore, the identity matrix $I_\nu$ is not in $\Ran \cH_{x}$
  because the identity cannot be orthogonal to a positive definite
  matrix.  Therefore condition \eqref{eq:transversality_def} holds.

  Now let $\F$ be transverse  at $x$ and
  let $\nu=\nu(x)$ be the multiplicity of the eigenvalue $\lambda_k$
  at $x$.  Then $S$ is the connected component of
  $\F^{-1}\left(Q^n_{k, \nu}\right)$ containing $x$.  Transversality
  implies $S$ is a submanifold of codimension
  $\codim Q^n_{k, \nu} = s(\nu)$.  The smoothness of $\lambda_k$
  restricted to $S$ is a standard result of perturbation theory for
  linear operators (see, for example, \cite[Section II.1.4 or Theorem
  II.5.4]{Kato}).  To see it, one uses the Cauchy integral formula for
  the \term{total eigenprojector} (or \term{Riesz projector}),
  i.e. the projector onto the span of eigenspaces of the eigenvalues
  lying in a small neighborhood around $\lambda_k(x)$.  It
    follows that the total eigenprojector is smooth in a sufficiently
    small neighborhood of $x\in M$ (the neighborhood on $M$ needs to
    be small enough so that no eigenvalues cross the contour of
    integration).  Once restricted to $y\in S$, the eigenprojector is
  simply $\lambda_k(y) I_\nu$, therefore $\lambda_k(y)$ is also
  smooth.
\end{proof}

\begin{lemma}
  \label{lem:precriticality_condition}
  If $x$ satisfies non-degenerate criticality condition (N), then
  $\Ran \mathcal{H}_{x}\big|_{T_{x}S} = 0$ and $x$ is a critical point
  of the locally smooth function $\lambda_k\big|_S$.
\end{lemma}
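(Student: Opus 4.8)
The plan is to exploit the compression map $h:\Sym_n \to \Sym_\nu$, $A\mapsto A_{\esp_k}=\U^*A\U$, introduced in the proof of Lemma~\ref{lem:transversality_reduced_test}, together with the Hellmann--Feynman characterization of $T_{\F(x)}Q^n_{k,\nu}$. First I would recall the key facts established there: $h$ is surjective, $\cH_x = h\circ d\F(x)$, and $A\in T_{\F(x)}Q^n_{k,\nu}$ if and only if $h(A)\in\mathcal{I}_\nu$. Since condition (N) implies $\F$ is transverse at $x$ (Corollary~\ref{cor:const_deg_manifold}(1)), the constant multiplicity stratum $S$ is a genuine submanifold with $T_x S = \bigl(d\F(x)\bigr)^{-1}\bigl(T_{\F(x)}Q^n_{k,\nu}\bigr)$, hence $v\in T_x S$ forces $h\bigl(d\F(x)v\bigr) = \cH_x(v)\in\mathcal{I}_\nu$. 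Thus $\Ran\bigl(\cH_x\big|_{T_xS}\bigr)\subseteq\mathcal{I}_\nu$, and it remains to show this restricted range is in fact $0$.

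The main point is to rule out nonzero multiples of the identity. Under condition (N), $(\Ran\cH_x)^\perp = \Span\{B\}$ with $B\in\Sym_\nu^{++}$; equivalently, $\Ran\cH_x = B^\perp = \{X\in\Sym_\nu : \Tr(BX)=0\}$. If some $v\in T_xS$ had $\cH_x(v) = cI_\nu$ with $c\neq 0$, then $cI_\nu\in\Ran\cH_x$, forcing $\Tr(B\cdot I_\nu) = \Tr B = 0$; but $B$ positive definite gives $\Tr B>0$, a contradiction. Hence $\cH_x(v)=0$ for all $v\in T_xS$, which is the first assertion $\Ran\mathcal{H}_x\big|_{T_xS}=0$.

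For the second assertion, that $x$ is a critical point of the smooth function $\lambda_k\big|_S$ (smoothness being guaranteed by Corollary~\ref{cor:const_deg_manifold}(\ref{item:S_well_def})), I would invoke the Hellmann--Feynman theorem (Theorem~\ref{thm:hellmann-feynman}) in the form already used in Remark~\ref{rem:simple_eig}: along a curve in $S$ through $x$ with velocity $v\in T_xS$, the eigenvalue $\lambda_k$ stays exactly $\nu$-fold, and its derivative is the common eigenvalue of the compression $\bigl(d\F(x)v\bigr)_{\esp_k} = \cH_x(v)$. Concretely, for $y\in S$ near $x$ the total eigenprojector restricted to $S$ equals $\lambda_k(y)P_{\esp_k(y)}$, and differentiating and compressing back to $\esp_k(x)$ yields $d\bigl(\lambda_k\big|_S\bigr)(x)v = $ the (unique) eigenvalue of $\cH_x(v)$, which we just showed is $0$. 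Therefore $d\bigl(\lambda_k\big|_S\bigr)(x)=0$, as claimed.

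The only genuinely delicate point is the clean justification of the differential formula $d\bigl(\lambda_k\big|_S\bigr)(x)v = \lambda\bigl(\cH_x(v)\bigr)$ — i.e.\ making precise that the first-order motion of the (repeated) eigenvalue along $S$ is read off from the compression — which is exactly the content of the Hellmann--Feynman theorem as stated in the appendix; everything else is linear algebra with the trace pairing. I expect the write-up to be short, essentially combining the surjectivity/kernel bookkeeping for $h$ with the strict positivity $\Tr B>0$.
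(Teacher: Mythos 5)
Your proposal is correct and takes essentially the same route as the paper: both reduce to showing $\cH_x(v)\in\mathcal{I}_\nu$ for $v\in T_xS$ (the paper via the ``equal slopes'' reading of Hellmann--Feynman, you via transversality and the fact that $T_{\F(x)}Q^n_{k,\nu}=h^{-1}(\mathcal{I}_\nu)$, which the paper establishes from the same Hellmann--Feynman formula), and then both kill nonzero multiples of $I_\nu$ by pairing against the positive definite $B$ with $\Tr B>0$.
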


\begin{proof}
  By Hellmann--Feynman theorem, see
  Appendix~\ref{sec:hellmann_feynman}, the eigenvalues of
  $\cH_{x}v\in \Sym_\nu$ give the slopes of the eigenvalues
  splitting off from the multiple eigenvalue
  $\lambda_k\big(\mathcal{F}(x)\big)$ when we leave $x$ in the
  direction $v$.  Leaving in the direction $v \in T_{x} S$, where $S$
  is the constant multiplicity stratum attached to $x$, must produce
  equal slopes, i.e.\ $\cH_x(v)$ is a multiple of the identity matrix
  $I_\nu$ for every $v\in T_{x}S$.  But a non-zero multiple of the
  identity cannot be orthogonal to $B\in\Sym^{++}_\nu$, therefore
  $\Ran \mathcal{H}_{x}\big|_{T_{x}S} = 0$.  In other words, the
  slopes of the branches splitting off from the multiple eigenvalue
  $\lambda_k(x)$ are all zero.
\end{proof}

\begin{proof}[Proof of Proposition~\ref{prop:const_mult_stratum}]
  \label{proof:prop}
  Corollary~\ref{cor:const_deg_manifold} and
  Lemma~\ref{lem:precriticality_condition}, combined, give the
  conclusions of Proposition~\ref{prop:const_mult_stratum}.
\end{proof}

The next step is to enable ourselves to focus on the directions
transverse to $S$.

\begin{corollary}
  \label{cor:transversal_cut_of_CP}
  Let $\F: M \to \Sym_n$ be a smooth family whose eigenvalue
  $\lambda_k$ satisfies conditions (N) and (S)
  (Definitions~\ref{def:nondeg_crit_N} and \ref{def:nondeg_crit_S}) at
  the point $x\in M$.  Let $S$ be the constant multiplicity stratum at
  $x$ and let $N$ be a submanifold of $M$ of dimension
  $\dim N = \codim_M S = s(\nu)$ which intersects $S$ transversely at
  $x$.

  Then the eigenvalue $\lambda_k$ of the restriction $\F \big|_N$ also
  satisfies conditions (N) and (S).
\end{corollary}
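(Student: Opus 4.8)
The plan is to verify conditions (N) and (S) for $\F\big|_N$ directly from the definitions, using the transversality of $N$ and $S$ at $x$ together with what we already know about $\F$. Write $\esp_k$ for the eigenspace of $\lambda_k$ at $x$; note that since $N$ and $S$ share the point $x$, the family $\F\big|_N$ also has eigenvalue $\lambda_k$ of multiplicity $\nu$ there, with the same eigenspace $\esp_k$. The compression operator for the restricted family is simply $\cH_x$ composed with the inclusion $T_xN \hookrightarrow T_xM$; that is, $\cH_x^N = \cH_x\big|_{T_xN}: T_xN \to \Sym_\nu$. So the whole argument reduces to understanding $\Ran\big(\cH_x\big|_{T_xN}\big)$.

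The key step is the claim that $\Ran\big(\cH_x\big|_{T_xN}\big) = \Ran\cH_x$. To see this, recall from Lemma~\ref{lem:precriticality_condition} that condition (N) forces $\cH_x\big|_{T_xS} = 0$. By the decomposition $T_xM = T_xS \oplus T_xN$ coming from transversality (dimensions add up: $\dim T_xN = s(\nu) = \codim_M S$), every $v\in T_xM$ splits as $v = v_S + v_N$ and $\cH_x v = \cH_x v_N$. Hence $\Ran\cH_x = \cH_x(T_xM) = \cH_x(T_xN) = \Ran\big(\cH_x\big|_{T_xN}\big)$, as claimed. Consequently $\big(\Ran\cH_x^N\big)^\perp = \big(\Ran\cH_x\big)^\perp = \Span\{B\}$ with $B\in\Sym_\nu^{++}$, so condition (N) holds for $\F\big|_N$. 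For condition (S), apply Proposition~\ref{prop:const_mult_stratum} to $\F\big|_N$: its local constant multiplicity stratum $S'$ has codimension $s(\nu)$ in $N$, i.e.\ dimension $\dim N - s(\nu) = 0$, so $S' = \{x\}$ is a single point. A non-degenerate critical point of a function on a zero-dimensional manifold is vacuous, so condition (S) for $\F\big|_N$ holds automatically.

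The main obstacle to watch for is making sure the constant multiplicity stratum of the restricted family really is $S\cap N$ locally, and that this is precisely the point $\{x\}$; this is where transversality of $S$ and $N$ is essential --- it guarantees $\dim(S\cap N) = \dim S + \dim N - \dim M = 0$ near $x$, so no stray multiplicity points of $\F\big|_N$ accumulate at $x$. (One should also confirm $S\cap N$ is a manifold near $x$, which again follows from transversality, but since it is zero-dimensional this is immediate.) Everything else is bookkeeping with the direct sum decomposition of $T_xM$ and the fact, observed just before the statement, that the compression to $\esp_k$ intertwines differentials of $\F$ and of $\F\big|_N$.
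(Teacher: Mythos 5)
Your proposal is correct and follows essentially the same route as the paper: both argue that transversality makes the constant multiplicity stratum of $\F\big|_N$ the single point $\{x\}$ (so (S) is vacuous), and both invoke Lemma~\ref{lem:precriticality_condition} to get $\cH_x\big|_{T_xS}=0$ and hence $\Ran\cH_x=\Ran\bigl(\cH_x\big|_{T_xN}\bigr)$, so (N) is inherited. Your version is slightly more explicit about the direct-sum decomposition $T_xM=T_xS\oplus T_xN$, but the content is identical.
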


\begin{proof}
  Transversality and dimension count imply that the constant
  multiplicity stratum of $\F\big|_N$ is the isolated point $x$.
  Therefore condition (S) for $\F\big|_N$ is vacuously true at $x$.

  Condition (N) for $\F$, combined with 
  Lemma~\ref{lem:precriticality_condition}, yields
  $\Ran\mathcal{H}_{x}\big|_{T_{x}S} = 0$.  We thus obtain
  \begin{equation}
    \label{eq:reduced_ranH}
    \Ran \mathcal{H}_{x}
    = \Ran \mathcal{H}_{x}\big|_{T_{x}N}
    + \Ran \mathcal{H}_{x}\big|_{T_{x}S}
    = \Ran \mathcal{H}_{x}\big|_{T_{x}N}.
  \end{equation}
  In other words, the space $\Ran \mathcal{H}_{x}$ remains unchanged after
  restricting $\F$ to $N$, therefore condition (N) holds for $\F\big|_N$.
\end{proof}

The next step is to separate the Morse data at a critical point $x$
into a smooth part (along $S$) and a  transverse   part (along $N$).
For this purpose we will use the stratified Morse theory of Goresky
and MacPherson \cite{GM88}.  We now show that a point satisfying
non-degenerate criticality conditions (N) and (S) is nondepraved in
the sense of \cite[definition in Sec.~I.2.3]{GM88}.  The setting of
\cite{GM88} calls for a smooth function on a certain manifold which is
then restricted to a stratified subspace of that manifold.  To that
end we consider the graph of the function $\lambda_k$ on $M$, i.e.\
the set $Z_k := \big\{\big(x, \lambda_k(x)\big)\colon x\in M\big\}$ as
a stratified subspace of $\tM := M\times \R$ and the (smooth) function
$\pi: \tM \to \R$ which is the projection to the second component of
$\tM$.  As before, the stratification (both on $M$ and on $Z_k$) is
induced by the multiplicity of the eigenvalue $\lambda_k(x)$.

Recall that a subspace $Q$ of $T_z \tM$ is called a
\term{generalized tangent space} to a stratified subspace
$Z\subset \tM$ at the point $z\in Z$, if there exists a stratum
$\mathcal R$ of $Z$ with $z\in \overline{\mathcal R}$, and a sequence
of points $\{z_i\} \subset \mathcal R$ converging to $z$ such that
\begin{equation}
  \label{gentangent}
  Q = \lim_{i\to\infty} T_{z_i} \mathcal{R}.
\end{equation}

\begin{proposition}
  \label{isoprop} 
  Let the family $\F: M \to \Sym_n$ be  transverse  and the point
  $x\in M$ satisfy conditions of Theorem~\ref{thm:critical}.  Let
  $z := \big(x, \lambda_k(x)\big)$ be the corresponding point on the
  stratified subspace $Z_k \subset \tM$ defined above and
  $\widetilde{S}$ be the stratum of $Z_k$ containing $z$.  Then the
  following two statements hold:
  \begin{enumerate}
  \item \label{item:generalized_subspace} For each generalized tangent
    space $Q$ at $z$ we have $d\pi(z)\big|_Q \neq 0$ except when
    $Q=T_{z}\widetilde S$.
  \item \label{item:isolated_point} $x$ is isolated in the set of
    all points $y$ that are critical for $\lambda_k\big|_{S_y}$,
    where $S_y$ is the constant multiplicity stratum attached to $y$.
  \end{enumerate}
\end{proposition}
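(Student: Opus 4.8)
## Proof proposal for Proposition~\ref{isoprop}

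The plan is to treat the two statements separately, with statement (\ref{item:isolated_point}) being essentially a restatement of the discreteness assertion already embedded in Theorem~\ref{thm:critical}(\ref{item:criticality}), and statement (\ref{item:generalized_subspace}) being the genuinely new content that encodes nondepravity. I will begin by identifying the strata of $Z_k$ near $z$ with the strata of $M$ near $x$ via the graph map $x \mapsto (x,\lambda_k(x))$, which is a homeomorphism onto $Z_k$ and, restricted to each constant-multiplicity stratum, a diffeomorphism (since $\lambda_k$ is smooth there by Corollary~\ref{cor:const_deg_manifold}(\ref{item:S_well_def})). Under this identification $\widetilde S$ corresponds to the local constant multiplicity stratum $S$ attached to $x$, and a generalized tangent space $Q$ at $z$ corresponds to a limit $\lim_i T_{y_i}\big(\mathrm{graph}\,\lambda_k|_{S'}\big)$ for some stratum $S'$ of $M$ with $x \in \overline{S'}$ and $y_i \in S'$, $y_i \to x$.

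For statement (\ref{item:generalized_subspace}): the condition $d\pi(z)|_Q = 0$ means $Q$ projects into the horizontal $T_x M \times \{0\}$, i.e.\ $Q$ is the graph over some subspace $W \subseteq T_x M$ of the zero map, which happens precisely when $\lim_i d(\lambda_k|_{S'})(y_i) = 0$ as a covector on $W = \lim_i T_{y_i} S'$. The two cases to rule out are $S' = S$ (the stratum of the same multiplicity) with $W \subsetneq T_x S$, and $S'$ a stratum of \emph{different} multiplicity $\nu'$ adjacent to $x$. In the first case, $\lambda_k|_S$ is smooth and has a \emph{non-degenerate} critical point at $x$ by condition (S); non-degeneracy means the differential $d(\lambda_k|_S)$ vanishes only at $x$ among nearby points of $S$, so $d(\lambda_k|_S)(y_i) \neq 0$ for $y_i \neq x$, and more is true: along any sequence $y_i \to x$ in $S$ the normalized differentials have a non-trivial limit on $T_x S$, which cannot vanish identically on a proper subspace $W$ only if that subspace is all of $T_x S$ — this is where I must be careful, so let me instead argue via the stratified Morse setup directly: the point is that $(\lambda_k|_S, x)$ non-degenerate forces the generalized tangent space coming from $S$ itself to be exactly $T_z \widetilde S$ (no other limit occurs since $S$ is a manifold through $x$), so the only "dangerous" generalized tangent space from the $S$-stratum is the excluded one.

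The harder sub-case, and what I expect to be the main obstacle, is ruling out $d\pi|_Q = 0$ when $Q$ comes from an adjacent stratum $S'$ of strictly larger codimension (smaller or larger multiplicity). Here I would invoke transversality of $\F$ together with the geometry of $Q^n_{k,\nu'}$: since $\F$ is transverse, $\F^{-1}(Q^n_{k,\nu'})$ is a submanifold $S'$ near $x$, and by the Hellmann--Feynman description (Appendix~\ref{sec:hellmann_feynman}) and Lemma~\ref{lem:transversality_reduced_test}, the differential of $\lambda_k$ along $S'$ is governed by $\cH_x$ evaluated on $T_x S'$ composed with the trace functional on $\Sym_{\nu'}$. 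I would show that if this differential tended to zero along some $y_i \to x$ in $S'$, then in the limit one would obtain a matrix in $(\Ran \cH_x)^\perp$ that is \emph{positive semidefinite but not definite} (it is the limit of the averaged projection onto the splitting eigenspaces, hence a rank-deficient PSD matrix of trace one), contradicting condition (N) which forces $(\Ran \cH_x)^\perp = \Span\{B\}$ with $B$ strictly positive \emph{definite} — a definite matrix cannot be a limit of the relevant degenerate configurations. Concretely: the relevant obstruction covector being zero on $W = \lim T_{y_i} S'$ translates, via the compression map $h$ of Lemma~\ref{lem:transversality_reduced_test}, into the existence of a nonzero PSD matrix of rank $< \nu$ orthogonal to $\Ran\cH_x$, i.e.\ an element of $(\Ran\cH_x)^\perp \cap \Sym_\nu^+$ which is not a positive multiple of the identity, directly contradicting (N).

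For statement (\ref{item:isolated_point}): the set of $y$ that are critical for $\lambda_k|_{S_y}$ is exactly the set where conditions (N) and (S) can be checked, and by Theorem~\ref{thm:critical}(\ref{item:criticality}) the set of points satisfying (N) and (S) is discrete; but I must also handle points $y$ near $x$ that satisfy (N) but fail (S), or lie on lower strata. Since $\F$ is transverse near $x$, for $y$ in a small neighborhood the multiplicity $\nu(y) \le \nu(x)$, and on the top stratum $S$ the critical points of $\lambda_k|_S$ are isolated by non-degeneracy (condition (S)); on each lower-dimensional stratum $S'$, the argument of the previous paragraph shows $d(\lambda_k|_{S'})$ does not vanish near $x$ at all (its vanishing would again produce a forbidden PSD matrix in $(\Ran\cH_x)^\perp$ by an openness/continuity argument using that condition (N) is an open condition). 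Hence no point $y \neq x$ close to $x$ is critical for $\lambda_k|_{S_y}$, giving the isolation. I would organize this last part as a short corollary of the estimates assembled for part (\ref{item:generalized_subspace}), noting that continuity of $\cH$ and openness of condition (N) let us transport the strict positive-definiteness of $B$ to a full neighborhood.
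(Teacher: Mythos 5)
Your proposal has the right skeleton and correctly identifies the key ingredients (Hellmann--Feynman, the compression $\cH_x$, the contradiction with condition (N) via positive definiteness of $B$), but the crucial step of part~(\ref{item:generalized_subspace}) is left as a gap that is precisely where the work lies. You claim that the limiting argument ``directly'' produces a rank-deficient positive semidefinite matrix in $(\Ran\cH_x)^\perp$. What the limiting Hellmann--Feynman computation actually gives you is that, for the limiting subspace $V=\Ran(\U^*\U_{\mathcal R})$ of dimension $\nuR < \nu$ and $W_0 = dp(z)Q$, one has $\Ran\cH_x\big|_{W_0} \subset \Sym_\nu(V,V^\perp)$, and hence any matrix supported on $V$ (such as the projector $P_V$) is orthogonal to $\Ran\cH_x\big|_{W_0}$. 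But orthogonality to $\Ran\cH_x\big|_{W_0}$ is strictly weaker than orthogonality to $\Ran\cH_x$, and it is only membership in $(\Ran\cH_x)^\perp$ that contradicts (N). Bridging this gap is exactly what the paper's proof does, and it requires two ingredients you do not mention: (i) verifying Whitney condition~A for the eigenvalue stratification (which the paper does by citing Whitney's theorem for the algebraic discriminant variety and pulling back along the transverse map $\F$), so as to conclude $T_x S \subset W_0$, and then (ii) a rank-nullity dimension count showing $\dim\Ran\cH_x\big|_{W_0} = s(\nu)-s(\nuR) = \dim\Sym_\nu(V,V^\perp)$, hence the inclusion is an equality. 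Only then can one take orthogonal complements to get $(\Ran\cH_x)^\perp \subset \Sym_\nu(V^\perp, 0)$, which forces $V^\perp \subset \Ker B$ and contradicts definiteness of $B$. (Note that the paper's contradiction is that $B$ \emph{itself} is rank-deficient, not that a new rank-deficient matrix joins it in $(\Ran\cH_x)^\perp$.)

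Two smaller remarks. First, for part~(\ref{item:isolated_point}) your opening appeal to the discreteness stated in Theorem~\ref{thm:critical}(\ref{item:criticality}) is circular, since that discreteness is a consequence of this Proposition (via the stratified Morse machinery); your subsequent direct argument is sound in spirit and in fact coincides with the paper's, which takes a sequence of critical points $x_i \to x$, passes to a subsequence lying in a single stratum with converging tangent planes $T_{z_i}\mathcal R \to Q$, and observes $d\pi(z)|_Q = 0$, contradicting part~(\ref{item:generalized_subspace}). Second, near a transverse point only strata of multiplicity $\le \nu(x)$ occur, so the parenthetical ``(smaller or larger multiplicity)'' should read smaller.
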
 

\begin{remark}
  \label{rem:third_condition_nondepraved}
  A point $z := \big(x, \lambda_k(x)\big)$ satisfying conditions
  (\ref{item:generalized_subspace})-(\ref{item:isolated_point}) of
  Proposition~\ref{isoprop} and such that $x$ is non-degenerate as a
  smooth critical point of $\lambda_k\big|_S$ is a nondepraved point
  of the map $\pi|_{Z_k}$ in the sense of Goresky--MacPherson
  \cite[Sec.~I.2.3]{GM88}.\footnote{The definition of a
      \term{nondepraved point} in \cite[Sec.~I.2.3]{GM88} contains
      three conditions.  Conditions (c) and (a) of
      \cite[Sec.~I.2.3]{GM88} correspond to parts
      (\ref{item:generalized_subspace}) and
      (\ref{item:isolated_point}) of Proposition~\ref{isoprop},
      respectively.  The third condition --- condition (b) of
      \cite[Sec.~I.2.3]{GM88} --- holds automatically in our case
      because $x$ is non-degenerate as a smooth critical point of
      $\lambda_k\big|_S$, by condition (S) assumed in
      Theorem~\ref{thm:critical}).  Thus we omit here the general
      description of condition (b), which is rather
      technical.}$^{,}$\footnote{We also mention that \cite{GM88} uses the
    term ``critical'' for the points $y$ that are critical when the
    function in question is restricted to their respective stratum of constant multiplicity.}
\end{remark}

\begin{corollary}
	\label{isopropcor} 
	Let the family $\F: M \to \Sym_n$ be transverse
         and the point $x\in M$ satisfy
        conditions of item (1) of Theorem~\ref{thm:critical}.
        Let $z := \big(x, \lambda_k(x)\big)$ be the corresponding
        point on the stratified subspace $Z_k \subset \tM$ defined
        above and $\pi: M\times \R \to \R$ be the projection to the
        second component of $\tM$. Then $z$ is a nondepraved point of
        the map $\pi|_{Z_k}$ in the sense of Goresky--MacPherson
        \cite[Sec.~I.2.3]{GM88}.
\end{corollary}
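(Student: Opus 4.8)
The plan is to obtain this corollary by simply assembling Proposition~\ref{isoprop} with the observation already recorded in Remark~\ref{rem:third_condition_nondepraved}. Recall that the definition of a nondepraved point in \cite[Sec.~I.2.3]{GM88} comprises three conditions, which I label (a), (b), (c) following \cite{GM88}: (a) the point is isolated among the points that are critical for the restriction of the function to their stratum; (c) the differential $d\pi(z)$ is nonzero on every generalized tangent space at $z$ other than $T_z\widetilde S$; and (b) a more technical condition controlling the behavior of the function along the stratum $\widetilde S$.

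First I would unwind the hypotheses. The conditions of item~(1) of Theorem~\ref{thm:critical} are precisely the non-degenerate criticality conditions (N) and (S), and by Corollary~\ref{cor:const_deg_manifold}(1) condition (N) already forces $\F$ to be transverse at $x$; hence the hypotheses of Proposition~\ref{isoprop} are met. Applying that proposition yields part~(\ref{item:generalized_subspace}), which is exactly condition (c), and part~(\ref{item:isolated_point}), which is exactly condition (a) --- here one also invokes, as noted in the footnotes, that \cite{GM88} calls ``critical'' precisely those points that are critical for the restriction to their constant-multiplicity stratum, so that the two usages of the word ``critical'' agree.

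It then remains to check condition (b), and this is the one place where condition (S) enters on its own: since (S) asserts that $x$ is a \emph{non-degenerate} critical point of the smooth function $\lambda_k\big|_S$, equivalently that $\pi\big|_{\widetilde S}$ has a non-degenerate Hessian at $z$, condition (b) of \cite[Sec.~I.2.3]{GM88} --- which is itself a non-degeneracy requirement on the restriction of $\pi$ to the stratum --- holds automatically, as already indicated in the footnote to Remark~\ref{rem:third_condition_nondepraved}. Combining (a), (b) and (c) shows that $z$ is a nondepraved point of $\pi\big|_{Z_k}$, which is the claim. There is essentially no obstacle here: the corollary is pure bookkeeping, its entire mathematical content residing in Proposition~\ref{isoprop}; the only mild care needed is the faithful translation between the ``smooth non-degeneracy'' of condition (S) and condition (b) of Goresky--MacPherson, which is immediate once one writes out the definitions.
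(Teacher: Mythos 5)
Your proof is correct and follows exactly the route the paper takes: the corollary is never given an explicit proof but is presented as the direct combination of Proposition~\ref{isoprop} (furnishing Goresky--MacPherson conditions (c) and (a)) with the observation in the footnote to Remark~\ref{rem:third_condition_nondepraved} that condition (b) follows from the smooth non-degeneracy asserted by condition (S). Your additional remark that transversality is already implied by condition (N) via Corollary~\ref{cor:const_deg_manifold}(1) is accurate, though strictly redundant since the corollary's hypotheses also list transversality explicitly.
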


\begin{remark}
  Let us discuss informally the idea behind
  part~(\ref{item:generalized_subspace}) of Proposition~\ref{isoprop},
  the proof of which is fairly technical.  When we leave $x$ in the
  direction not tangent to $S$, the multiplicity of eigenvalue
  $\lambda_k$ is reduced as other eigenvalues split off.
  Part~(\ref{item:generalized_subspace}) stipulates that among the
  directions in which the multiplicity splits in a prescribed manner,
  there is at least one direction in which the slope of $\lambda_k$ is
  not equal to zero.  This is again a consequence of transversality:
  the space of directions is too rich to produce only zero slopes.
\end{remark}

\newcommand{\nuR}{\nu_{\mathcal{R}}}

\begin{proof}[Proof of Proposition~\ref{isoprop}, part
  \eqref{item:generalized_subspace}]
  First,  if $p: M\times \R \to M$ denotes the projection to the
  first component of $\tM = M \times \R$, then $dp(\tx):
  T_{\tx}\tM \to T_x M$ is the corresponding projection to
  the first component of $T_{\tx}\tM \cong T_{x} M \times \R$; here
  $\tx = (x,\lambda)$ for some $\lambda \in \R$.

  Since
  $d\pi\big|_{T_{z}\widetilde S} = d\Big(\lambda_k\big|_S\Big) \circ
  dp\big|_{T_{z}\widetilde S}$, where $S = p(\widetilde{S})$ is the
  constant multiplicity stratum of $x$, we conclude from
  Lemma~\ref{lem:precriticality_condition} that $d\pi(z)\big|_Q = 0$
  when $Q=T_{z}\widetilde S$.

  Let now $Q\neq T_{z}\widetilde S$ and assume that
  \begin{equation}
    \label{eq:degen_covec}
    d\pi\left(z\right)\big|_Q = 0.
  \end{equation}
  Let $\nu$ be the multiplicity of the eigenvalue $\lambda_k(x)$ of
  $\F(x)$ and $\esp_k$, $\dim\esp_k = \nu$, be the
  corresponding eigenspace.  Let $\mathcal{R}$ be the stratum used for
  the definition of $Q$ in \eqref{gentangent} and $\nuR<\nu$ be the
  multiplicity of $\lambda_k$ on $p(\mathcal{R})$.

  Let $(z_i) \subset \mathcal{R}$ be the sequence defining $Q$ and let
  $x_i = p(z_i)$.  Let $\esp_k(x_i) \subset \FF^n$ denote the
  $\nuR$-dimensional eigenspace of the eigenvalue $\lambda_k$ of
  $\F(x_i)$ and let $\U_i$ be a choice of linear isometry from $\FF^{\nuR}$ to
  $\esp_k(x_i)$.  Finally, let $W_i \subset T_{x_i} M$ denote the first
  component of the tangent space at $z_i$ to $\mathcal{R}$, namely
  $W_i = dp(z_i)\big(T_{z_i} \mathcal{R}\big)$.

  We would like to use Hellmann--Feynman theorem at $x_i$.  In the
  directions from $W_i$, the eigenvalue $\lambda_k$ retains
  multiplicity $\nu_\mathcal{R}$ \emph{in the linear approximation}.
  In other words, directional derivatives of the eigenvalue group of
  $\lambda_k$ are all equal.  Formally,
  \begin{equation}
    \label{eq:HF_at_xi}
    \U_i^* \big(d\F(x_i) w\big) \U_i = D_w \lambda_k(x_i) \, I_{\nuR},
    \qquad 
    \text{for all } w \in W_i;
  \end{equation}
  here $D_w\lambda_k$ is the directional derivative of $\lambda_k$.
  This expression is invariant with respect to the choice of isometry
  $\U_i$.

  Using compactness of the Grassmannians and, if necessary, passing to
  a subsequence, the spaces $\esp_k(x_i)$ converge to a subspace
  $\esp^{\mathcal{R}}_k$ of the $\nu$-dimensional eigenspace $\esp_k$
  of the matrix $\F(x)$.  The isometries $\U_i$ (adjusted if
  necessary) converge to a linear isometry $\U_{\mathcal{R}}$ from
  $\FF^{\nuR}$ to $\esp^{\mathcal{R}}_k$.  Tangent subspaces $W_i$
  also converge to the subspace $W_0 := dp(z)Q$.  Passing to the
  limit in \eqref{eq:HF_at_xi}, the derivative on the right-hand side
  of \eqref{eq:HF_at_xi} must tend to 0 due to \eqref{eq:degen_covec}.
  Recalling the definition of $\cH$ in \eqref{eq:HF_matrix}, we get
  \begin{equation}
    \label{eq:HF_limiting}
    \U_{\mathcal{R}}^* \big(d\F(x) w\big) \U_{\mathcal{R}}
    = \U_{\mathcal{R}}^* \U \big(\cH_{x} w\big) \U^* \U_{\mathcal{R}} = 0,
        \qquad 
    \text{for all } w \in W_0.
  \end{equation}
  In other words, the matrix $\cH_{x}w$ with $w$ restricted to $W_0$
  maps vectors from $V = \Ran(\U^*\U_{\mathcal{R}}) \subset \FF^{\nu}$ to vectors
  orthogonal to $V$.    We can express this as
  \begin{equation}
    \label{eq:Range_H}
    \Ran \cH_{x}\big|_{W_0}
    \subset \Sym_{\nu} \left(V, V^\perp\right),
  \end{equation}
  where $\Sym_\nu(X, Y)$ denotes the set of all $\nu\times\nu$
  self-adjoint matrices that map $X$ to $Y$.  The space $V$ is
  $\nuR$-dimensional\footnote{From properties of isometries and the
    inclusion $\esp^{\mathcal{R}}_k \subset \esp_k$ it can be seen
    that $(\U^*\U_{\mathcal{R}})^* \U^* \U_{\mathcal{R}} = I_{\nuR}$.}
  and, in a suitable choice of basis, a $\nuR \times \nuR$ subblock of
  $\cH_{x}w$ is identically zero.  Therefore, the dimension of
  $\Sym_{\nu} \left(V, V^\perp\right)$ is
  \begin{equation}
    \label{eq:dim_Sym_VV}
    \dim \Sym_{\nu} \left(V, V^\perp\right)
    = \dim \Sym_{\nu} - \dim \Sym_{\nuR}
    = s(\nu) - s(\nuR).
  \end{equation}

  On the other hand, we have the following equalities,
  \begin{equation}
    \label{eq:KerH_RanH}
    \codim \Ker \cH_{x}
    = \dim \Ran \cH_{x}
    = \dim \Sym_\nu -1
    = s(\nu)
    = \codim T_{x} S.
  \end{equation}
  The first is the rank-nullity theorem, the second is because
  $\Ran \cH_{x}$ has codimension 1 (by condition (N)), the third is
  the definition of $s(\nu)$ and the last is from the properties of
  $S$.  Using $T_{x}S\subset\Ker \cH_{x}$
  (Lemma~\ref{lem:precriticality_condition}) and counting dimensions,
  we conclude
  \begin{equation}
    \label{eq:KerH_TS}
    \Ker \cH_{x} = T_{x}S.
  \end{equation}

  Now we want to show that the stratification on $M$ induced by the
  multiplicity of the eigenvalue $\lambda_k(x)$ satisfies
  \term{Whitney condition A}: any generalized tangent space at
  $z$ contains the tangent space of the stratum containing $z$.
  For this note that the \term{discriminant variety} of
  $\Sym_n$,
  \begin{equation}
    \label{discriminant}
    \mathrm{Discr}_n:=\bigcup_{1\leq k\leq n,\ \nu>1} Q^n_{k,\nu},
  \end{equation}
  is an algebraic variety. Therefore, by classical results of Whitney
  \cite{Whitney65}, $\mathrm{Discr}_n$ admits a stratification
  satisfying Whitney condition A. Consequently, if $\F: M \to \Sym_n$
  is a  transverse  family then the fact that $\mathrm{Discr}_n$
  satisfies Whitney condition A implies that the stratifications on
  $M$ induced by the multiplicity of the eigenvalue $\lambda_k(x)$
  satisfies Whitney condition A as well.
  
  Whitney condition A gives the inclusion $T_{x} S \subset W_0$ and
  therefore $\Ker\cH_{x}\subset W_0$.  Using the
  rank-nullity theorem again, we get
  \begin{multline}
    \label{eq:rank_nullity_H}
    \dim \Ran \cH_{x}\big|_{W_0}
    = \codim_{W_0} \Ker \cH_{x}\big|_{W_0}
    = \codim_{W_0} \Ker \cH_{x}
    = \dim W_0 - \dim \Ker \cH_{x} \\
    = \codim_{T_{x}M} \Ker \cH_{x} - \codim_{T_{x}M} W_0
    = s(\nu) - s(\nuR).
  \end{multline}
  Comparing \eqref{eq:Range_H}, \eqref{eq:dim_Sym_VV} and
  \eqref{eq:rank_nullity_H} we conclude that
  \begin{equation}
    \label{eq:Range_H_exact}
    \Sym_{\nu}\left(V, V^\perp\right) = \Ran \cH_{x}\big|_{W_0}.
  \end{equation}
  Consequently,\footnote{Note that in equation~\eqref{eq:Sym_perp} the
    same notation $\perp$ is used for two different operations: on one
    hand, for the operation of taking orthogonal complement for
    subspaces $\mathbb F^\nu$ and, on the other hand, for the operation
    of taking orthogonal complement for subspaces of $\Sym_{\nu}$.}
  \begin{equation}
    \label{eq:Sym_perp}
    \left( \Ran \cH_{x} \right)^\perp
    \subset \left( \Ran \cH_{x}\big|_{W_0} \right)^\perp
    = \Sym_{\nu}\left(V, V^\perp\right)^\perp
    = \Sym_{\nu}\big(V^\perp, 0\big),
  \end{equation}
  i.e.\ $V^\perp$ is in the kernel of the matrices from
  $\left( \Ran \cH_{x} \right)^\perp$ which contradicts condition (N),
  see \eqref{eq:ndccn}.
\end{proof}

\begin{proof}[Proof of Proposition~\ref{isoprop}, part
  (\ref{item:isolated_point})]
  Assume, by contradiction, that $x$ is an accumulation point of a
  sequence $(x_i)$ of points which are critical on their respective strata.
  Passing to a subsequence if necessary, we can assume that all
  $z_i := \big(x_i, \lambda_k(x_i)\big)$ belong to the same stratum
  $\mathcal R$ and that the sequence of spaces $T_{z_i}\mathcal R$
  converges to a space $Q$.

  Note that $Q$ is a nontrivial generalized tangent space to $Z_k$ at
  $x$.  Since $x_i$ are critical for $\lambda_k$ restricted to the
  stratum $p(\mathcal{R})$, we have
  $d\pi(z_i)\big|_{T_{z_i}\mathcal
    R}=d\lambda_k\big|_{p(\mathcal{R})}(x_i) = 0$ and finally
  $d\pi(z)\big|_Q = 0$, which is a contradiction to part
  (\ref{item:generalized_subspace}) of the Proposition.
\end{proof}

We finish the section with establishing the comforting\footnote{In
  every particular case of $\F$, one still needs to establish
  non-degeneracy of the critical point ``by hand''.  In some
  well-studied cases, such as discrete magnetic Schr\"odinger
  operators \cite{FilKac_am18,AloGor_jst23}, degenerate critical
  points are endemic.} result of
Theorem~\ref{thm:gen_Morse_generic_intro}: the set of generalized
Morse families is open and dense.  We restate
Theorem~\ref{thm:gen_Morse_generic_intro} in an expanded form.

\begin{theorem}[Theorem~\ref{thm:gen_Morse_generic_intro}]
  \label{thm:gen_Morse_genericty}
  The set of families $\F$ having the below properties for every
  $\lambda_k$ is open and dense in the Whitney topology of
  $C^{r} (M,\Sym_n)$, $2\leq r\leq \infty$:
  \begin{enumerate}[\normalfont{(\arabic*)}]
  \item \label{item:trans_generic} at every point $x$, $\F$ is
     transverse  in the sense of Definition~\ref{def:transversality},
  \item \label{item:posdef_generic} at every point $x$, either $\Ran\cH_x$ or
    $\left(\Ran\cH_{x}\right)^\perp$ contains a positive definite
    matrix,
  \item \label{item:smooth_nondeg_generic} in the latter case,
    $\lambda_k$ restricted to the constant multiplicity stratum of $x$
    has a non-degenerate critical point at $x$.
  \end{enumerate}
  In particular, a family $\F$ satisfying the above properties is
  generalized Morse (Definition~\ref{def:generalized_Morse}).  
\end{theorem}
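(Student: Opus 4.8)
The plan is to recast properties \ref{item:trans_generic}--\ref{item:smooth_nondeg_generic} as the single requirement that the $2$-jet extension $j^2\F$ avoid a ``bad'' semialgebraic subset of the jet bundle, and then apply the jet Thom transversality theorem for Whitney-stratified targets, in the spirit of the classical proof that Morse functions are generic \cite[Ch.~4, Thm.~1.2]{Hirsch94}. We pass to $J^2(M,\Sym_n)$ because property \ref{item:smooth_nondeg_generic} involves the Hessian of $\lambda_k\big|_S$, which by second-order perturbation theory (the Cauchy integral for the Riesz projector together with the Rellich expansion) is a rational function of the $2$-jet of $\F$ with no poles on the locus where $\lambda_k$ is a repeated eigenvalue. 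For each $k\in\{1,\dots,n\}$ and $\nu\in\{1,\dots,n\}$ --- recall $Q^n_{k,\nu}\subset\Sym_n$ is a semialgebraic submanifold of codimension $s(\nu)$, see~\eqref{eq:codim_nu_again} (the case $\nu=1$ simply recovers the classical genericity of Morse functions, so we focus on $\nu\ge 2$) --- let $\mathbb W_{k,\nu}\subset J^2(M,\Sym_n)$ be the set of $2$-jets $\sigma$ based at a point $x$ with $\F_\sigma(x)\in Q^n_{k,\nu}$ for which at least one of the following fails: transversality $\mathcal I_\nu+\Ran\cH_x=\Sym_\nu$; the dichotomy that $\Ran\cH_x$ or $\bigl(\Ran\cH_x\bigr)^\perp$ contains a positive-definite matrix; and, whenever $\bigl(\Ran\cH_x\bigr)^\perp=\Span\{B\}$ with $B\in\Sym_\nu^{++}$, non-degeneracy of the Hessian of $\lambda_k\big|_S$ at $x$. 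Put $\mathbb W:=\bigcup_{k,\nu}\mathbb W_{k,\nu}$ and $\mathcal G:=J^2(M,\Sym_n)\setminus\mathbb W$. By Lemma~\ref{lem:transversality_reduced_test}, Theorem~\ref{thm:regular}, Definitions~\ref{def:nondeg_crit_N}--\ref{def:nondeg_crit_S}, Lemma~\ref{lem:precriticality_condition}, and the final sentence of the theorem, a family $\F$ satisfies \ref{item:trans_generic}--\ref{item:smooth_nondeg_generic} --- and is therefore generalized Morse --- if and only if $j^2\F(M)\subset\mathcal G$.

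\textbf{Density.} The crux is the estimate $\codim\mathbb W\ge d+1>d=\dim M$, which we verify on each $\mathbb W_{k,\nu}$ after fixing a Whitney stratification (semialgebraic sets always admit one). If $d<s(\nu)$ then $\mathbb W_{k,\nu}$ is contained in the locus $\{\F_\sigma(x)\in Q^n_{k,\nu}\}$, already of codimension $s(\nu)>d$. If $d\ge s(\nu)$, imposing $\F_\sigma(x)\in Q^n_{k,\nu}$ costs codimension $s(\nu)$, after which $\cH_x\colon T_xM\to\Sym_\nu$ ranges freely; then: failure of transversality means $\Ran\cH_x$ lies in one of the hyperplanes of $\Sym_\nu$ through $\mathcal I_\nu$, a condition of codimension $d-s(\nu)+1$; on the transverse locus, failure of the positive-definiteness dichotomy forces $\Ran\cH_x$ to have codimension exactly $1$ (a further $d-s(\nu)$), with $\bigl(\Ran\cH_x\bigr)^\perp$ spanned by a nonzero matrix lying on the boundary of $\Sym_\nu^+$ (one more); and, given transversality and condition (N), which by Lemma~\ref{lem:precriticality_condition} already makes $x$ a critical point of $\lambda_k\big|_S$, degeneracy of the Hessian is the vanishing of a determinant, costing one more codimension because the pure second-derivative contribution to $\mathrm{Hess}\bigl(\lambda_k\big|_S\bigr)(x)$ runs over all quadratic forms on $T_xS$ as $d^2\F(x)$ varies. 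In each sub-case the running total is $d+1$; the more degenerate sub-strata (corank of $B$ or of the restricted Hessian at least $2$, etc.) only increase it. Since there are finitely many pairs $(k,\nu)$, $\codim\mathbb W\ge d+1$. The jet Thom transversality theorem for Whitney-stratified sets then gives a residual set of $\F$ with $j^2\F$ transverse to every stratum of $\mathbb W$; as $\dim j^2\F(M)=d$ is strictly less than the codimension of each stratum, transversality of $j^2\F$ to such a stratum means they do not meet, so $j^2\F(M)\subset\mathcal G$ for a residual --- in particular dense --- set of $\F$.

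\textbf{Openness.} This uses compactness of $M$. Property \ref{item:trans_generic} is exactly transversality of $\F$ to the Whitney-stratified discriminant variety $\mathrm{Discr}_n$ of \eqref{discriminant}, whose strata are the $Q^n_{k,\nu}$; transversality to a Whitney-stratified closed subset of $\Sym_n$ is open in the Whitney topology of $C^r(M,\Sym_n)$ when $M$ is compact. On the open set of transverse families, properties \ref{item:posdef_generic} and \ref{item:smooth_nondeg_generic} are vacuous away from the closed --- hence compact --- set $K=\{x\in M\colon\F(x)\text{ has a repeated eigenvalue}\}$, and at each point of $K$ they are open conditions on the $2$-jet: positive-definiteness of a matrix is open, and, given transversality, $\codim\Ran\cH_x\in\{0,1\}$ cannot jump upward under a small perturbation (that would contradict openness of transversality), so both alternatives in \ref{item:posdef_generic}, as well as the non-degeneracy in \ref{item:smooth_nondeg_generic}, persist under small perturbations of $\F$. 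A standard compactness argument, identical to the classical one \cite[Ch.~4]{Hirsch94}, promotes these pointwise open conditions to a $C^r$-neighborhood of $\F$; equivalently, one checks directly that $\mathcal G\subset J^2(M,\Sym_n)$ is open, after which $\{\F\colon j^2\F(M)\subset\mathcal G\}$ is open by compactness of $M$ and continuity of $\F\mapsto j^2\F$.

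\textbf{Main obstacle.} The step I expect to be hardest is the codimension bookkeeping for $\mathbb W_{k,\nu}$: one must organize the Whitney stratification finely enough that \emph{every} stratum --- including each degenerate sub-case of the normal matrix $B$ and of the restricted Hessian --- has codimension exceeding $d$, and one must carry out the borderline analysis of exactly when the hyperplane $\bigl(\Ran\cH_x\bigr)^\perp$ of $\Sym_\nu$ is disjoint from $\Sym_\nu^{++}$ (precisely when its generator is semidefinite but not definite). A secondary technical annoyance, already visible in the openness argument, is the upper semicontinuity of eigenvalue multiplicity under limits, which must be controlled using openness of transversality.
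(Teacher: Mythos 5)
Your overall strategy — encoding the three properties as avoidance of a ``bad'' semialgebraic subset of a jet bundle, then invoking a stratified jet Thom transversality theorem — is the same as the paper's, and your codimension bookkeeping at the top strata is correct. The genuine departure is that you work in $J^2(M,\Sym_n)$ while the paper stays in $J^1(M,\Sym_n)$ and encodes the second-order information through transversality of the $1$-jet extension graph $\Gamma^1(\F)$ to a closed stratified set $T^c\subset J^1(M,\Sym_n)$; this is a real simplification because it avoids ever writing down an explicit formula in $d^2\F(x)$ for the Hessian of $\lambda_k\big|_S$.

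That shortcut matters, because the one step you leave unproven is the technical crux. You assert that ``the pure second-derivative contribution to $\mathrm{Hess}\bigl(\lambda_k\big|_S\bigr)(x)$ runs over all quadratic forms on $T_xS$ as $d^2\F(x)$ varies,'' but this is far from automatic: $S$ itself depends on $\F$ (so the restriction map has variable domain), and by the Rellich expansion $\mathrm{Hess}\bigl(\lambda_k\big|_S\bigr)(x)$ mixes $d^2\F(x)$ with resolvent/eigenprojector terms built from $d\F(x)$ and the curvature of $S$. Without this surjectivity your codimension count for the degenerate-Hessian locus does not close, and the density claim is not established. The paper resolves precisely this point by constructing an explicit submersion $\Psi:J_S\to J^1(S,\R)$, checking submersivity via the curve $t\mapsto\bigl(x_t,\,A_0+(f_t-f_0)I,\,L_0+I(g_t-g_0)P\bigr)$ (which works because $Q^n_{k,\nu}$ is invariant under adding multiples of $I$ and $\widehat\lambda_k(A+\mu I)=\widehat\lambda_k(A)+\mu$), showing $\Psi$ carries the $1$-jet graph of $\F$ to the $1$-jet graph of $\lambda_k\big|_S$ and the stratum $T^c_{k,\nu}\cap J_S$ to the zero section of $J^1(S,\R)$, and then using the standard fact that transversality of the $1$-jet graph of a smooth function to the zero section is exactly non-degeneracy of its critical points. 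If you supplied an analogous lemma in $J^2$, your argument would go through; as written, the hardest part of the proof is assumed rather than shown. A secondary, purely cosmetic point: your openness argument leans on compactness of $M$ (which is allowed by the paper's global convention) whereas the paper's citations to the stratified Thom theorem give openness and density in the Whitney strong topology without that hypothesis.
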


\begin{remark}
  \label{rem:thm:gen_Morse_generic_intro}
  Observe that property \eqref{item:trans_generic} of Theorem
  \ref{thm:gen_Morse_genericty} does not imply property
  \eqref{item:posdef_generic}: a counter-example is provided by
  Example \ref{semidef_example}. Furthermore, when
  \eqref{item:trans_generic} and the second case of
  \eqref{item:posdef_generic} hold --- and thus non-degenerate
  criticality condition (N) is fulfilled --- Lemma
  \ref{lem:precriticality_condition} shows that $\lambda_k \big|_S$
  has a critical point at $x$.  Property
  \eqref{item:smooth_nondeg_generic} posits non-degeneracy of this
  point, strengthening the conclusion to non-degenerate criticality
  condition (S).
\end{remark}

\begin{proof}[Proof of Theorem \ref{thm:gen_Morse_genericty}]
  Lemma~\ref{lem:transversality_reduced_test} showed that
  the transversality in the sense of Definition~\ref{def:transversality}
  is equivalent to the transversality between $\F$ and the
  submanifold $Q^n_{k, \nu}$ at $x$.

  As was mentioned before the discriminant variety $\mathrm{Discr}_n$
  admits a stratification satisfying Whitney condition A.  For such
  stratifications,\footnote{And in fact only for them
    \cite{Trotman78}.} we have the stratified version of the
  weak\footnote{The word ``weak" here is used to distinguish it from
    the jet version of the Thom transversality theorem which is usually
    called strong\cite{AGVbook}.}  Thom transversality theorem (see
  \cite[Proposition 3.6]{Feldman65} or informal discussions in
  \cite[Sec 2.3]{AGVbook}).  Namely, for any $1\leq r\leq \infty$, the
  set of maps in $C^{r}(M,\Sym_n)$ that are  transverse  to
  $ \mathrm{Discr}_n$ is open and dense in the Whitney topology in
  $C^{r}(M,\Sym_n)$.
    
  This establishes that property~\ref{item:trans_generic} holds for
  families $\mathcal F$ from an open and dense set in the Whitney
  topology of $C^{r} (M,\Sym_n)$.

  Properties~\ref{item:posdef_generic}--\ref{item:smooth_nondeg_generic}
  are more challenging because they involve properties of the
  derivatives of $\F$.  Let $J^1(M, \Sym_n)$ denote the space of the
  $1$-jets of smooth families of self-adjoint matrices and let
  $\Gamma^1(\F) \subset J^1(M, \Sym_n)$ denote the graph of the 1-jet
  extension of a smooth family $\F\colon M\to \Sym_n$,
  \begin{equation}
    \label{eq:graph1jetext}
    \Gamma^1(\F) := \left\{\big(x,\F(x), d\F(x)\big) \colon x\in M \right\}.
  \end{equation}
  We will show that our conclusions follows from the transversality
  (in the differential topological sense) of $\Gamma^1\left(\F\right)$
  to certain stratified subspaces of $J^1(M, \Sym_n)$.  Then the
  proposition will follow from a stratified version of the strong (or
  jet) Thom transversality theorem (see \cite[p.\ 38 and p.\
  42]{AGVbook} as well as \cite[Proposition 3.6]{Feldman65}): the set
  of families whose $1$-jet extension graph is  transverse  to a closed
  stratified subspace is open and dense in the Whitney topology of
  $C^{r} (M,\Sym_n)$ with $2\leq r\leq \infty$.  The theorem holds if
  the stratified subspace satisfies Whitney condition A.

  The jet space $J^1(M, \Sym_n)$ is the space of triples $(x,A,L)$
  such that
  \begin{equation}
    \label{eq:jet1_space}
    x \in M, \qquad A \in \Sym_n, \qquad L\in \Hom(T_xM, T_A \Sym_n).
  \end{equation}
  Given an integer $k$, $1\leq k \leq n$, a matrix $A \in Q^n_{k,\nu}$
  and a ``differential'' $L \in \Hom(T_xM, T_A \Sym_n)$, introduce the
  linear subspace
  \begin{equation}
    \label{eq:RanL}
    \Ran L_{k,A} := \Big\{ \U_{k,A}^* (Lv) \U_{k,A}
    \colon v\in T_xM \Big\}  \ \subset \ \Sym_\nu(\FF),
  \end{equation}
  where $\U_{k,A}$ is a linear isometry from $\FF^\nu$ to the $\nu$-dimensional
  eigenspace $\esp_k(A)$ of the eigenvalue $\lambda_k$ of $A$.
  
  We define the following subsets of $J^1(M, \Sym_n)$.
  \begin{align}
    \label{eq:Tc_def}
    T^c
    &:= \bigcup_{1\leq k\leq n,\ \nu\geq1} \Big\{
    (x,A,L) \colon A \in Q^n_{k,\nu},\ (\Ran L_{k,A})^\perp \neq 0
    \Big\}.
    \\
    \label{eq:Td_def}
    T^c_0
    &:= \bigcup_{1\leq k\leq n,\ \nu\geq1} \Big\{
    (x,A,L) \colon A \in Q^n_{k,\nu},\
    \exists B\in(\Ran L_{k,A})^\perp\setminus\{0\},\ \det B=0 \Big\}.
  \end{align}
  We note that that the subspace $\Ran L_{k,A}$ does not depend on the
  base point $x$ but it depends the particular choice of the isometry
  $\U_{k,A}$. Nevertheless, the properties of $\Ran L_{k,A}$ used in
  the definitions of $T^c$ and $T^c_0$ above are independent of the
  choice of the isometry $\U_{k,A}$.

  \begin{lemma}
    \label{lem:TcProperties}
    $T^c$ and $T^c_0$ are stratified spaces satisfying Whitney
    condition A.  Every stratum of $T^c$ has codimension at least $d$
    in $J^1(M, \Sym_n)$, where $d=\dim M$; every stratum of $T^c_0$
    has codimension at least $d+1$.  
  \end{lemma}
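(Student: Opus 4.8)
The plan is to construct explicit stratifications of $T^c$ and $T^c_0$ by partitioning according to the discrete data carried by the triple $(x,A,L)$, namely the multiplicity $\nu$ of $\lambda_k$ at $A$ and the dimension (and, for $T^c_0$, the maximal kernel dimension of a semidefinite element) of $\left(\Ran L_{k,A}\right)^\perp$. First I would recall that the assignment $A \mapsto \esp_k(A)$ gives a smooth map from the semialgebraic manifold $Q^n_{k,\nu}$ into the Grassmannian $\Gr_\FF(\nu,n)$, so that choosing $\U_{k,A}$ locally smoothly (a local section of the tautological frame bundle) makes $(x,A,L)\mapsto \Ran L_{k,A}\subset\Sym_\nu(\FF)$ a smooth map into the appropriate Grassmannian bundle; the conditions defining $T^c$ and $T^c_0$ are invariant under the ambiguity in $\U_{k,A}$, as already noted in the text. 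Stratifying $T^c$ by the pair $(\nu,\, c)$ where $c=\codim_{\Sym_\nu}\Ran L_{k,A}\geq 1$, and $T^c_0$ by the triple $(\nu,\,c,\,m)$ where $m=\max\{\dim\Ker B : B\in(\Ran L_{k,A})^\perp\setminus\{0\},\ \det B=0\}$, each piece is a semialgebraic subset of a fiber bundle over $Q^n_{k,\nu}$ and can be refined (using semialgebraic stratification of semialgebraic sets, as in \cite{Whitney65} or standard references) to a Whitney stratification; Whitney condition A for the union then follows exactly as for $\mathrm{Discr}_n$ in the proof of Proposition~\ref{isoprop}, since the whole construction is algebraic in $(A,L)$.

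For the codimension counts, the key computation is fiberwise over a fixed stratum $Q^n_{k,\nu}$ of the base. Over such a stratum, the condition $(\Ran L_{k,A})^\perp\neq 0$ in the definition of $T^c$ is the condition that the linear map $T_xM\to\Sym_\nu(\FF)$, $v\mapsto \U^*(Lv)\U$, be non-surjective. In the jet space, $L$ ranges over $\Hom(T_xM,T_A\Sym_n)$, which surjects onto $\Hom(T_xM,\Sym_\nu(\FF))$ via the compression map (as in the proof of Lemma~\ref{lem:transversality_reduced_test}); the non-surjective linear maps $T_xM\to\Sym_\nu(\FF)$ form a determinantal-type subvariety whose codimension is $d=\dim M$ when $\dim\Sym_\nu(\FF)\le d$ and in general is $\dim\Sym_\nu(\FF) - \min(d,\dim\Sym_\nu(\FF)) + 1 \geq 1$, but the cleanest uniform bound is obtained by intersecting with a single linear hyperplane condition per ``missing'' dimension of the range — the crucial point is only that the generic such $L$ \emph{is} surjective, so the locus is a proper subvariety, and a standard fiber-dimension count (codimension of non-surjectivity of a generic linear map into an $N$-dimensional space from a $d$-dimensional space is $\max(N-d,0)+\cdots$) gives codimension $\geq d$ in the fibers where $N = \dim\Sym_\nu\geq d$, hence codimension $\geq d$ in $J^1(M,\Sym_n)$ overall once one checks the base stratum $Q^n_{k,\nu}$ itself contributes only in the direction of increasing $\nu$. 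For $T^c_0$ one imposes additionally that \emph{some} nonzero element of the one-or-higher-dimensional space $(\Ran L_{k,A})^\perp$ be singular; generically $(\Ran L_{k,A})^\perp$ is a line and the condition $\det B = 0$ on a generic line in $\Sym_\nu(\FF)$ is a single real equation, giving one extra codimension, hence $\geq d+1$.

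The main obstacle I anticipate is making the fiberwise codimension count genuinely uniform and clean: the locus of non-surjective linear maps $L\colon T_xM\to\Sym_\nu(\FF)$ is a stratified determinantal variety whose codimension depends on the relation between $d$ and $\dim\Sym_\nu(\FF)$, and one must verify that \emph{every} stratum — including those coming from deeper degeneracies of $\Ran L_{k,A}$ (higher codimension $c$) and from larger multiplicities $\nu$ — has codimension at least $d$ (respectively $d+1$). The safe route is to handle each $(\nu,c)$ (resp.\ $(\nu,c,m)$) stratum separately: over $Q^n_{k,\nu}$, the set of $L$ with $\codim\Ran L_{k,A}=c$ has fiber codimension at least $c\cdot\max(0,\dim\Sym_\nu-d)+\binom{c+ (d - \dim\Sym_\nu)_{-}}{2}$-type bounds, which one simplifies to ``$\geq d$'' using $c\geq 1$ and $\dim\Sym_\nu\geq d$ in the only cases where $Q^n_{k,\nu}$ is nonempty and relevant (note $\codim Q^n_{k,\nu} = s(\nu)$ is already large, so for small $d$ only $\nu\le$ a small bound occurs); then the extra $\det B=0$ constraint for $T^c_0$, being a nontrivial polynomial condition on the nonempty projective space $\mathbb P\left((\Ran L_{k,A})^\perp\right)$, contributes one further codimension. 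I would organize this as a short case analysis and defer the most mechanical determinantal-codimension estimate to a citation of standard results on generic ranks of linear maps.
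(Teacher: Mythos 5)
Your overall strategy — stratify by the discrete data $(\nu,\dim(\Ran L_{k,A})^\perp,\ldots)$, appeal to semialgebraicity for Whitney condition~A, and count codimension fiberwise over $Q^n_{k,\nu}$ with a determinantal-variety estimate for the rank-deficient loci — is the same as the paper's. But the codimension formulas you write down are incorrect, and since this is the quantitative heart of the lemma the gap is real, not cosmetic. The set of linear maps $\FF^d \to \FF^N$ of rank at most $r$ has codimension $(N-r)(d-r)$ (when $r < \min(d,N)$), so the non-surjective maps (rank $\le N-1$) form a locus of codimension $d-N+1$ when $N \le d$, \emph{not} codimension $d$ and not ``$\max(N-d,0)+\cdots$''. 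With $N = \dim\Sym_\nu = s(\nu)+1$, the fiber codimension of the top stratum of $T^c$ is therefore $d - s(\nu)$, and one only reaches the claimed bound $d$ after adding the codimension $s(\nu)$ of $Q^n_{k,\nu}$ itself. The clean way to organize this (and what the paper does) is a two-case split: if $d \le s(\nu)$ then every $L$ has $(\Ran L_{k,A})^\perp \neq 0$ for trivial dimension reasons, so $T^c = \Pi_1^{-1}(M\times Q^n_{k,\nu})$ over this stratum and its codimension is $s(\nu) \ge d$ outright; if $d > s(\nu)$ then the fiber codimension $d - s(\nu)$ and the base codimension $s(\nu)$ add to exactly $d$ on the top stratum, with deeper strata (lower rank) having strictly larger codimension. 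Your proposed ``one linear hyperplane condition per missing dimension'' heuristic does not give the right answer and should be dropped.

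Two smaller points. First, establishing semialgebraicity of the fibers is not as automatic as your phrase ``the whole construction is algebraic in $(A,L)$'' suggests: the condition involves the $k$-th ordered eigenvalue of $A$ and the associated eigenspace, which are not polynomial in the entries of $A$. The paper handles this by introducing the auxiliary variable $\lambda$ with $\det(A - \lambda I)=0$ and then projecting it out via the Tarski--Seidenberg theorem; you should do the same rather than assert algebraicity. Second, your argument for $T^c_0$ (one extra codimension from the determinant vanishing on the generically one-dimensional $(\Ran L_{k,A})^\perp$, with lower strata of $T^c$ already of codimension $\ge d+1$) is correct and matches the paper.
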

  
  \begin{proof} 
    Obviously the sets $T^c$ and $T^c_0$ are closed, with
    stratification induced by $\nu$ and the dimension of
    $(\Ran L_{k,A})^\perp$.  Besides, they are smooth\footnote{For
      bundles whose fibers are stratified spaces, smoothness is
      defined in the usual way --- as smoothness of trivializing maps.
      Smooth maps between stratified submanifolds are maps which are
      restrictions of smooth maps on the corresponding ambient
      manifolds, see \cite[p.~13]{GM88}} fiber bundles over $M$ with
    semialgebraic fibers and therefore satisfy Whitney condition~A.

    Semialgebraicity of the
    fibers of $T^c$ and $T^c_0$ follows from the Tarski--Seidenberg
    theorem stating that semialgebraicity is preserved under
    projections (\cite[Theorem 2.2.1]{BCR1998}, \cite[Theorem
    8.6.6]{Mishra1993}).  Indeed, let
    $\Pi:J^1(M, \Sym_n)\rightarrow M$ be the canonical projection.
    For each $x\in M$, we view
    $\Pi^{-1}(x)\cong \Sym_n\times \mathrm{Hom}(T_xM, \Sym_n)$ as a
    vector space by canonically identifying $T_A\Sym_n$ with $\Sym_n$.
    Focusing on $T^c$, the set
    \begin{equation*}
      \left\{(A,L,\lambda) \colon
        \det(A-\lambda I)=0,
        \ (\Ran L_{k,A})^\perp \neq 0\ \text{for some }k\right\}
    \end{equation*}
    is semialgebraic in the vector space $\Pi^{-1}(x) \times \R$.  Its
    projection on $\Pi^{-1}(x)$ is exactly the fiber
    $T^c \cap \Pi^{-1}(x)$ and it is semialgebraic by the
    Tarski--Seidenberg theorem.  The argument for $T^c_0$ is identical.
    
    Now we prove that every stratum of $T^c$ has codimension at least
    $d$. Let $\Pi_1: J^1(M, \Sym_n)\rightarrow M\times \Sym_n$ be the
    canonical projection. Recall that the codimension of $Q_{k, \nu}$
    in $\Sym_n$ is $s(\nu) := \dim \Sym_\nu - 1$.

    We consider two cases.  If $\nu$ is such that $d\leq s(\nu)$, then
    $\dim \Ran L_{k, A} \leq d < \dim \Sym_\nu$ and therefore
    $(\Ran L_{k,A})^\perp \neq 0$ for every $L$.  We get
    $\Pi_1^{-1}(M\times Q_{k,\nu}) = T^c$ and has
    codimension $s(\nu)\geq d$.

    Assume now that $\nu$ is such that $d> s(\nu)$. Then for an
    $A\in Q_{k, \nu}$ the codimension of the top stratum of
    $\Pi_1^{-1}(x, A)\cap T^c$ in $\Pi_1^{-1}(x, A)$ is equal to the
    codimension of the subset of matrices of the rank
    $\dim\Sym_\nu-1 = s(\nu)$ in the space of all
    $\bigl(\dim\Sym_\nu\bigr)\times d$ matrices, i.e.\ it is equal
    to\footnote{Here we use that the codimension of the set of
      $n_1\times n_2$ matrices of rank $r$ is equal to
      $(n_1-r)(n_2-r)$.}  $d-s(\nu)$.  Hence, the codimension in
    $J^1(M, \Sym_n)$ of the top stratum of $T^c$ is at least
    $d-s(\nu)$ plus $s(\nu)$, the codimension of
    $\Pi_1^{-1}(M\times Q_{k,\nu})$ in $J^1(M, \Sym_n)$.

    To estimate the codimension of the strata of $T^c_0$, we note that
    on the top strata of $T^c$, $(\Ran L_{k, A})^\perp$ must be
    one-dimensional. This implies that the codimension of the
    intersection of $T^c_0$ with such strata is at least $d+1$, while
    the codimension of intersections of $T^c_0$ with the lower strata
    of $T^c$ is automatically not less than $d+1$.
  \end{proof}

  We continue the proof of Theorem~\ref{thm:gen_Morse_genericty}.  Let
  $\F: M\rightarrow \mathrm{Sym_n}$ be a  transverse  family in the
  sense of Definition \ref{def:transversality} so that the graph
  $\Gamma^1\left(\F\right)$ of its $1$-jet extension is  transverse  to
  $T^c$ and $T^c_0$.  As we mentioned above the set of such maps is
  open and dense in the required topology.  From the transversality of
  $T^c_0$ to the $d$-dimensional $\Gamma^1\left(\F\right)$ we
  immediately get
  \begin{equation}
    \label{eq:Td_no_intersection}
    \Gamma^1\left(\F\right) \cap T^c_0 = \emptyset.
  \end{equation}
  
  Choose an arbitrary point $z$ and eigenvalue $\lambda_k$ (of
  multiplicity $\nu$).  If the corresponding $\Ran\cH_z$
  contains a positive definite matrix,
  properties~\ref{item:posdef_generic}--\ref{item:smooth_nondeg_generic}
  hold trivially.  We therefore focus on the opposite case:
  $\Ran\cH_z \cap \Sym_\nu^{++} = \emptyset$.  In the proof of
  Theorem~\ref{thm:regular} in
  Section~\ref{sec:precritical_but_regular} we saw that this means
  $\left(\Ran\cH_z\right)^\perp$ contains a positive
  \emph{semi}definite matrix $B$.  We want to show that $B$ is
  actually positive definite.

  Assume the contrary, namely $\det B = 0$; we will work locally in
  $J^1(M,\Sym_n)$  around the point in the graph $\Gamma^1\left(\F\right)$,
  \begin{equation}
    \label{1jetext}
    Z = (z, A, L) := \big(z,\F(z), d\F(z)\big).
  \end{equation}
  We first observe that $\Ran L_{k,A}$ defined in \eqref{eq:RanL}
  coincides with $\Ran\cH_z$ defined via \eqref{eq:HF_matrix}.  Since
  $B \in \left(\Ran\cH_z\right)^\perp\setminus\{0\}$ and $\det B=0$,
  we conclude that $Z \in \Gamma^1\left(\F\right) \cap T^c_0$,
  contradicting \eqref{eq:Td_no_intersection}.
  Property~\ref{item:posdef_generic} is now verified.

  We now verify property~\ref{item:smooth_nondeg_generic}.  We have
  a positive definite $B \in \left(\Ran\cH_z\right)^\perp$, therefore,
  by Lemma~\ref{lem:precriticality_condition}, $z$ is a smooth
  critical point along its constant multiplicity stratum $S = S_z$.
  Also from the existence of $B$, we have
  \begin{equation}
    \label{eq:XinTc}
    Z \in \Gamma^1\left(\F\right) \cap T^c.
  \end{equation}
  Denote by $T^c_{k,\nu}$ the stratum of $T^c$ containing the point
  $Z$.  By definition of transversality to a stratified space,
  $\Gamma^1\left(\F\right)$ is  transverse  to $T^c_{k,\nu}$ in
  $J^1(M,\Sym_n)$.  By dimension counting and transversality,
  $\left(\Ran L_{k,A}\right)^\perp$ is 1-dimensional along $T^c_{k,\nu}$.

  Define two submanifolds of $J^1(M,\Sym_n)$,
  \begin{align}
    \label{eq:Jknu}
    J_{k,\nu} &:= \left\{ (x,A,L) \in J^1(M,\Sym_n) \colon A \in
                Q^n_{k,\nu} \right\}, \\
    J_S &:= \left\{ (x,A,L) \in J^1(M,\Sym_n) \colon x \in S,\, A \in
          Q^n_{k,\nu},\, L(T_xS) \subset T_A Q^n_{k,\nu}\right\}
          \quad \subset \quad J_{k,\nu}.
  \end{align}
  To see that $J_S$ is a manifold, we note that for each fixed
  $(x, A)\in S \times Q^n_{k,\nu}$, the set of admissible $L$ in $J_S$
  is a vector space smoothly depending on $(x, A)$. In other words,
  $J_S$ is a smooth vector bundle over $S\times Q_{k, \nu}$.
  
  We now use the following simple fact (twice): If $U$, $V$, and $W$
  are submanifolds of $M$ such that $W$ is  transverse  to $U$ in $M$
  and $U \subset V$, then $W \cap V$ is  transverse  to $U$ in $V$.
  Since $T^c_{k,\nu} \subset J_{k,\nu}$, we conclude that
  $\Gamma^1\left(\F\right) \cap J_{k,\nu}$ is  transverse  to
  $T^c_{k,\nu}$ in $J_{k,\nu}$.  And now, since
  \begin{equation}
    \label{eq:extension-restriction}
    \Gamma^1\left(\F\right) \cap J_{k,\nu}
    = \left\{\big(x,\F(x), d\F(x)\big) \colon x\in S \right\}
    \quad \subset \quad
    J_S,
  \end{equation}
  we conclude that $T^c_{k,\nu} \cap J_S$ is  transverse  to
  $\Gamma^1\left(\F\right) \cap J_{k,\nu}$ in $J_S$.

  We have successfully localized our $x$ to $S$.  The space
  \eqref{eq:extension-restriction} looks similar to the graph of the
  1-jet extension of $\F\big|_S$, except that the differential
  $d\F(x)$ is defined on $T_x M$ and not on $T_x S$.  Consider the map
  $\Psi : J_S \to J^1(S,\R)$,
  \begin{equation}
    \label{eq:Psidef}
    \Psi(x, A, L) := \Bigl(x,\, \widehat \lambda_k(A), \,
    d\bigl(\widehat \lambda_k|_{Q_{k, \nu}^n}\bigr)(A)
    \circ L|_{T_xS}\Bigr),
  \end{equation}
  which is well-defined and smooth because $\widehat \lambda_k$
  (defined in \eqref{spectrum}) is smooth when restricted to
  $Q_{k, \nu}^n$,
  $d\bigl(\widehat \lambda_k|_{Q_{k, \nu}^n}\bigr)(A) : T_A Q_{k,
    \nu}^n \to \R$ and $L|_{T_xS} : T_xS \to T_A Q_{k, \nu}^n$ by
  definition of $J_S$.
  
  We want to show that $\Psi$ is a submersion and therefore preserves
  transversality.  To prove submersivity of a map it is enough to
  prove that, for any point $q$ in the domain, any smooth curve in the
  codomain of the map passing through the image of $q$ is the image of
  a smooth curve in the domain passing through $q$.

  Let $(x_0,A_0,L_0)$ be an arbitrary point on $J_S$.  We will work
  in a local chart around $x_0\in M$ in which $S$ is a subspace.  Let
  $P$ denote the projection in $T_xM$ onto $T_xS$, which now does not
  depend on the point $x\in S$.  Consider a smooth curve
  $(x_t,f_t,g_t)$ in $J^1(S,R)$ such that
  $\Psi(x_0,A_0,L_0) = (x_0,f_0,g_0)$.  Then the smooth curve
  \begin{equation*}
    \Big( x_t,\, A_0 + (f_t-f_0)I,\,
    L_0 + I (g_t - g_0) P \Big),
  \end{equation*}
  is in $J_S$ and is mapped to $(x_t,f_t,g_t)$ by $\Psi$.  To see
  this, observe that all sets $Q_{k, \nu}^n$ are invariant under the
  addition of a multiple of the identity matrix
  and also that
  $\widehat \lambda_k(A+\mu I) = \widehat \lambda_k(A) + \mu$ and
  therefore $d\bigl(\widehat \lambda_k|_{Q_{k, \nu}^n}\bigr)(A)I =
  1$.
  
  We now have that
  $\Psi\big(T^c_{k,\nu} \cap J_S\big)$ is  transverse  to
  $\Psi\left(\Gamma^1\left(\F\right) \cap J_{k,\nu}\right)$ in
  $J^1(S,\R)$.  It is immediate that
  \begin{equation}
    \label{eq:PsiGamma1}
    \Psi\left(\Gamma^1\left(\F\right) \cap J_{k,\nu}\right)
    = \Gamma^1\left( \widehat\lambda_k \circ \F |_{S} \right).
  \end{equation}
  We now argue that
  \begin{equation}
    \label{eq:PsiTc}
    \Psi\big(T^c_{k,\nu} \cap J_S\big) = \left\{(x,
      \widehat{\lambda}_k(A), 0) \colon x\in
      S,\, A\in Q_{k, \nu}^n \right\}.
  \end{equation}
  Indeed, at $Z$, the space $\left(\Ran L_{k,A}\right)^\perp$ is
  spanned by a positive definite matrix and this property holds in
  a small neighborhood of $Z$ in $T^c_{k,\nu}$.  By
  Lemma~\ref{lem:precriticality_condition}, $L_{k,A} |_{T_xS} = 0$,
  while by Hellmann-Feynman theorem,
  \begin{equation}
    \label{eq:eig_differential0}
    d\bigl(\widehat \lambda_k|_{Q_{k, \nu}^n}\bigr)(A) \circ L|_{T_xS}
    = \frac1\nu \Tr\left(\U^*_{k,A} L|_{T_xS} \U_{k,A} \right)
    = \frac1\nu \Tr\left( L_{k,A} |_{T_xS} \right) = 0.
  \end{equation}

  Finally, it is well known that the transversality of the graph
  $\Gamma^1\left( \widehat\lambda_k \circ \F |_{S} \right)$ to the
  0-section space \eqref{eq:PsiTc} is equivalent to the
  non-degeneracy of the critical point $z$ of
  $\widehat\lambda_k \circ \F |_{S}$, see \cite[Sec 6.1]{Hirsch94}
  or \cite[Lem 5.23]{BanyagaHurtubise_Morse}.
  Property~\ref{item:smooth_nondeg_generic} is now established.
\end{proof}

\section{Topological change in the sublevel sets:
  part~(\ref{item:Hvia_Rspace}) of Theorem~\ref{thm:homology_geom}}
\label{sec:sublevel_change1}

In this section we describe the change in the sublevel sets of the
eigenvalue $\lambda_k$ when passing through a non-degenerate
topologically critical point $x$.  It will be expressed in terms of
the data introduced in Theorem~\ref{thm:critical}, namely the Morse
index $\mu(x)$ of $\lambda_k$ restricted to the local constant
multiplicity stratum $S$ attached to the point $x$, the
  multiplicity $\nu$, and the relative index $i=i(x)$ 
introduced in equation \eqref{eq:relative_index_def}.  As a
result of the section, we will establish part~\eqref{item:Hvia_Rspace}
of Theorem~\ref{thm:homology_geom}.

First we reduce our considerations to the directions  transverse  to the
constant multiplicity stratum $S$ at a critical point $x$.
  
\begin{lemma}
  \label{lem:GM_factorization}
  Let $N$ be a submanifold of $M$ of dimension $\dim N = \codim_M S =
  s(\nu)$ which intersects $S$ transversely at $x$.
  Then, for small enough $U$ and $\epsilon>0$,
  \begin{equation}
    \label{eq:GM_factorization}
    H_r\Big(U^{+\epsilon}_x(\lambda_k),
     U^{-\epsilon}_x(\lambda_k)\Big)
    \cong H_{r-\mu(x)}\Big(
      U^{+\epsilon}_x\left(\lambda_k\big|_N\right),
      U^{-\epsilon}_x\left(\lambda_k\big|_N\right)\Big).
  \end{equation}
\end{lemma}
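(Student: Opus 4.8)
The plan is to recognize \eqref{eq:GM_factorization} as the splitting of the local Morse data of $\lambda_k$ at $x$ into a tangential part (along the constant multiplicity stratum $S$) and a normal part (transverse to $S$), as furnished by the stratified Morse theory of Goresky and MacPherson \cite{GM88}. First I would set the stage exactly as in the paragraph preceding Proposition~\ref{isoprop}: consider the graph $Z_k=\{(y,\lambda_k(y))\colon y\in M\}\subset\tM$, with $\tM=M\times\R$, stratified by the multiplicity of $\lambda_k$, together with the projection $\pi:\tM\to\R$ onto the second factor and the point $z=(x,\lambda_k(x))$. Since $x$ is a non-degenerate topologically critical point of $\lambda_k$ (so $\F$ is transverse and satisfies (N) and (S) at $x$), Corollary~\ref{isopropcor} says that $z$ is a nondepraved point of $\pi|_{Z_k}$ in the sense of \cite[Sec.~I.2.3]{GM88}; moreover, by Proposition~\ref{isoprop} it is the only critical point of $\pi|_{Z_k}$ in a neighborhood and $\lambda_k(x)$ is the corresponding isolated critical value. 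Working inside a small compact ball around $z$ (so that the relevant restriction of $\pi$ is proper), the homeomorphism $y\mapsto(y,\lambda_k(y))$ identifies the pair $\big(U^{+\epsilon}_x(\lambda_k),U^{-\epsilon}_x(\lambda_k)\big)$ with the local Morse data of $\pi|_{Z_k}$ at $z$, for all sufficiently small $U$ and $\epsilon>0$.

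Next I would invoke the structure theorem of \cite{GM88} for nondepraved critical points: the local Morse data of $\pi|_{Z_k}$ at $z$ is homeomorphic, as a pair, to the product of the \emph{tangential Morse data} along the stratum $\widetilde S\ni z$ and the \emph{normal Morse data} transverse to $\widetilde S$. The tangential data is the Morse data of $\pi$ restricted to $\widetilde S$, which under the above homeomorphism is exactly $\lambda_k\big|_S$. By condition (S) this is a smooth function with a non-degenerate critical point of Morse index $\mu=\mu(x)$ at $x$, so its Morse data is homotopy equivalent to the classical pair $\big(\mathbb D^{\mu}\times\mathbb D^{\dim S-\mu},\,(\partial\mathbb D^{\mu})\times\mathbb D^{\dim S-\mu}\big)$, whose relative homology is $\Z$ in degree $\mu$ and $0$ in all other degrees. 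For the normal data I would take, as the ambient normal slice to $\widetilde S$ at $z$, the submanifold $\widetilde N:=N\times\R\subset\tM$, where $N\subset M$ is the transversal of the statement; then $\widetilde N\cap Z_k$ is the graph of $\lambda_k\big|_N$, and $\pi|_{\widetilde N\cap Z_k}$ has an isolated critical point at $z$, since by Corollary~\ref{cor:transversal_cut_of_CP} the family $\F\big|_N$ again satisfies conditions (N) and (S) with constant multiplicity stratum equal to the single point $x$. Hence the normal Morse data of $\pi|_{Z_k}$ at $z$ is precisely the pair $\big(U^{+\epsilon}_x(\lambda_k\big|_N),U^{-\epsilon}_x(\lambda_k\big|_N)\big)$.

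Finally I would assemble the two pieces with the K\"unneth theorem for products of pairs. Because the tangential relative homology is free and concentrated in degree $\mu$, all $\Tor$ terms vanish and
\[
H_r\big(U^{+\epsilon}_x(\lambda_k),U^{-\epsilon}_x(\lambda_k)\big)
\;\cong\;\bigoplus_{p+q=r}H_p(\text{tangential})\otimes H_q(\text{normal})
\;\cong\;H_{r-\mu}\big(U^{+\epsilon}_x(\lambda_k\big|_N),U^{-\epsilon}_x(\lambda_k\big|_N)\big),
\]
which is the asserted isomorphism \eqref{eq:GM_factorization}.

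I expect the main obstacle to be bookkeeping rather than new mathematics: one must carefully reconcile the conventions of \cite{GM88} --- their precise definitions of local, tangential and normal Morse data, the cutoff levels $\pi(z)\pm\epsilon$ versus our $\lambda_k(x)\pm\epsilon$, and the choice of ambient normal slice --- with the notation of the present paper, and verify that it is indeed the version of the product theorem valid for nondepraved (and not merely ``honestly Morse'') critical points that applies here and delivers the tangential factor described above. All hypotheses needed to cite these results have already been secured in Section~\ref{sec:transversality}, namely Proposition~\ref{isoprop} and Corollaries~\ref{cor:transversal_cut_of_CP} and \ref{isopropcor}.
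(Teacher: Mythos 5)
Your proposal follows essentially the same route as the paper's proof: set up the graph $Z_k\subset M\times\R$ with the projection $\pi$, use Corollary~\ref{isopropcor} to confirm that $z=(x,\lambda_k(x))$ is nondepraved, invoke the Goresky--MacPherson product decomposition of the local Morse data into tangential data $(P,Q)$ (a non-degenerate critical point of index $\mu$ on $S$) and normal data along $N$, and finish with the relative K\"unneth theorem, noting that $H_*(P,Q)$ is free and concentrated in degree $\mu$ so the $\Tor$ terms vanish. The only cosmetic gaps you flag yourself (excision to relate $(U^{+\epsilon}_x,U^{-\epsilon}_x)$ to the pair $(U^{-\epsilon,+\epsilon}_x,\partial U^{-\epsilon}_x)$ used in \cite[Sec I.3.5]{GM88}) are exactly the bookkeeping the paper spells out, so no genuine difference in strategy remains.
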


\begin{proof}
  By definition, see \cite[Sec I.3.5]{GM88}, the \term{local Morse data} is
  \begin{equation}
    \label{eq:local_Morse_data_def}
    \Big(U^{-\epsilon,+\epsilon}_x\left(\lambda_k\right),
    \partial U^{-\epsilon}_x\left(\lambda_k\right)\Big),
  \end{equation}
  where
  \begin{equation}
    \label{eq:U+-}
    U^{-\epsilon,+\epsilon}_x
    := U^{+\epsilon}_x \setminus U^{-\epsilon}_x.
  \end{equation}
  The \term{normal data} and the \term{tangential data} is simply the
  data of $\lambda_k$ restricted to the submanifolds $N$ and $S$,
  respectively, see \cite[Sec I.3.6]{GM88}.  The normal data is
  \begin{equation}
    \label{eq:normal_data_def}
    (J,K) \cong
    \Big(U_x^{-\epsilon,+\epsilon}\left(\lambda_k\big|_N\right),
    \partial U_x^{-\epsilon}\left(\lambda_k\big|_N\right)\Big),
  \end{equation}
  and, by the local version of the main theorem of the classical Morse
  theory \cite[Theorem 3.2]{Milnor_MorseTheory}, the tangential data
  is
  \begin{equation}
    \label{eq:tang_data}
    (P, Q) \cong \left(\mathbb B^{\mu(x)}, \partial\mathbb B^{\mu(x)}\right), 
  \end{equation}
  where $\mathbb B^{\mu(x)}$ denotes the $\mu(x)$-dimensional ball.

  We already established in Corollary \ref {isopropcor} that
  $\Bigl(x,\lambda_k(x)\Bigr)$ is a nondepraved point of the corresponding map.
  We can now use \cite[Thm I.3.7]{GM88} to decompose the local Morse
  data into a product of tangential and normal data.  More precisely,
  if the tangential data is $(P,Q)$ and the normal data is $(J,K)$,
  the local Morse data is homotopy equivalent to
  $\bigl(P\times J, (P\times K) \cup (Q\times J)\bigr)$.

  We want to compute
  \begin{align}
    \label{eq:homologies_U_Morse}
    H_r\Big(U^{+\epsilon}_x(\lambda_k), 
    U^{-\epsilon}_x(\lambda_k)\Big)
    &\cong H_r\Big(U^{-\epsilon,+\epsilon}_x\left(\lambda_k\right),
      \partial U^{-\epsilon}_x\left(\lambda_k\right)\Big)
    \\ \nonumber
    &\cong H_r\Big(P\times J, (P\times K) \cup (Q\times J) \Big),
  \end{align}
  the first equality being by Excision Theorem and the second by
  \cite[Thm I.3.7]{GM88} (and homotopy invariance).  By the relative
  version of the K\"unneth theorem, see \cite[Proposition
  12.10]{Dold1980}, we have the following short exact sequence
  \begin{align}
    \label{eq:Kunneth_exact_seq}
    0 \rightarrow
    \bigoplus_{j+k=r} H_j(P,Q) \otimes H_k(J,K)
    &\rightarrow
      H_r\big(P\times J, (P\times K) \cup (Q\times J)\big)
    \\ \nonumber
    &\rightarrow
      \bigoplus_{j+k=r-1} \Tor_1\big(H_j(P,Q), H_k(J,K)\big)
      \rightarrow 0.
  \end{align}
  Since
  \begin{equation}
    \label{eq:sphere_homology}
    H_j(P, Q)
    = H_j \left(\mathbb B^{\mu(x)}, \partial\mathbb B^{\mu(x)}\right)
    = \widetilde H_j(\Sph^{\mu(x)})
    =
    \begin{cases}
      0,& j\neq \mu(x),\\
      \mathbb Z, & j=\mu(x),
    \end{cases}
  \end{equation}
  where $\widetilde H_*$ stands for the reduced homology,
  are free, the torsion product terms in \eqref{eq:Kunneth_exact_seq}
  are all 0.  We therefore get
  \begin{align}
    \label{eq:tensors_only}
    H_r\big(P\times J, (P\times K) \cup (Q\times J)\big)
    &\cong
      \bigoplus_{j+k=r} H_j(P,Q) \otimes H_k(J,K)
    \\ \nonumber
    &= \Z \otimes H_{r-\mu(x)}(J,K)
      = H_{r-\mu(x)}(J,K),
  \end{align}
  where we used~\eqref{eq:sphere_homology} again.  Combining
  \eqref{eq:homologies_U_Morse} with \eqref{eq:tensors_only} and
  \eqref{eq:normal_data_def}, we obtain the result.
\end{proof}

The preceding lemma tells us that we can restrict our attention to the
case $M=N$.  In this case the constant multiplicity stratum attached
to $x$ is the isolated point itself and $\dim M =\dim N = s(\nu)$ (see
equation~\eqref{eq:codim_nu_again} for the formula defining $s(\nu)$
and some explanations).  Since the considerations are purely local, we
can assume that $M= \R^{s(\nu)}$, $x=0$, and $\F(x)=0$.

\newcommand{\Deps}{D_{k,\epsilon}}

\begin{lemma}
  \label{homlem1}
  Let $\F:\R^{s(\nu)} \rightarrow \mathrm{Sym}_\nu(\mathbb{F})$, $\F(0)=0$, be
  a smooth family satisfying at $x=0$ non-degenerate criticality
  conditions (N) and (S).
  Then there exists a neighborhood $U$ of $0$ in $\R^{s(\nu)}$, such
  that for sufficiently small $\epsilon>0$ the sublevel set
  $U_0^{+\epsilon}(\lambda_k)$ deformation retracts to
  the set $\Deps \cup U_0^{-\epsilon}(\lambda_k)$, where
  \begin{equation}
    \label{eq:def_retract}
    \Deps :=
    \Big\{x\in U \colon -\epsilon \leq \lambda_1\big(\F(x)\big) =
    \ldots = \lambda_k\big(\F(x)\big) \leq 0 \Big\}.
  \end{equation}
\end{lemma}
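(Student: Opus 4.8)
The plan is to reduce to the situation governed by condition (N) alone, show that $\lambda_k$ has no topologically critical point in a punctured neighbourhood of $0$, and then push the sublevel set down onto $\Deps\cup U_0^{-\epsilon}(\lambda_k)$ by integrating a suitable vector field. In the reduced situation $\F\colon\R^{s(\nu)}\to\Sym_\nu(\FF)$, $\F(0)=0$, the constant multiplicity stratum at $0$ is $\{0\}$ (so condition (S) is vacuous), $\cH_0=d\F(0)$ (the eigenspace at $0$ being all of $\FF^\nu$), and condition (N) reads $(\Ran d\F(0))^\perp=\Span\{B\}$ with $B\in\Sym_\nu^{++}$. First I would record the ordering of the branches: a hyperplane through $0$ in $\Sym_\nu$ with positive-definite normal meets $\Sym_\nu^+$ only at $0$ (since $\langle B,Y\rangle>0$ for $0\neq Y\in\Sym_\nu^+$), so $d\F(0)\hat x$ has a strictly negative eigenvalue for every unit $\hat x$, and, applying the same to $-\hat x$, a strictly positive one as well; by compactness of the unit sphere there is $c_0>0$ with $\lambda_1(d\F(0)\hat x)\le-c_0$ and $\lambda_\nu(d\F(0)\hat x)\ge c_0$, hence, after shrinking $U$, $\lambda_1(\F(x))<0<\lambda_\nu(\F(x))$ for all $x\in U\setminus\{0\}$. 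Consequently, on the coincidence locus $Y:=\{x\in U\colon\lambda_1(\F(x))=\dots=\lambda_k(\F(x))\}$ the common value equals $\lambda_1(\F(x))\le0$, so $Y\cap\{\lambda_k(\F(x))\ge-\epsilon\}=\Deps$ and $R:=U_0^{+\epsilon}(\lambda_k)\setminus\bigl(\Deps\cup U_0^{-\epsilon}(\lambda_k)\bigr)=\{x\in U\colon -\epsilon<\lambda_k(\F(x))\le\epsilon,\ \lambda_1(\F(x))<\lambda_k(\F(x))\}$. The case $k=1$ is then immediate, since $R=\emptyset$ and $\Deps\cup U_0^{-\epsilon}(\lambda_1)=\{x\in U\colon\lambda_1(\F(x))\le0\}=U=U_0^{+\epsilon}(\lambda_1)$; I assume $k\ge2$ hereafter.

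The crux is that for every $x\in U\setminus\{0\}$ the compressed differential is \emph{onto}: $\Ran\cH_x=\Sym_{\nu(x)}$, where $\nu(x)$ is the multiplicity of $\lambda_k$ at $x$. Indeed $\nu(x)<\nu$ (otherwise $\F(x)\in\mathcal{I}_\nu$, which by transversality occurs only at $0$), so the eigenspace $V$ of $\lambda_k$ at $x$ is a \emph{proper} subspace of $\FF^\nu$. The compression $c_V\colon\Sym_\nu\to\Sym_{\nu(x)}$ is onto with kernel the matrices whose $(V,V)$-block vanishes, and a codimension-one subspace of $\Sym_\nu$ compresses onto all of $\Sym_{\nu(x)}$ as soon as its unit normal has nonzero $(V^\perp,V^\perp)$-block. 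For $\Ran d\F(0)$ the normal is $B/\|B\|$, and its $(V^\perp,V^\perp)$-block $P_{V^\perp}BP_{V^\perp}$ is positive definite, hence nonzero, because $\dim V^\perp\ge1$; as its norm is bounded below by $\lambda_{\min}(B)/\|B\|$ uniformly over all proper $V$, continuity of $x\mapsto\Ran d\F(x)$ yields the same for $\Ran d\F(x)$ with $x$ in a (re-shrunk) $U$, i.e.\ $\Ran\cH_x=\Sym_{\nu(x)}$. By Theorem~\ref{thm:regular} every such $x$ is topologically regular for $\lambda_k$; moreover $\{v\colon\cH_x(v)\in-\Sym_{\nu(x)}^{++}\}$ is a nonempty open convex cone of directions along which $\lambda_k^\circ(x,v)\le\lambda^{\max}(\cH_x(v))<0$, as in the proof of Theorem~\ref{thm:regular}.

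I would then construct the retraction from the Lipschitz function $\phi:=\min\bigl(\lambda_k(\F(\cdot))-\lambda_1(\F(\cdot)),\,\lambda_k(\F(\cdot))+\epsilon\bigr)$, which on $U_0^{+\epsilon}(\lambda_k)$ has $\{\phi\le0\}=\Deps\cup U_0^{-\epsilon}(\lambda_k)$ and $\{\phi>0\}=R$, by the first paragraph. A computation analogous to the one above, carried out simultaneously on the two (orthogonal) eigenspaces of $\lambda_k$ and of $\lambda_1$ at $x\in R$, shows that the joint map $v\mapsto\bigl(\cH_x(v),\,\text{compression of }d\F(x)v\text{ to the }\lambda_1\text{-eigenspace}\bigr)$ has range either everything or the zero set of a single linear functional determined by $B$; in either case one can pick $v$ with $\cH_x(v)\in-\Sym_{\nu(x)}^{++}$ and the second component positive semidefinite, so that $v$ strictly decreases both $\lambda_k(\F(\cdot))$ and $\lambda_k(\F(\cdot))-\lambda_1(\F(\cdot))$, hence strictly decreases $\phi$ (a direction decreasing both terms of a minimum decreases the minimum). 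Patching such directions with a locally finite partition of unity on $R$, using that $v\mapsto\lambda_k^\circ(x,v)$ is positively homogeneous and subadditive so that convex combinations of descent directions remain descent directions, produces a locally Lipschitz vector field $V$ on $R$ along whose flow $\phi$, and in particular $\lambda_k(\F(\cdot))$, strictly decreases; extend $V$ by $0$ to $\Deps\cup U_0^{-\epsilon}(\lambda_k)$. For $\epsilon$ small relative to $U$, the flow $\Phi_t$ of $V$ from $x\in R$ stays inside $U_0^{+\epsilon}(\lambda_k)$ (it cannot cross $\{\lambda_k(\F(\cdot))=\epsilon\}$ outward, as $\lambda_k(\F(\cdot))$ decreases along it) and reaches $\{\phi\le0\}$ in a finite time $T(x)$ depending continuously on $x$, so that $H(x,t):=\Phi_{tT(x)}(x)$, with $T\equiv0$ on $\Deps\cup U_0^{-\epsilon}(\lambda_k)$, is the desired strong deformation retraction.

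I expect the main obstacle to be confining the orbits to $U_0^{+\epsilon}(\lambda_k)$ near $\partial U$, where $R$ clusters along the conical ``equator'' $\{\hat x\colon\lambda_k(d\F(0)\hat x)=0\}$ on which $\lambda_k(\F(\cdot))$ is only second order along rays, so that there $V$ must decrease $\phi$ without increasing $|x|$. I would deal with this by choosing $V$ among the descent directions additionally satisfying $\langle x,v\rangle\le0$ (available because the descent cone of the second paragraph is large, containing a whole translate of $\ker\cH_x$), and, if necessary, by first retracting $U_0^{+\epsilon}(\lambda_k)$ into a smaller adapted neighbourhood using that $\lambda_k(\F(\cdot))$ has no critical value in $(-\epsilon,\epsilon)$ other than $0$. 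Alternatively, the entire deformation retraction can be obtained from the relative form of the Clarke-subgradient deformation lemma underlying \cite{APS97}, once one knows, as in the second and third paragraphs, that $\phi$ has no Clarke-critical point on $\{\phi>0\}$.
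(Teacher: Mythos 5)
Your approach is genuinely different from the paper's.  The paper works directly in the target space $\Sym_\nu$: it chooses $U$ so that $\F|_U$ is a diffeomorphism onto an open set cut out by an operator-norm ball, and then builds an explicit piecewise-smooth retraction trajectory $\Gamma_{F_0}(t)$ by intersecting $\F(U)$ with the affine planes $\{F_0 - 2\epsilon s P_+ + r P_-\}$, moving from one constant-gap stratum $G^k_m$ to the next and finally reaching $\Deps\cup U_0^{-\epsilon}(\lambda_k)$; the positive-definite normal $B_x$ guarantees transversality of the planes to $\F(U)$ and therefore well-definedness of each trajectory segment, and continuity of the resulting map is then argued by chaining estimates across the strata.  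You instead stay in the source $\R^{s(\nu)}$, introduce the auxiliary Lipschitz function $\phi=\min(\lambda_k\circ\F-\lambda_1\circ\F,\ \lambda_k\circ\F+\epsilon)$ whose nonpositive set is precisely $\Deps\cup U_0^{-\epsilon}(\lambda_k)$, and propose to flow the sublevel set down along a descent pseudo-gradient.  Your key observation --- that for $x\in U\setminus\{0\}$ the compression $\cH_x$ is \emph{onto} $\Sym_{\nu(x)}$, because the positive-definite normal $B$ has nonzero $(V^\perp,V^\perp)$-block whenever the $\lambda_k$-eigenspace $V$ is proper --- is correct, and is essentially the same positivity input the paper uses to establish transversality; you simply deploy it to prove Clarke-regularity of $\phi$ on $\{\phi>0\}$ rather than to solve a boundary-value problem for the trajectory.

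However, the proposal as written has genuine gaps in the flow construction.  First, the locally Lipschitz descent field $V$ is patched from pointwise descent directions by a partition of unity on the \emph{open} set $R$ and then ``extended by $0$'' to $\Deps\cup U_0^{-\epsilon}(\lambda_k)$; since nothing forces $V$ to decay near $\partial R$, this extension is generally discontinuous, and one cannot simply declare that $H(x,t)=\Phi_{tT(x)}(x)$ is continuous at the interface.  Second, the assertion that each orbit reaches $\{\phi\le0\}$ in a finite time $T(x)$ depending \emph{continuously} on $x$ presupposes a quantitative, uniform lower bound on the descent rate of $\phi$ along $V$ on $\{0<\phi\le c\}\cap U$; you establish pointwise non-criticality but not uniformity, and the set in question is not compact (it clusters along the ``equator'' near $\partial U$).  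Third, the workaround near $\partial U$ --- restricting to descent directions with $\langle x,v\rangle\le0$ --- needs a proof that such directions exist within the already-constrained descent cone, and that the resulting field is still locally Lipschitz and has the uniform descent rate; none of this is carried out.  Your closing remark that the retraction ``can be obtained from the relative form of the Clarke-subgradient deformation lemma underlying \cite{APS97}'' is the right instinct, but that lemma has hypotheses (a uniform Clarke--Palais--Smale type condition on the sublevel slab) that are precisely the uniformity you have not verified.  The paper's explicit trajectory construction sidesteps all of these difficulties: the speed $2\epsilon$ is built into the plane parametrization, giving arrival times bounded by $1$ for free, and the trajectories automatically stay in $\F(U)$ because descent of the spectral radius is part of the construction.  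To complete your route you would need to make the descent field globally Lipschitz with uniform descent rate (e.g.\ by an explicit normalization as in pseudo-gradient flows), prove continuity of the first-hitting time, and confine orbits near $\partial U$; this is plausible but is not yet a proof.
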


\begin{remark}
  \label{rem:boundary_cases}
  It is instructive to consider what happens in the boundary cases
  $k=1$ and $k=\nu$.  We will see that condition (N) implies that $\F$
  is injective and $\F(U)$ does not contain any semidefinite matrices
  except 0 (for a suitably small $U$).  Therefore, when $k=1$,
  \begin{equation*}
    U_0^\epsilon(\lambda_1) = U = D_{1,\epsilon} \cup
    U_0^{-\epsilon}(\lambda_1)  
  \end{equation*}
  and no retraction is needed.

  Similarly,
  when $k=\nu$, we have $U_0^{-\epsilon}(\lambda_\nu) = \emptyset$ and
  \begin{equation*}
    D_{\nu,\epsilon} = \{x \in U \colon \F(x)=0\} = 0,
  \end{equation*}
  and the Lemma reduces to the claim that $U_0^\epsilon(\lambda_\nu)$
  deformation retracts to a point.  Furthermore, the set defined in
  \eqref{eq:U+-} is
  $U_0^{-\epsilon,+\epsilon}(\lambda_\nu)= U_0^\epsilon(\lambda_\nu)$.
  In the proof of Lemma~\ref{homlem1}, we will see that
  $U_0^\epsilon(\lambda_\nu)$ is diffeomorphic to the intersection of
  a smooth $s(\nu)$-dimensional manifold through 0 with a small ball around
  0.  Therefore, we obtain that the normal Morse data in
  \eqref{eq:normal_data_def} is homeomorphic to the pair
  \begin{equation}
    \label{eq:normal_data_top}
    (J,K) = (\mathbb{B}^{s(\nu)}, \emptyset).
  \end{equation}
\end{remark}

\begin{proof}[Proof of Lemma~\ref{homlem1}]
  Since $\F(0)=0 \in \Sym_\nu$, the eigenspace $\esp_k$ in
  \eqref{eq:HF_matrix} is the whole space $\R^\nu$ and
  $\cH_0 = d\F(0)$.  From condition (N) we get that
  $\Ran d\F(0) = \Span\{B\}^\perp$ with $B\in \Sym_\nu^{++}$.  By the
  definition of $s(\nu)$ and dimension counting we conclude that
  $d\F(0) \colon \R^{s(\nu)} \to \Sym_\nu$ is injective (since $\F$ is
  a map between vector spaces, we can consider its differential to be
  a map between the same vector spaces).
  
  Choose a neighborhood $W$ of $0$ such that $d\F(x)$ remains close to
  $d\F(0)$ for all $x\in W$ (and, in particular, injective) and the
  suitably scaled normal to $\Ran d\F(x)$ remains close to $B$ (and, in particular,
  positive definite).  For future reference we note that, under these
  smallness conditions, $\F$ is a diffeomorphism from
  $W$ to $\F(W)$ and the latter set contains no positive or negative
  semidefinite matrices except $0$.

  Denote by $\mathfrak{B}_\delta$ the open ball in $\Sym_\nu$ of
  radius $\delta$ around the origin in the \emph{operator norm}.
  Choose $\delta$ sufficiently small so that
  $\partial \mathfrak{B}_\delta \cap \F(\partial W) = \emptyset$.
  This is possible because $d\F(0)$ is injective and the operator norm
  on $\F(\partial W)$ is bounded from below.  Now we take
  $U = \F^{-1}\big(\mathfrak{B}_\delta \cap \F(W)\big)$.  This set is
  non-empty because it contains $0$; it has the useful property that
  the operator norm (equivalently, spectral radius) of $\F(x)$ is
  equal to $\delta$ for $x\in\partial U$ and is strictly smaller than
  $\delta$ on $U$.

  Given a matrix $F_0 \in \F\left( U_0^{\epsilon}(\lambda_k) \right)$ we
  will describe the retraction trajectory $\Gamma_{F_0}(t)$,
  $t\in[0,1]$, starting at $F_0$.  The trajectory will be piecewise
  smooth, with the pieces described recursively. Define, for
  $m \leq k$,
  \begin{equation}
    \label{eq:Gap_set}
    G^k_m := \left\{F \in \Sym_\nu \colon \ldots < \lambda_m(F) =
      \ldots = \lambda_k(F) \leq \ldots \right\},
  \end{equation}
  which is the set of matrices with a gap below the eigenvalue
  $\lambda_m(F)$ but no gap between $\lambda_m(F)$ and
  $\lambda_k(F)$.  It is easy to see that
  \begin{equation}
    \label{eq:boundaryGap_set}
    \overline{G^k_m} \setminus G^k_m
    = \bigcup_{1\leq m' < m} G^k_{m'},
  \end{equation}
  which we will call the \term{egress set} of $G^k_m$.

  Assume that $\lambda_k(F_0) > -\epsilon$ and that
  $F_0 \in G^k_{m_0}$ for some $m_0>1$; let $t_0=0$.  Define two complementary
  spectral (Riesz) projectors corresponding to $F_0$,
  \begin{equation}
    \label{eq:projectors_def}
    P_- := P\big(\{\lambda < \lambda_k(F_0)\}\big),
    \qquad
    P_+ := P\big(\{\lambda \geq \lambda_k(F_0)\}\big),    
  \end{equation}
  and consider the affine plane in $\Sym_\nu$ defined by
  \begin{equation}
    \label{eq:gamma_submanifold}
    \{F_0 - 2\epsilon s P_+
    + r P_- \colon s,r \in \R \}.
  \end{equation}
  Since the projector $P_+ \in \Sym_\nu^+$ is non-zero and the normal to
    $d\F(x)$ is positive definite locally around $x=0$, this affine plane is
  transverse  to $\F(U)$ in $\Sym_\nu$.
  Their intersection is nonempty because it contains $F_0$ and thus,
  by the Implicit Function Theorem, it is a 1-dimensional embedded
  submanifold of $\F(U)$. Denote by $\gamma_{F_0}^{m_0}$ the connected
  component of the intersection that contains $F_0$.
    
  Furthermore, implicit differentiation of the equation
  $F_0 - 2\epsilon s P_+ + r P_- = \F(x)$ at a point
  $\F(x) \in \gamma_{F_0}^{m_0}$ shows that
  \begin{equation}
    \label{eq:dr}
    \frac{dr}{ds} = 2\epsilon \frac{\langle B_x,
      P_+\rangle}{\langle B_x, P_-\rangle},
    \qquad r(0)=0,
  \end{equation}
  where $B_x$ is either positive or negative definite symmetric matrix
  that spans the orthogonal complement to the differential $d\F(x)$.
  Since $\langle B_x, P_-\rangle>0$ for any $x\in U$, the set of
  points of $\gamma_{F_0}^{m_0}$ where $\gamma_{F_0}^{m_0}$ can be
  locally represented as a function of $s$ is both open and closed in
  the subspace topology of $\gamma_{F_0}^{m_0}$.  We conclude that
  $\gamma_{F_0}^{m_0}$ can be represented by a function of $s$
  \emph{globally}, i.e.\ as long as the closure of
  $\gamma_{F_0}^{m_0}$ in $\mathrm {Sym}_\nu$ does not hit the
  boundary of $\F(U)$.  In a slight abuse of notation, we will refer
  to this function as $\gamma_{F_0}^{m_0}$.
  Figure~\ref{fig:homotopy}(left) shows examples of the curves
  $\gamma_{F_0}^1(s)$ for the family $\F_1$ from
  equation~\eqref{eq:two_cones} and two different initial points
  $F_0$.
  
  \begin{figure}
    \centering
    \includegraphics{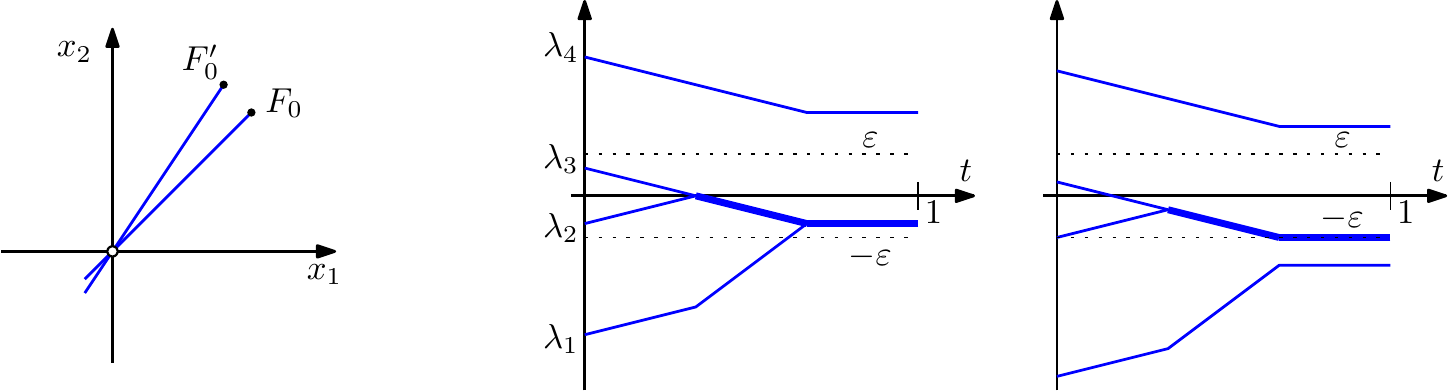}
    \caption{Left: the curves $\gamma_{F_0}^2(t)$ for $k=2$ and $\F_1$
      from equation~\eqref{eq:two_cones}, for a pair of initial
      points $F_0$.  The curves are shown in the 2-dimensional plane
      $\F_1(U)$.  The egress set for $G_2^2$ is the point $(0,0)$.
      Note that the curves intersect on the egress set, which is the
      reason we chose to specify the flow rather than the vector
      field.  Middle and right: evolution of the eigenvalues of
      $\Gamma_{F_0}(t)$ for a pair of $F_0$ with $k=3$ and the family
      $\F(U)=\{F\in\Sym_4: \Tr(F)=0\}$.  Egress points correspond to
      points where $\lambda_k$ increases its multiplicity (the latter
      is shown with thicker lines).}
    \label{fig:homotopy}
  \end{figure}

  The matrices on the curve $\gamma_{F_0}^{m_0}(s)$ have fixed
  eigenspaces but their eigenvalues change with $s$.  For small
  positive $s$ the eigenvalues $\lambda_{m_0} = \lambda_k$ and above
  decrease with the constant speed $2\epsilon$ while the eigenvalues
  below $\lambda_{m_0}$ increase because the derivative in
  \eqref{eq:dr} is positive.  This closes the gap below the eigenvalue
  $\lambda_{m_0}$ and decreases the spectral radius (operator norm) of
  $\gamma_{F_0}^m(s)$.  Therefore, the curve will intersect the egress
  set \eqref{eq:boundaryGap_set} at some time $\hat{s}>0$ \emph{before} it
  reaches the boundary $\F(\partial U)$.
  
  Setting $F_1 := \gamma_{F_0}^{m_0}(\hat{s})$ and $t_1=t_0+\hat{s}$, we determine
  $m_1 < m_0$ such that $F_1\in G^k_{m_1}$ and repeat the
  process starting at $(t_1,F_1)$.  We then join the pieces together,
  \begin{equation}
    \label{eq:Ft_def}
    \Gamma_{F_0}(t) = \gamma_{F_j}^{m_j}(t-t_j), \qquad t_j \leq t \leq t_{j+1}.
  \end{equation}

  There are two ways in which we will terminate this recursive process.
  If an egress point $F_n \in G^k_1$ is reached (which has no
  eigenvalues strictly smaller than $\lambda_k(F_n)$ and equation
  \eqref{eq:dr} becomes undefined due to $P_-=0$), we continue
  $\Gamma_{F_0}$ as a constant, $\Gamma_{F_0}(t) = F_n$ for
  $t\geq t_n$.  An example of this is shown in
  Figure~\ref{fig:homotopy}(middle).  The case $m_0=1$ which we
  previously excluded can now be absorbed into this rule.

  Alternatively, since $\lambda_k$ decreases from an initial value
  below $\epsilon$ at the constant rate $2\epsilon$, we will reach a
  point in $U_0^{-\epsilon}(\lambda_k)$ at some $\tilde{t}\leq1$.  In
  this case we also continue $F(t)$ as a constant for $t>\tilde{t}$,
  see Figure~\ref{fig:homotopy}(right) for an example (with
  $\tilde{t}=2/3$ in this particular case).  The case
  $\lambda_k(F_0) \leq -\epsilon$ can now be absorbed into the above
  description by setting $\tilde{t} = 0$.

  The preceding paragraphs show that the final values
  $\F^{-1}\left(\Gamma_{F_0}(1)\right)$ belong to the set
  $\Deps\cup U_0^{-\epsilon}(\lambda_k)$, see equation \eqref{eq:def_retract}, and that
  $x \mapsto \Gamma_{\F(x)}(t)$ acts as identity on $\Deps\cup U_0^{-\epsilon}(\lambda_k)$
  for all $t$.  This suggest that we have a
  deformation retraction
  \begin{equation}
    \label{eq:retraction_spec}
    (x,t) \mapsto \F^{-1} \left(\Gamma_{\F(x)}(t)\right),
  \end{equation}
  if we establish that the trajectories $\Gamma_{F}(t)$ define a
  continuous mapping $\F(U) \times [0,1] \to \F(U)$.

  We first note that each trajectory is continuous in $t$ by
  construction.  Therefore, we need to show that starting at a point
  $F'$ which is near $F$ will result in $\Gamma_{F'}(t)$ being near
  $\Gamma_{F}(t)$.  A perturbation of arbitrarily small norm may split
  multiple eigenvalues, therefore if $F \in G_{m}^k$ with $m < k$,
  then, in general, $F' \in G^k_{m'}$ with $m\leq m'$ (in fact,
  generically, $m'=k$).  However,
  \begin{align*}
    \left|\lambda_{m}(F') - \lambda_k(F')\right|
    &= \left|\lambda_{m}(F') - \lambda_{m}(F)\right|
    + \left|\lambda_k(F) - \lambda_k(F')\right| \\
    &\leq C |F - F'|,  
  \end{align*}
  with some $F$-independent\footnote{The constant is independent of $F$ but
    may depend on the norm used for $F$; in case of the operator norm,
    Weyl inequality yields $C=2$.} constant $C$,
  and therefore after a time of order $C |F - F'| / 2\epsilon$, the
  $k$-th eigenvalue $\Gamma_{F'}$ will collide with $m$-th eigenvalue.
  To put it more precisely, there is $\tau$,
  $0 < \tau \leq C |F - F'| / 2\epsilon$, such that
  $\Gamma_{F'}(\tau) \in G^k_{m}$.  By choosing $|F - F'|$ to be
  sufficiently small (while $\epsilon$ is small but fixed), we
  ensure that $\Gamma_{F}(\tau)$ is still in $G^k_{m}$.  By noting
  that the trajectories $\Gamma_{F'}(t)$ are continuous in $t$
  uniformly with respect to $F'$, we conclude that $\Gamma_{F'}(\tau)$
  is close to $\Gamma_{F}(\tau)$.

  For two initial points $F$ and $F'$ in the same set $G_{m}^k$, the
  curves $\gamma_{F}^{m}(s)$ and $\gamma_{F'}^{m}(s)$ will remain
  nearby for any bounded time $s<1$.  This can be seen, for example,
  as stability of the  transverse  intersection of the manifold $\F(U)$
  and the manifold \eqref{eq:gamma_submanifold}.  The stability is
  with respect to the parameters $F$, $P_+$ and $P_-$ and the spectral
  projections are continuous in $F$ precisely because $F'$ belongs the
  same set $G_{m}^k$.

  We now chain the two argument in the alternating fashion: short
  time to bring two points to the same set $G^k_{m}$, long time
  along smooth trajectories until one of the trajectories reaches
  an egress point, then short time to bring them to the same set
  $G^k_{m_1}$ and so on.  Since we iterate a bounded number of times,
  the composition is a continuous mapping.
\end{proof}

As before (see the paragraph before the formulation of Theorem
  \ref{thm:homology_geom}), let $\mathcal{C}Y$ and  $\mathcal S Y$ be the cone and the suspension of a topological space $Y$. Also let
$\Sigma Y=\mathcal S Y/ \bigl(\{y_0\}\times I\bigr)$ be the \term{reduced
  suspension} of $Y$, where $y_0\in Y$. Note that if $Y$ is a
CW-complex, then $\Sigma Y$ is homotopy equivalent to $\mathcal S Y$.
In Lemma~\ref{homlem1} we saw that $U_0^{\epsilon}(\lambda_k)$ is
homotopy equivalent to the union of $U_0^{-\epsilon}(\lambda_k)$ and the
space $\Deps$ which we aim to understand further.  We will now
show that $\Deps$ is a cone of the space $\mathcal{R}_\nu^i$ introduced in
equation~\eqref{eq:hatR_def}.

In the present setting (namely, $\F(0)=0 \in \Sym_\nu(\mathbb{F})$),
the relative index $i$ is related to $k$ via $i = \nu - k + 1$, cf.\
\eqref{eq:relative_index_def}.  Notationally, it will be more
convenient to use $k$ instead of $i$, so we introduce a slight change
in the notation, 
\begin{equation}
  \label{eq:base}
  \mathcal{R}_\nu^i = \mathcal R_{k, \nu}
  :=
  \{R\in \Sym^+_\nu \colon \Tr R=1, \dim\Ker R \geq k\}.
\end{equation}

\begin{lemma}
  \label{lem:homlem2}
  Let $\F$, $U$ and $\Deps$ be as in Lemma~\ref{homlem1}.  Then,
  for sufficiently small $\epsilon>0$, the topological space
  $\Deps$ is homeomorphic to $\mathcal{C} \mathcal{R}_{k,\nu}$
  and the topological space
  \begin{equation}
    \label{eq:glue-in-quotioned}
    \left(U_0^{-\epsilon}(\lambda_k)
      \cup
      \Deps \right)
    / U_0^{-\epsilon}(\lambda_k),
    \qquad
    1 \leq k < \nu,
  \end{equation}
  is homeomorphic to $\mathcal{S} \mathcal{R}_{k,\nu}$.
\end{lemma}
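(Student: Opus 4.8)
The plan is to realize $\Deps$ explicitly as the continuous bijective image of $\mathcal{C}\mathcal{R}_{k,\nu}$ inside a compact Hausdorff subspace of $\Sym_\nu$, via a ``polar decomposition'' of matrices adapted to the stratum $\{\lambda_1=\dots=\lambda_k\}$. First I recall from the construction in the proof of Lemma~\ref{homlem1} the properties of $U$: $\F|_U$ is a diffeomorphism onto the hypersurface $\F(U)=\mathfrak{B}_\delta\cap\F(W)$; $\F(U)$ contains no nonzero sign-semidefinite matrix; the operator norm of $\F(x)$ is $<\delta$ for $x\in U$, $=\delta$ on the part of $\overline{\F(U)}\setminus\F(U)$ that matters, with $\partial\mathfrak B_\delta\cap\F(\partial W)=\emptyset$; and at each $x\in U$ the line $\bigl(\Ran d\F(x)\bigr)^\perp=T_{\F(x)}\F(U)^\perp$ is spanned by a positive definite $B_x$ depending continuously on $x$, so after shrinking $U$ the eigenvalues of $B_x$ lie in a fixed $[m,M]\subset(0,\infty)$. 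Since $\F|_U$ is a homeomorphism onto its image it suffices to treat $\F(\Deps)$ in place of $\Deps$. The decomposition: for $F\in\F(\Deps)$, $F\neq0$, set $t:=-\lambda_k(F)$, so $\lambda_1(F)=\dots=\lambda_k(F)=-t$ with $t\in(0,\epsilon]$ (strict positivity because $F\neq0$ is not semidefinite); then $G:=F+tI\in\Sym_\nu^+$ has $\dim\Ker G\ge k$ and $G\neq0$, so $\rho:=\Tr G>0$, $R:=G/\rho\in\mathcal{R}_{k,\nu}$ (see \eqref{eq:base}), and $F=\rho R-tI$. The map $F\mapsto(R,t)$, together with $0\mapsto$ the cone vertex, will be the inverse of the sought homeomorphism, provided that for every $(R,t)\in\mathcal{R}_{k,\nu}\times(0,\epsilon]$ there is a \emph{unique} $\rho>0$ with $\rho R-tI\in\F(U)$, depending continuously on $(R,t)$.

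This is where transversality enters. The plane $\Pi_R:=\Span\{R,I\}$ is $2$-dimensional ($R$ has a kernel, so $R\notin\Span\{I\}$) and transverse to $\F(U)$ at every intersection point $F$: $T_F\F(U)=\Span\{B_x\}^\perp$ has codimension $1$ and does not contain $I$ since $\langle I,B_x\rangle=\Tr B_x>0$, hence $T_F\F(U)+\Span\{I\}=\Sym_\nu$ and a fortiori $T_F\F(U)+\Pi_R=\Sym_\nu$. So $\Pi_R\cap\F(U)$ is a $1$-manifold; in the coordinates $(\rho,t)\leftrightarrow\rho R-tI$ on $\Pi_R$ the $I$-direction is transverse to it, $\rho$ is a local coordinate, and the component $c_R$ through the origin is a graph $t=t_R(\rho)$. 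Implicit differentiation of $\F(x(\rho))=\rho R-t_R(\rho)I$ gives $R-t_R'(\rho)I\in\Span\{B_{x(\rho)}\}^\perp$, so $t_R'(\rho)=\langle R,B_{x(\rho)}\rangle/\Tr B_{x(\rho)}\ge m\,\Tr R/(\nu M)=m/(\nu M)=:c_0>0$, using $R\in\Sym_\nu^+$, $\Tr R=1$; thus $t_R$ is strictly increasing with $t_R(0)=0$ and $t_R(\rho)\ge c_0\rho$. A short operator-norm computation --- the extreme eigenvalues of $\rho R-t_R(\rho)I$ are $-t_R(\rho)$ and $\rho\lambda_{\max}(R)-t_R(\rho)$ with $0<\lambda_{\max}(R)\le1$, while a point of $\overline{\F(U)}\setminus\F(U)$ has operator norm exactly $\delta$ --- shows that once $\epsilon<c_0\delta$ the curve $c_R$ meets the level $\{t=\epsilon\}$ at a unique parameter $t_R^{-1}(\epsilon)$ strictly before leaving $\F(U)$ and that its maximal $\rho$-interval is bounded. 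Hence $t_R$ restricts to a homeomorphism $[0,t_R^{-1}(\epsilon)]\to[0,\epsilon]$ and $\rho:=t_R^{-1}(t)$ is the required value. For uniqueness it remains to see that every $F=\rho R-tI\in\F(\Deps)$ lies on $c_R$ itself: $F$ lies on some component $\tilde c$ of $\Pi_R\cap\F(U)$, again a graph with $t$ strictly increasing in $\rho$; were $\tilde c$ to miss the origin, letting $\rho$ decrease to the left endpoint of its finite interval would produce a limit point $\rho^-R-t^-I\in\overline{\F(U)}\setminus\F(U)$ of operator norm $\delta$ with $t^-<t\le\epsilon<\delta$, and the same bookkeeping (with $t^-\ge c_0\rho^-$) forces $\rho^-\le0$; since $0$ is the only scalar matrix in $\F(U)$, the value of $\tilde c$ over $\rho=0$ must then be $0$, i.e.\ $\tilde c=c_R$.

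It remains to assemble the homeomorphism and pass to the suspension. Define $\overline\Theta\colon\mathcal{C}\mathcal{R}_{k,\nu}\to\F(\Deps)$ by $\overline\Theta([R,\tau]):=t_R^{-1}(\tau\epsilon)R-\tau\epsilon I$ for $\tau>0$ and $\overline\Theta(\text{vertex}):=0$; by the previous paragraph this is a well-defined bijection with inverse $F\mapsto[R(F),-\lambda_k(F)/\epsilon]$. Continuity of $\overline\Theta$ reduces to continuity of $(R,t)\mapsto t_R^{-1}(t)$: the coefficients of the ODE $\dot x(\rho)=\bigl(d\F(x(\rho))\bigr)^{-1}\bigl[R-t_R'(\rho)I\bigr]$, $t_R'(\rho)=\langle R,B_{x(\rho)}\rangle/\Tr B_{x(\rho)}$, depend smoothly on $(x,R)\in U\times\Sym_\nu$, so $x(\rho;R)$ and $t_R(\rho)$ are jointly continuous uniformly for $R$ in the compact set $\mathcal{R}_{k,\nu}$ and $\rho\in[0,\epsilon/c_0]$, whence $t_R^{-1}(t)$ is continuous because $\partial_\rho t_R\ge c_0>0$; at $\tau=0$ the bound $\|\overline\Theta([R,\tau])\|\le\tau\epsilon(c_0^{-1}+\sqrt\nu)$ gives continuity at the vertex. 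As $\mathcal{C}\mathcal{R}_{k,\nu}$ is compact and $\F(\Deps)\subset\Sym_\nu$ Hausdorff, $\overline\Theta$ is a homeomorphism; composing with $(\F|_U)^{-1}$ gives $\Deps\cong\mathcal{C}\mathcal{R}_{k,\nu}$. Moreover $\overline\Theta$ sends $\mathcal{R}_{k,\nu}\times\{1\}$ onto $\{F\in\F(U)\colon\lambda_1(F)=\dots=\lambda_k(F)=-\epsilon\}=\F\bigl(\Deps\cap U_0^{-\epsilon}(\lambda_k)\bigr)$. Since $\Deps$ and $U_0^{-\epsilon}(\lambda_k)$ are both closed in $X:=U_0^{-\epsilon}(\lambda_k)\cup\Deps$ (each a preimage in $U$ of a closed set under continuous functions built from $\F$), the canonical continuous bijection $\Deps/\bigl(\Deps\cap U_0^{-\epsilon}(\lambda_k)\bigr)\to X/U_0^{-\epsilon}(\lambda_k)$ is a closed map, hence a homeomorphism; composing with $\overline\Theta$ identifies $X/U_0^{-\epsilon}(\lambda_k)$ with $\mathcal{C}\mathcal{R}_{k,\nu}/\bigl(\mathcal{R}_{k,\nu}\times\{1\}\bigr)=\mathcal{S}\mathcal{R}_{k,\nu}$ for $1\le k<\nu$.

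The main obstacle is the global existence, uniqueness and continuity of $\rho=\rho(R,t)$ solving $\rho R-tI\in\F(U)$: it rests on three facts used in concert --- transversality of every $\Pi_R$ to the hypersurface $\F(U)$, the \emph{uniform} strict monotonicity $t_R'\ge c_0>0$ coming from positive-definiteness of the normal $B_x$, and the operator-norm estimate which, for $\epsilon$ small relative to $\delta$, keeps $c_R$ inside $\F(U)$ until $t$ reaches $\epsilon$ and pins every point of $\F(\Deps)$ onto the component through $0$. The continuous dependence of $t_R^{-1}(t)$ on $R$ over the singular variety $\mathcal{R}_{k,\nu}$ is a secondary point dispatched by compactness, and the passage from the cone statement to the suspension statement is the routine quotient identity $(A\cup B)/A\cong B/(A\cap B)$ for closed $A,B$.
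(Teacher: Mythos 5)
Your proof is correct and takes essentially the same route as the paper's: both work with $\F(\Deps)$ via the diffeomorphism $\F|_U$, both intersect $\F(U)$ with the plane $\Span\{R,I\}$ and use the transversality coming from positive definiteness of the normal $B_x$ to realize the intersection as a graph, and both define the cone homeomorphism via this parametrization and pass to the suspension by the standard quotient identity. Your version differs only cosmetically (using $\rho$ rather than $t$ as the graph parameter and spelling out the uniform bound $t_R'\ge c_0>0$ and the continuity argument in more detail), but the key mechanism is identical.
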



\begin{proof}
  The choice of $U$ ensured that $\F$ is a homeomorphism from
  $\Deps \subset U$ to
  \begin{equation}
    \label{eq:F-glue-in}
    \F(\Deps)
    := \big\{ F \in \F(U): -\epsilon \leq \lambda_1(F) = \ldots =
    \lambda_k(F) \leq 0 \big\}.
  \end{equation}
  We will now describe the homeomorphism from
  $\mathcal{C} \mathcal{R}_{k,\nu}$ to $\F(\Deps)$.

  Given a point $R \in \mathcal{R}_{k,\nu}$, consider the intersection
  of $\F(U)$ with the plane
  \begin{equation}
    \label{eq:plane_for_cone}
    \left\{ -\epsilon t I + r R \colon t, r \in \R \right\}.
  \end{equation}
  Mimicking the proof of Lemma~\ref{homlem1}, we conclude that the
  intersection is a 1-dimensional submanifold which has a connected
  component $\phi_R$ containing the matrix $0$.  Moreover, implicit
  differentiation at $\F(x) \in \phi_R$ yields
  \begin{equation}
    \label{eq:tangent_condition_DE}
    \frac{dr}{dt} =
    \epsilon \frac{\langle B_x, I\rangle}{\langle B_x,R\rangle},
  \end{equation}
  therefore the submanifold can be represented by a function of $t$,
  \begin{equation}
    \label{eq:homeo_description}
    \Phi(R,t) = -\epsilon t I + r(t) R,
    \qquad r(0)=0.
  \end{equation}
  When $t\in[0,1]$, we also have $\Phi(R,t) \in \F(\Deps)$
  because equation \eqref{eq:tangent_condition_DE} implies
  $r(t) \geq 0$.  We remark that $\langle B_x,R\rangle$ is bounded
  away from zero uniformly in $x\in U$ and $R \in
  \mathcal{R}_{k,\nu}$, therefore, when $\epsilon$ is sufficiently
  small, $\Phi(R,t)$ will remain in $\F(U)$ until $t>1$.
  Thus the function $\Phi$ is a well-defined\footnote{Namely,
    $\Phi(R,0)=0$ for all $R$.} mapping from
  $\mathcal{C} \mathcal{R}_{k,\nu}$ to $\F(\Deps)$.  It is
  evidently continuous.

  The properties of $\F$ imply that $\F(U)$ contains no multiples of
  identity and no positive semidefinite matrices except for the zero
  matrix.  Therefore, for every $F\in \F(\Deps)$, $F\neq 0$,
  \begin{equation}
    \label{eq:RF}
    R_F := \frac{F - \lambda_1(F)I}{\Tr\big(F - \lambda_1(F)I\big)}
    \in \mathcal{R}_{k,\nu},
  \end{equation}
  is well-defined, and we also have $-\epsilon \leq \lambda_1(F) < 0$.
  Thus
  \begin{equation}
    \label{eq:Phi_inverse}
    \Phi' \colon F \mapsto
    \begin{cases}
      \left( R_F, -\frac{\lambda_1(F)}{\epsilon} \right),
      & \text{if } F \neq 0, \\
      (*, 0), & \text{if } F=0,
    \end{cases}
  \end{equation}
  is a well-defined continuous mapping from $\F(\Deps)$ to
  $\mathcal{C} \mathcal{R}_{k,\nu}$.  It remains to verify that
  $\Phi'$ is the inverse of $\Phi$.  It is immediate that
  $\Phi' \circ \Phi = \mathrm{id}$.  To prove that
  $\Phi \circ \Phi' = \mathrm{id}$ we observe that the intersection
  $\phi_{R_F}$, corresponding to $R_F$ of equation~\eqref{eq:RF},
  contains $F$; we only need to show that $F$ and $0$ belong to the
  same connected component of $\phi_R$.

  The point $F$ on the plane \eqref{eq:plane_for_cone} corresponds to
  $t = -\lambda_1(F)/\epsilon > 0$ and some $r=r'$.  Decreasing $t$
  from this point decreases $r(t)$ and therefore decreases the
  operator norm of $\Phi$.  Thus we will not hit the boundary of
  $\F(\Deps)$ as long as $t\geq 0$.  Therefore, we will arrive at
  the matrix $0$ while staying on the same connected component.

  We have established the first part of the lemma.  To understand the
  quotient in \eqref{eq:glue-in-quotioned}, we note that
  \begin{align*}
    \left(U_0^{-\epsilon}(\lambda_k)
      \cup
      \Deps \right)
    / U_0^{-\epsilon}(\lambda_k)
    &= \Deps / \left(U_0^{-\epsilon}(\lambda_k)
      \cap
      \Deps \right) \\
    &\cong
    \F(\Deps) /
    \big\{ F \in \F(\Deps): -\epsilon = \lambda_1(F) = \ldots =
      \lambda_k(F) \big\} \\
    &= \F(\Deps) / \big\{\Phi(R,1) \colon R \in
    \mathcal{R}_{k,\nu} \} \cong \mathcal{S} \mathcal{R}_{k,\nu},
  \end{align*}
  completing the proof.
\end{proof}

\begin{proof}[Proof of Theorem~\ref{thm:homology_geom},
  part~\ref{item:Hvia_Rspace}]
  We review how the preceding lemmas link together to give the proof
  of the theorem.  Lemma~\ref{lem:GM_factorization} shows that the
  smooth part $\F\big|_S$ gives the classical contribution to the
  sublevel set quotient and we can focus on understanding the
   transverse part $\F\big|_N$.  We remark that by
  Corollary~\ref{cor:transversal_cut_of_CP} the point $x$ remains
  non-degenerate topologically critical when we replace $\F$ with
  $\F\big|_N$.

  Combining Lemmas~\ref{lem:GM_factorization}, \ref{homlem1}, and
  \ref{lem:homlem2}, we compute the $r$-th homology group
  \begin{align}
    \label{eq:homology_bring_together}
    H_r\Big(U_x^{+\epsilon}(\lambda_k),\,
    U_x^{-\epsilon}(\lambda_k)\Big)
    &\cong
      H_{r-\mu(x)}\Big(U_x^{+\epsilon}(\lambda_k|_N),\,
      U_x^{-\epsilon}(\lambda_k|_N)\Big)
    \\ \nonumber
    &\cong
      H_{r-\mu(x)}\Big(U_x^{-\epsilon}(\lambda_k|_N) \cup \Deps,\,
      U_x^{-\epsilon}(\lambda_k|_N)\Big)
    \\ \nonumber
    &\cong \widetilde{H}_{r-\mu(x)}\Big(
    \big(U_x^{-\epsilon}(\lambda_k|_N) \cup \Deps\big) \big/
      U_x^{-\epsilon}(\lambda_k|_N)\Big)
    \\ \nonumber
    &\cong
      \widetilde{H}_{r-\mu(x)}\Big(\mathcal{S}\mathcal{R}_{k,\nu}\Big).
  \end{align}
  Taking into account \eqref{eq:base}, we obtain the claim for
  $k < \nu$.  The answer for $k=\nu$ (equivalently, $i=1$) was already
  established in Remark~\ref{rem:boundary_cases}.
\end{proof}

\section{Topological change in the sublevel sets:
  part~(\ref{item:Hvia_Grass}) of Theorem~\ref{thm:homology_geom}}
\label{sec:sublevel_change2}

We will go from part~(\ref{item:Hvia_Rspace}) to
part~(\ref{item:Hvia_Grass}) of Theorem~\ref{thm:homology_geom} by
relating the space $\mathcal R_{k, \nu}$ to the Thom space of a
particular vector bundle.  Recall \cite{Milnor-Stasheff} that the
\term{Thom space} $\cT(E)$ of a real vector bundle $E$ over a manifold
is the quotient of the unit ball bundle $\B(E)$ of $E$ by the unit
sphere bundle of $E$ with respect to some Euclidean metric on $E$. If
the base manifold of the bundle $E$ is compact, then the Thom space of
$E$ is the Alexandroff (one point) compactification of the total space
of $E$.  As before, we denote by $\Gr_\FF(k,n)$ the Grassmannian of
(non-oriented) $k$-dimensional subspaces in $\FF^n$.



Given a vector bundle $E$ denote by $S^2 E$ the \term{symmetric tensor
  product} of $E$.  Namely, $S^2 E$ is the vector bundle over the same
base as $E$; the fiber of $S^2 E$ over a point is equal to the
symmetric tensor product with itself of the fiber of $E$ over the same
point.  Choosing a Euclidean metric on $E$ we can identify $S^2 E$
with the bundle whose fiber over a point is the space of all
self-adjoint isomorphisms of the fiber of $E$ over the same point.
Then by $S^2_0 E$ we denote the bundle of traceless elements of
$S^2 E$.  Obviously
\begin{equation}
  \label{trivial1}
  S^2 E\cong S^2_0 E \oplus \theta^1,
\end{equation}
where $\theta^1$ is the trivial rank 1 bundle over the base of $E$.
Finally, let $\Taut_\FF(k,n)$ be the tautological bundle over the
Grassmannian $\Gr_\FF(k,n)$: the fiber of this bundle over
$\Lambda \in \Gr_\FF(k,n)$ is the vector space $\Lambda$ itself.

\begin{lemma}
  \label{lem:Agrachev}
  Recall the relative index $i$ of the eigenvalue, equation
  \eqref{eq:relative_index_def}, which in the present situation is
  equal to $i=\nu-k+1$.  Then the space
  \begin{equation*}
    \mathcal R_{k, \nu}
    :=
    \{R\in \Sym^+_\nu \colon \Tr R=1, \dim\Ker R \geq k\}
  \end{equation*}
  with $1\leq k < \nu$ is homotopy equivalent to the Thom space of the
  real vector bundle over the Grassmannian $\Gr_\FF(i-1,\nu-1)$,
  \begin{equation}
    \label{explicitE}
    E_{i,\nu} :=
    S^2_0 \Taut_\FF(i-1,\nu-1) \oplus \Taut_\FF(i-1,\nu-1).
  \end{equation}
  The rank of the bundle is $s(i)-1$, where $s(i)$ is given by
  \eqref{eq:codim_nu}.  The bundle is non-orientable if $\FF=\R$
  and $i$ is even, and orientable otherwise.
\end{lemma}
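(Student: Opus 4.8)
The plan is to realize $\mathcal R_{k,\nu}$, up to homotopy, as the one-point compactification of the total space of $E_{i,\nu}$; since the base $\Gr_\FF(i-1,\nu-1)$ is compact, that compactification is exactly $\cT(E_{i,\nu})$. Throughout write $i=\nu-k+1$ (so $2\le i\le\nu$ under the hypothesis $1\le k<\nu$), fix a unit vector $e\in\FF^\nu$, put $H:=e^{\perp}\cong\FF^{\nu-1}$, and let $P_H$, $P_e$ be the orthogonal projections onto $H$ and onto $\FF e$. The central object is the open subset
\[
  \mathcal U:=\bigl\{R\in\mathcal R_{k,\nu}\colon \rank R=i-1,\ e\notin\Ran R\bigr\}\subset\mathcal R_{k,\nu}
\]
(open because $i-1$ is the largest rank occurring in $\mathcal R_{k,\nu}$, and $e\notin\Ran R$ is an open condition among matrices of rank $i-1$), with complement $\mathcal A:=\mathcal R_{k,\nu}\setminus\mathcal U$.

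First I would show that $\mathcal U$ is diffeomorphic to the total space of $E_{i,\nu}$. For $R\in\mathcal U$ the plane $\Ran R$ has dimension $i-1$ and is transverse to $\FF e$, so $P_H$ carries it isomorphically onto $\Lambda:=P_H(\Ran R)\in\Gr_\FF(i-1,\nu-1)$, and $\Ran R$ is the graph of a linear map $\Lambda\to\FF e$, equivalently (via the metric) a vector $w\in\Lambda$, i.e.\ a point of $\Taut_\FF(i-1,\nu-1)$. The remaining datum is the compression $R|_{\Ran R}$, a positive-definite operator of trace $1$; the map $\rho\mapsto\log\rho-\tfrac1{i-1}\Tr(\log\rho)\,I$ is a diffeomorphism, equivariant under unitary conjugation, with inverse $X\mapsto e^{X}/\Tr e^{X}$, from the positive-definite trace-$1$ operators onto the traceless self-adjoint operators, so it turns $R|_{\Ran R}$ into a point of $S^2_0\Taut_\FF(i-1,\nu-1)$. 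The inverse map reconstructs $R$ from $(\Lambda,w,X)$ as the normalization of $\iota_w\rho_X\iota_w^{*}$, where $\iota_w\colon\Lambda\xrightarrow{\ \sim\ }\Ran R$ is the graph isomorphism and $\rho_X=e^{X}/\Tr e^{X}$. Assembling over $\Gr_\FF(i-1,\nu-1)$ yields a diffeomorphism $\mathcal U\cong E_{i,\nu}$ covering the bundle projection.

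Second I would contract $\mathcal A=\{\rank R\le i-2\}\cup\{\rank R=i-1,\ e\in\Ran R\}$ by the affine homotopy $K(R,t):=(1-t)R+t\,P_e$: it preserves the trace (equal to $1$) and positive semidefiniteness, and $\Ran K(R,t)\subseteq\Ran R+\FF e$, so if $\rank R\le i-2$ the rank stays at most $i-1$, while if $e\in\Ran R$ the range does not grow; in either case $K$ stays inside $\mathcal A$, with $K(\cdot,0)=\mathrm{id}$ and $K(\cdot,1)\equiv P_e$. (This $K$ does not extend to $\mathcal U$, since adding $P_e$ to a rank-$(i-1)$ matrix whose range avoids $e$ raises the rank to $i$.) To assemble the pieces: $\mathcal R_{k,\nu}$ is compact, being closed and bounded in $\Sym_\nu$, and $\mathcal A$ is closed, so $\mathcal R_{k,\nu}/\mathcal A$ is compact Hausdorff, contains $\mathcal U$ as a dense open subset with one-point complement, and is therefore the one-point compactification of $\mathcal U$, which by the previous paragraph equals $\cT(E_{i,\nu})$; moreover $\mathcal A\subset\mathcal R_{k,\nu}$ is semialgebraic, so the pair is triangulable, the inclusion is a cofibration, and, $\mathcal A$ being contractible, the collapse $\mathcal R_{k,\nu}\to\mathcal R_{k,\nu}/\mathcal A$ is a homotopy equivalence. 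Hence $\mathcal R_{k,\nu}\simeq\cT(E_{i,\nu})$.

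The remaining assertions are computations. The real rank of $E_{i,\nu}$ is $\bigl(\dim\Sym_{i-1}(\FF)-1\bigr)+(i-1)\dim_\R\FF$, which by the explicit values in \eqref{eq:codim_nu} equals $s(i)-1$ for $\FF=\R$ and $\FF=\C$ alike. For orientability, when $\FF=\C$ the bundle is the realification of a complex bundle, hence orientable; when $\FF=\R$, writing $\tau:=\Taut_\R(i-1,\nu-1)$ and using $w_1(S^2V)=(\operatorname{rk}V+1)w_1(V)$ together with \eqref{trivial1}, one gets $w_1(E_{i,\nu})=w_1(S^2_0\tau)+w_1(\tau)=(i+1)w_1(\tau)$, the nonzero class of $H^1\bigl(\Gr_\R(i-1,\nu-1);\Z_2\bigr)$ precisely when $i$ is even (orientability being automatic in the degenerate case $i=\nu$, where the Grassmannian is a point). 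The main obstacle I anticipate is the bundle-theoretic bookkeeping in the first step: the shape datum $R|_{\Ran R}$ naturally lives on the varying subspace $\Ran R$, and matching it with a section of $S^2_0\Taut_\FF(i-1,\nu-1)$ requires carefully comparing the metric on $\Ran R$ with that on $\Lambda=P_H(\Ran R)$ -- a standard but slightly delicate point; in any event it suffices to land in a bundle isomorphic to $E_{i,\nu}$ of \eqref{explicitE}, since $\cT(\cdot)$ depends only on the isomorphism class, and the rest of the argument, including the homotopy $K$, is elementary.
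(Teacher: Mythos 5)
Your proof is correct in its essential logic, but it follows a genuinely different route from the paper's, which is modeled on Agrachev's original argument. The paper singles out the submanifold $\mathcal{P}_{k,\nu} = \bigl\{\tfrac{1}{\nu-k}P : P^2=P,\ \dim\Ker P=k,\ e\in\Ker P\bigr\}\cong\Gr_\FF(i-1,\nu-1)$ inside $\mathcal{R}_{k,\nu}$, shows that its complement $\mathcal{R}_{k,\nu}\setminus\mathcal{P}_{k,\nu}$ is contractible via the spectral-theoretic retraction $(A,t)\mapsto\phi_k\bigl((1-t)A+tP_e\bigr)$ (involving a spectral cut-off at $\lambda_k$), and then invokes the tubular neighborhood theorem to identify $\mathcal{R}_{k,\nu}$ up to homotopy with the Thom space of the normal bundle of $\mathcal{P}_{k,\nu}$; the normal bundle is computed by a short first-order argument as $\Taut\oplus S^2_0\Taut$. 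You instead decompose $\mathcal{R}_{k,\nu} = \mathcal{U}\sqcup\mathcal{A}$ at the level of open/closed subsets, identify the open stratum $\mathcal{U}$ \emph{globally} with the total space of $E_{i,\nu}$, contract $\mathcal{A}$ by the plain affine homotopy $(R,t)\mapsto(1-t)R+tP_e$, and read off the Thom space as the one-point compactification $\mathcal{R}_{k,\nu}/\mathcal{A}$. The trade-off is clear: your contraction is much more elementary than the paper's $\phi_k$ (and your verification that $K$ preserves $\mathcal{A}$, via $\Ran\bigl((1-t)R+tP_e\bigr)=\Ran R+\FF e$ for $0<t<1$, is clean), but the global vector-bundle trivialization of $\mathcal{U}$ is precisely where the delicacy you flag lives: the graph map $\iota_w\colon\Lambda\to\Ran R$ is not an isometry, so the push-forward $\iota_w^{-1}\,(R|_{\Ran R})\,\iota_w$ is not self-adjoint and does not directly land in $S^2_0\Taut$. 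The fix (replace $\iota_w$ by the unitary part of its polar decomposition, or equivalently observe that the fiber bundle is associated to the orthonormal frame bundle of $\Taut$ and hence has a canonical vector bundle structure isomorphic to $\Taut\oplus S^2_0\Taut$) is indeed standard, but it is exactly the kind of bookkeeping that the paper's normal-bundle computation avoids, since that only needs the identification at first order along the zero section. Your orientability and rank computations agree with the paper's, including the correct handling of the degenerate case $i=\nu$ where the base is a point.
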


\begin{remark}
  \label{rem:normal_data_bottom}
  Let us consider the boundary case $k=1$ or, equivalently, $i=\nu$.
  The Grassmannian $\Gr_\FF(i-1,\nu-1)$ is a single point, so the
  vector bundle $E_{\nu,\nu}$ is simply a real vector space of
  dimension $s(\nu)-1$.  Its Thom space is the one-point
  compactification of $\R^{s(\nu)-1}$, namely the sphere
  $\mathbb{S}^{s(\nu)-1}$.  Correspondingly, the cone
  $\mathcal{C}\mathcal{R}_{1, \nu}$ is homotopy equivalent to the ball
  $\mathbb{B}^{s(\nu)}$.  Therefore, we get that the normal data at
  the bottom eigenvalue is homotopy equivalent to the pair
  \begin{equation}
    \label{eq:normal_data_bottom}
    (J,K) = \left(\mathbb{B}^{s(\nu)}, \partial \mathbb{B}^{s(\nu)} \right).
  \end{equation}
\end{remark}

\begin{proof}[Proof of Lemma~\ref{lem:Agrachev}]
  The homotopy equivalence has been established in \cite[Theorem
  1]{A11} and the proof thereof.  For completeness we review the main
  steps here. 

  Fixing an arbitrary unit vector $e \in \FF^\nu$ we
  define
  \begin{equation}
    \label{eq:submanifoldP_def}
    \mathcal{P}_{k,\nu}
    := \left\{\frac1{\nu-k}P \in \Sym_\nu \colon P^2=P,\ 
      \dim\Ker P = k, \ e \in \Ker P \right\}
    \cong \Gr_\FF(\nu-k,\nu-1).    
  \end{equation}
  One can show that $\mathcal{R}_{k,\nu} \setminus
  \mathcal{P}_{k,\nu}$ is contractible: if $P_e = ee^*$ is the
  projection onto $e$, consider
  \begin{equation}
    \label{eq:Agrachev_retraction}
    (A,t) \mapsto \phi_k\big( (1-t)A + tP_e \big),
    \qquad
    A \in \mathcal{R}_{k,\nu} \setminus
    \mathcal{P}_{k,\nu},
    \quad
    t\in [0,1]
  \end{equation}
  where $\phi_k(M)$ acts on the eigenvalues of $M$ as
  \begin{equation}
    \label{eq:phi_eig_action}
    \lambda_j(M) \mapsto \max\big[0, \lambda_j(M)-\lambda_k(M) \big],
  \end{equation}
  followed by a normalization to get unit trace.  Using interlacing
  inequalities\footnote{A particularly convenient form for this task
    can be found in \cite[Thm~4.3]{BerKenKurMug_tams19}.} for the rank
  one perturbation (up to rescaling) of $A$ by $P$, one can show that
  \eqref{eq:Agrachev_retraction} is a well-defined retraction.  In
  particular, \eqref{eq:phi_eig_action} does not produce a zero matrix
  (which cannot be trace-normalized) and the result of
  \eqref{eq:Agrachev_retraction} is not in $\mathcal{P}_{k,\nu}$ for
  any $t$.

  We now obtain that $\mathcal{R}_{k,\nu}$ is homotopy equivalent to
  the Thom space of the normal bundle of $\mathcal{P}_{k,\nu}$ in
  $\mathcal{R}_{k,\nu}$.  Indeed, a tubular neighborhood $T$ of
  $\mathcal{P}_{k,\nu}$ in $\mathcal{R}_{k,\nu}$ is diffeomorphic to
  the normal bundle of $\mathcal{P}_{k,\nu}$, while the above
  retraction allows one to show $\mathcal{R}_{k,\nu}$ is homotopy
  equivalent to $\mathcal{R}_{k,\nu} \big/ \left(\mathcal{R}_{k,\nu}
    \setminus T\right)$.

  The normal bundle of $\mathcal{P}_{k,\nu}$ in
  $\mathcal{R}_{k,\nu}$ is a Whitney sum of the normal
  bundle of $\mathcal{P}_{k,\nu}$ in
  \begin{equation}
    \label{eq:submanifoldPhat_def}
    \widehat{\mathcal{P}}_{k,\nu}
    := \left\{\frac1{\nu-k}P \in \Sym_\nu \colon P^2=P,\ 
      \dim\Ker P = k \right\},
  \end{equation}
  and the normal bundle of $\widehat{\mathcal{P}}_{k,\nu}$ in
  $\mathcal{R}_{k,\nu}$. The fiber in the former bundle is
  $(\Ker P)^\perp$: it consists of the directions in which $e$ can
  rotate out of $\Ker P$. Therefore the former bundle is
  $\Taut_\FF(\nu-k, \nu-1)$.  The fiber in the normal bundle of
  $\widehat{\mathcal{P}}_{k,\nu}$ in $\mathcal{R}_{k,\nu}$ consists of
  all self-adjoint perturbations to the operator $\frac1{\nu-k}P$ that
  preserve its kernel and unit trace.  Identifying these with the space
  of traceless self-adjoint operators on $(\Ker P)^\perp$, we get
  $S_0^2 \Taut_\FF(\nu-k, \nu-1)$.  We get \eqref{explicitE} after
  recalling that $\nu-k=i-1$.

  To calculate the rank we use
  \begin{equation}
    \label{eq:rank_data}
    \rank\big(\Taut_\FF(i-1, \nu-1)\big)
    =
    \begin{cases}
      i-1, & \FF=\R, \\
      2(i-1) & \FF=\C,
    \end{cases}
  \end{equation}
  and
  \begin{equation}
    \label{eq:rank_data2}
    \rank\big(S_0^2\Taut_\FF(i-1, \nu-1)\big)
    =
    \begin{cases}
      \frac12(i-1)i - 1, & \FF=\R, \\
      (i-1)^2-1 & \FF=\C,
    \end{cases}
  \end{equation}
  giving $\frac12(i-1)(i+2)-1$ in total in the real case and
  $i^2-2$ in the complex case.

  Recall that a real vector bundle $E$ is orientable if and only if its
  first Stiefel--Whitney class $w_1(E) \in H^1(B, \Z_2)$ vanishes
  (here $B$ is the base of the bundle).  The first Stiefel--Whitney
  class is additive with respect to the Whitney sum, therefore
  \newcommand{\tmpE}{\mathcal{E}}
  \begin{equation}
    \label{eq:swclass_step1}
    w_1(E_{i,\nu}) = w_1(S^2_0 \tmpE) + w_1(\tmpE),
    \qquad
    \tmpE = \Taut_\R(i-1, \nu-1).
  \end{equation}
  Using additivity on equation \eqref{trivial1} gives
  $w_1(S^2_0 \tmpE)=w_1(S^2 \tmpE)$ because $w_1$ is zero for
  the trivial bundle.  The classical formulas for the
  Stiefel--Whitney classes of symmetric tensor power (see, for
  example, \cite[Sec.~19.5.C, Theorem 3]{FF16}) yield
  $w_1(S^2 \tmpE)=(\rank \tmpE + 1) w_1(\tmpE)$ and, finally,
  \begin{equation}
    \label{w1}
    w_1(E_{i,\nu})
    = (\rank \tmpE+2) w_1(\tmpE),
    \qquad
    \tmpE = \Taut_\R(i-1, \nu-1).
  \end{equation}
  Since the real tautological bundle $\tmpE$ is not orientable and has
  rank $i-1$, $w_1(E_{i,\nu})$ vanishes if and only if $i+1$ is zero
  modulo 2, completing the proof of the lemma.
\end{proof}

Recall that the oriented Grassmannian $\widetilde{\Gr}_\R(k,n)$
consisting of the \emph{oriented} $k$-dimensional subspaces in $\R^n$
is a double cover of $\Gr_\R(k,n)$.  Let $\tau$ denote the
orientation-reversing involution on $\widetilde{\Gr}_\R(k,n)$.  In the
space of $q$-chains of $\widetilde{\Gr}_\R(k,n)$ over the ring $\Z$ we
distinguish the subspace of chains which are skew-symmetric with
respect to $\tau$: $\tau(\alpha)=-\alpha$, where $\alpha$ is a chain.
The subspaces of skew-symmetric $q$-chains are invariant under the
boundary operator and therefore define a complex.  The homology groups
of this complex will be denoted $H_q\big(\Gr_\R(k,n); \widetilde\Z)$.
In the sequel we refer to them as \term{twisted homologies}, as they
are homologies with local coefficients in the module of twisted
integers $\widetilde\Z$, i.e.\ $\Z$ considered as the module
corresponding to the nontrivial action of $\Z_2$ on $\Z$.

\begin{proof}[Proof of Theorem~\ref{thm:homology_geom},
  part~(\ref{item:Hvia_Grass})]
  In the case $1 < i(x) \leq \nu(x)$, we start from
  equation~\eqref{eq:Hvia_Rspace},
  \begin{align}
     H_r\Big(U_x^{+\epsilon}(\lambda_k),\,
    U_x^{-\epsilon}(\lambda_k)\Big)
    &\cong
      \widetilde{H}_{r-\mu(x)}\Big(\mathcal{S}\mathcal{R}_{k,\nu}\Big)
      \cong \widetilde{H}_{r-\mu(x)}\Big(\Sigma\mathcal{R}_{k,\nu}\Big)
    \\ \nonumber
    &\cong \widetilde{H}_{r-\mu(x)}\Big(\Sigma\big(\cT\left(E_{i,\nu}\right)\big) \Big),
  \end{align}
  where $E_{i,\nu}$ is given by \eqref{explicitE}.
  
  Recall \cite[Cor.~16.1.6]{Husemoller} that the reduced
  suspension of a Thom space of a vector bundle is homeomorphic to
  the Thom space of the Whitney sum of this bundle with the trivial
  rank 1 bundle $\theta^1$, i.e.
  \begin{equation}
    \label{Efin}
    \Sigma\big(\cT\left(E_{i,\nu}\right)\big)
    \cong \cT\left(\widehat E_{i,\nu}\right),
    \qquad
    \widehat E_{i, \nu} := E_{i, \nu}\oplus \theta^1
  \end{equation}
  The bundle $\widehat E_{i,\nu}$ is orientable if and only if
  $E_{i,\nu}$ is orientable; its rank is one plus the rank of
  $E_{i,\nu}$.  Lemma~\ref{lem:Agrachev} supplies both pieces of
  information.

  The bundle
  $\widehat E_{i,\nu}$ is orientable if $\FF=\C$ or if $\FF=\R$ and
  $i$ is odd, and we can use the homological version of the Thom
  isomorphism theorem \cite[Lemma 18.2]{Milnor-Stasheff}, which gives
  \begin{equation}
    \label{thomres}
    \widetilde H_{r-\mu(x)}\left(\cT(\widehat E_{i,\nu})\right)
    = H_{r-\mu(x)-s(i)}\big(\Gr_\FF(i-1,\nu-1)\big),
  \end{equation}
  which is the right-hand side of \eqref{eq:relhom_main} in 
  the corresponding cases.
  
  When $\mathbb F=\mathbb R$ and $i$ is even, the bundle
  $\widehat E_{i, \nu}$ is nonorientable \eqref{eq:relhom_main} results
  from the Thom isomorphism for nonorientable bundles
  \cite[Theorem 3.10]{Skl_fpm03}\footnote{An analogous
    result for cohomologies can be found in \cite{Rud80}.},
  \begin{equation}
    \label{thomresno}
    \widetilde H_{r-\mu(x)}\left(\cT(\widehat E_{i,\nu})\right)
    = H_{r-\mu(x)-s(i)}\big(\Gr_\FF(i-1,\nu-1);\widetilde \Z\big).
  \end{equation}

  In the special case $k=\nu$, not covered by
  Lemma~\ref{lem:Agrachev}, we compute directly using
  Lemma~\ref{lem:GM_factorization} and
  Remark~\ref{rem:boundary_cases},
  \begin{align}
    \label{eq:homology_k_nu}
    H_r\Big(U^{\lambda^c+\epsilon}(\lambda_\nu),\,
    U^{\lambda^c-\epsilon}(\lambda_\nu)\Big)
    &\cong
      H_{r-\mu(x)}\Big(U^{\lambda^c+\epsilon}(\lambda_\nu|_N),\,
      U^{\lambda^c-\epsilon}(\lambda_\nu|_N)\Big)
    \\ \nonumber
    &\cong 
      H_{r-\mu(x)}\Big(\mathbb{B}^{s(\nu)}, \emptyset\Big)
      \cong H_{r-\mu(x)}\Big(\{x\}\Big)
    \\ \nonumber
    &\cong H_{r-\mu(x)-s(i)}\Big(\Gr_\FF(i-1,\nu-1)\Big),
  \end{align}
  since $i=1$, $s(i)=0$ and $\Gr_\FF(0,\nu-1)$ is a single point.
\end{proof}

\begin{remark}
  \label{rem:poincare_lefschetz_proof}
  One can also derive \eqref{thomresno}, using Poincar\'e and
  Poincar\'e--Lefschetz dualities in their usual and skew form,
  mimicking the proof of \cite[Lemma 18.2]{Milnor-Stasheff}.  This
  alternative derivation is included as
  Appendix~\ref{sec:thomhom_computation}.
\end{remark}


\section{Proof of part (\ref{item:criticality}) of
    Theorem~\ref{thm:critical}: criticality}
\label{sec:main_theorem_part1}

$\mathbb{F} = \C$. In the setting of
Theorem~\ref{thm:homology_geom}, the Poincar\'e polynomials of the
relative homology groups
$H_*\left(U^{\lambda_k(x)+\epsilon}(\lambda_k),
  U^{\lambda_k(x)-\epsilon}(\lambda_k)\right)$ is equal to
\begin{equation}
  \label{eq:PoincareGrassmannianComplex_0}
  t^{\mu(x)+ s(i)} P_{\Gr_\C(i-1, \nu-1)}(t). 
\end{equation}

Betti numbers for complex Grassmannians were established by Ehresmann,
see \cite[Theorem on p.~409, section II.7]{Ehresmann1934}.  The
$r$-th Betti number is zero if $r$ is odd and is equal to the number
of Young diagrams with $r/2$ cells that fit inside the $k\times (n-k)$
rectangle, if $r$ is even. The Poincar\'e polynomial $P_{\Gr_\C(k,n)}$
is nothing but the generating function for this \term{restricted
  partition problem}.  The latter is well known to be of the form
\begin{equation}
\label{eq:PoincareGrassmannianComplex_1}	
P_{\Gr_\C(k,n)}(t)
= {\binom{n}{k}}_{t^2},
\end{equation}
see \cite[Theorem~3.1, p.~33]{Andrews76}.  By
\eqref{eq:PoincareGrassmannianComplex_0} and
\eqref{eq:PoincareGrassmannianComplex_0}, the Poincar\'e polynomials
of the relative homology groups
$H_*\left(U^{\lambda_k(x)+\epsilon}(\lambda_k),
  U^{\lambda_k(x)-\epsilon}(\lambda_k)\right)$ does not vanish, so $x$
is a critical points.

$\mathbb{F} = \R$.  The calculation of the Poincar\'e polynomials of
integer homologies will be done in the next section, but it is equal
to zero in some cases and so does not lead to the proof of
criticality. Instead, we show that $\mathbb Z_2$-homology groups
$H_*\bigl(U_x^{+\epsilon}(\lambda_k),\,
U_x^{-\epsilon}(\lambda_k);\mathbb Z_2)$ are nontrivial.

Let $P_{Y, \Z_2}(t)$ be the Poincar\'e polynomial of
$\Z_2$-homologies of a topological space $Y$, i.e. the coefficient of
$t^i$ in $P_{Y, \Z_2}(t)$ is equal to the number of copies of $\Z_2$s
in $H_i(Y, \Z_2)$.  The Poincar\'e polynomial of $\Z_2$-homologies of
the Grassmannian $\Gr_\R(k,n)$ is well known (\cite[Theorem~3.1,
p.~33]{Andrews76}, \cite[\S 7]{Milnor-Stasheff}) to be
\begin{equation}
  \label{Poincare_torsion_Z2_Grassmannian}
  P_{\Gr_\R(k,n), \Z_2}(t) = \binom{n}{k}_t.
\end{equation}

Moreover, when the coefficients are $\Z_2$, there is no difference
between symmetric and skew-symmetric chains, therefore if
$P_{Y, \widetilde{\Z_2}}(t)$ is the Poincar\'e polynomial of the
twisted $\Z_2$-homologies of Y, then
$P_{Y, \widetilde{\Z_2}}(t) = P_{Y, \Z_2}(t)$.  Based on this and
\eqref{Poincare_torsion_Z2_Grassmannian}, in the setting of
Theorem~\ref{thm:homology_geom}, the Poincar\'e polynomial of the
relative $\mathbb Z_2$-homology groups
$H_*\left(U^{\lambda_k(x)+\epsilon}(\lambda_k),
  U^{\lambda_k(x)-\epsilon}(\lambda_k);\mathbb Z_2 \right)$ is equal
to
\begin{equation}
  \label{Poincare_torsion_Z2_Grassmannian_1}
  t^{\mu(x)+ s(i)}\binom{\nu-1}{i-1}_t,
\end{equation}
which is not zero. The proof of critically in the case of
$\mathbb F=\mathbb R$ is complete.

\section{Proof of Theorem~\ref{thm:critical}, part
  (\ref{item:Morse_polynomial}): computing the Poincar\'e
  polynomials}
\label{sec:main_theorem_proofs}

Theorem~\ref{thm:critical} will be obtained as a combination of the
next two lemmas.  Lemma~\ref{oldpar2} provides an expression for the
Poincar\'e polynomial of twisted homologies
$H_*\big(\Gr_\R(i-1,\nu-1); \widetilde \Z\big)$ by relating it to the
Poincar\'e polynomial of the oriented Grassmannian
$\widetilde{\Gr}_\R$.  Lemma~\ref{lem:GrPolynomials} below collates
known expressions for the Poincar\'e polynomials of Grassmannians and
oriented Grassmannians.

\begin{lemma}
  \label{oldpar2}
  In the setting of Theorem~\ref{thm:homology_geom},
  the Poincar\'e polynomials of the relative homology groups
  $H_*\left(U^{\lambda_k(x)+\epsilon}(\lambda_k),
    U^{\lambda_k(x)-\epsilon}(\lambda_k)\right)$ is equal to
  \begin{equation}
    \label{eq:Morse_contrib}
    t^{\mu(x)+ s(i)}
    \begin{cases}
      P_{\Gr_\R(i-1, \nu-1)}(t),
      & \text{$\FF=\R$ and $i$ is odd}, \\
      P_{\widetilde{\Gr}_\R(i-1, \nu-1)}(t)-
      P_{\Gr_\R(i-1, \nu-1)}(t),
      & \text{$\FF=\R$ and $i$ is even},\\
      P_{\Gr_\C(i-1, \nu-1)}(t),
      & \text{$\FF=\C$}. \\
    \end{cases}
  \end{equation}
  where $P_Y(t)$ denotes the Poincar\'e polynomial of the manifold
  $Y$.
\end{lemma}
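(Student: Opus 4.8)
The plan is to derive the first and third cases of \eqref{eq:Morse_contrib} directly from part~(\ref{item:Hvia_Grass}) of Theorem~\ref{thm:homology_geom}, and to handle the even real case by a transfer argument for the orientation double cover of the relevant Grassmannian. Recall that the Poincar\'e polynomial records only the \emph{ranks} of homology groups, and that in \eqref{eq:relhom_main} the degree on the left is shifted from the degree on the right by $\mu(x)+s(i)$.

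For $\FF=\C$, and for $\FF=\R$ with $i$ odd, equation~\eqref{eq:relhom_main} identifies $H_r\bigl(U_x^{+\epsilon}(\lambda_k),U_x^{-\epsilon}(\lambda_k)\bigr)$ with the ordinary integral homology group $H_{r-\mu(x)-s(i)}$ of $\Gr_\C(i-1,\nu-1)$ (resp.\ $\Gr_\R(i-1,\nu-1)$). Summing $\rank H_r$ against $t^r$ and factoring out $t^{\mu(x)+s(i)}$ immediately gives $t^{\mu(x)+s(i)}P_{\Gr_\C(i-1,\nu-1)}(t)$ (resp.\ $t^{\mu(x)+s(i)}P_{\Gr_\R(i-1,\nu-1)}(t)$); no freeness of the integral homology is needed since only ranks enter. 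This is the first and third line of \eqref{eq:Morse_contrib}.

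It remains to treat $\FF=\R$ with $i$ even. Here $i-1\geq1$, so $\Gr_\R(i-1,\nu-1)$ is a genuine Grassmannian; write $k=i-1$, $n=\nu-1$. By \eqref{eq:relhom_main} and the degree shift it suffices to show $\rank H_q\bigl(\Gr_\R(k,n);\widetilde\Z\bigr)=\beta_q\bigl(\widetilde{\Gr}_\R(k,n)\bigr)-\beta_q\bigl(\Gr_\R(k,n)\bigr)$ for all $q$. I would prove this rationally (legitimate since these ranks equal the dimensions of the corresponding rational homology). Let $\pi\colon\widetilde{\Gr}_\R(k,n)\to\Gr_\R(k,n)$ be the orientation double cover with deck involution $\tau$. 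The chain map $\pi_\#\colon C_*(\widetilde{\Gr}_\R(k,n);\Q)\to C_*(\Gr_\R(k,n);\Q)$ is surjective and, since $2$ is invertible in $\Q$, splits via the transfer $\tfrac12\,\mathrm{tr}$; its kernel is the subcomplex of $\tau$-skew-symmetric rational chains, which is $C_*^-\otimes\Q$ where $C_*^-$ is the skew-symmetric integral subcomplex used in the excerpt to define $H_*(\Gr_\R(k,n);\widetilde\Z)$ (clearing denominators shows the rational skew-symmetric chains are exactly $C_*^-\otimes\Q$; cf.\ the construction recalled before the proof of part~(\ref{item:Hvia_Grass}) of Theorem~\ref{thm:homology_geom} and \cite{Hatcher,DK01}). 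Hence
\begin{equation*}
  C_*\bigl(\widetilde{\Gr}_\R(k,n);\Q\bigr)\ \cong\ C_*\bigl(\Gr_\R(k,n);\Q\bigr)\ \oplus\ \bigl(C_*^-\otimes\Q\bigr),
\end{equation*}
and passing to homology gives $H_q\bigl(\widetilde{\Gr}_\R(k,n);\Q\bigr)\cong H_q\bigl(\Gr_\R(k,n);\Q\bigr)\oplus\bigl(H_q(\Gr_\R(k,n);\widetilde\Z)\otimes\Q\bigr)$.

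Comparing dimensions and using $\dim_\Q\!\bigl(H_q(\Gr_\R(k,n);\widetilde\Z)\otimes\Q\bigr)=\rank H_q\bigl(\Gr_\R(k,n);\widetilde\Z\bigr)$ yields the desired identity $\rank H_q\bigl(\Gr_\R(k,n);\widetilde\Z\bigr)=\beta_q\bigl(\widetilde{\Gr}_\R(k,n)\bigr)-\beta_q\bigl(\Gr_\R(k,n)\bigr)$, and after the degree shift by $\mu(x)+s(i)$ this is the middle line of \eqref{eq:Morse_contrib}. The main point requiring care — though it is standard bookkeeping rather than a genuine obstacle — is the identification of the kernel of $\pi_\#$ over $\Q$ with the skew-symmetric subcomplex that defines $H_*(\cdot;\widetilde\Z)$, and the verification that tensoring with $\Q$ commutes with forming this subcomplex so that the rank of the integral twisted homology equals the rational dimension; everything else is the usual transfer/averaging argument for a $2$-fold cover.
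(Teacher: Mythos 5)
Your proposal is correct and follows essentially the same route as the paper. Both arguments handle the odd and complex cases by reading off part~(\ref{item:Hvia_Grass}) of Theorem~\ref{thm:homology_geom}, and both resolve the even real case by passing to $\Q$-coefficients and decomposing the chain complex of the oriented double cover $\widetilde{\Gr}_\R$ into $\tau$-symmetric and $\tau$-skew-symmetric parts (the paper writes the eigenspace splitting $c=\tfrac12(c+\tau c)+\tfrac12(c-\tau c)$ explicitly, while you phrase the same splitting via the transfer map and $\Ker\pi_\#$), so that the rational Betti numbers of $\widetilde{\Gr}_\R$ decompose as those of $\Gr_\R$ plus the ranks of the twisted homology.
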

	
\begin{proof}
  Since we already established part~(\ref{item:Hvia_Grass}) of
  Theorem~\ref{thm:homology_geom}, we only need to show that the
  Poincar\'e polynomial of the homology groups
  $H_*(\Gr_\R(i-1,\nu-1); \widetilde \Z)$ is equal to
  $P_{\widetilde{\Gr}_\R(i-1, \nu-1)}(t)- P_{\Gr_\R(i-1, \nu-1)}(t)$.

  We will use homologies with coefficients in $\Q$ (or $\R$).  Indeed,
  since the Betti numbers ignore the torsion part of $H_r(\cdot; \Z)$,
  the Universal Coefficients Theorem (see,
  e.g. \cite[Sec.~3.A]{Hatcher}) implies they can be calculated as the
  rank of $H_r(\cdot; G)$ with any torsion-free abelian group $G$.
  The benefit of using $\Q$ is that now any chain $c$ in
  $\widetilde{\Gr}_\R(i-1,\nu-1)$ can be uniquely represented as a sum
  of a symmetric and a skew-symmetric chains with coefficients in
  $\Q$,
  \begin{equation}
    \label{ASdecomp}
    c=\cfrac{1}{2}\bigl(c+\tau(c)\bigr)+\cfrac{1}{2}\bigl(c-\tau(c)\bigr),
  \end{equation}
  where $\tau$ is the orientation reversing involution of
  $\widetilde {\mathrm{Gr}}(i-1,\nu-1)$ (viewed as a double cover of
  $\mathrm{Gr}(i-1,\nu-1)$).  The analogous statement is of course
  wrong in integer coefficients, as $\cfrac{1}{2}\notin \mathbb Z$.

  Since the boundary operator preserves the parity of a chain, the
  homology $H_r\big(\widetilde\Gr_\R(i-1,\nu-1); \Q\big)$ decomposes
  into the direct sum of homologies of $\tau$-symmetric and
  $\tau$-skew-symmetric chains on $\widetilde\Gr_\R(i-1,\nu-1)$.  The
  former homology coincides with the usual homology of
  $\Gr_\R(i-1,\nu-1)$.  The latter yields, by definition, the twisted
  $\Q$-homology of $\Gr_\R(i-1,\nu-1)$.  To summarize, we obtain
  \begin{equation}
    \label{decompQ}
    H_r(\widetilde\Gr_\R(i-1,\nu-1); \Q)
    = H_r\big(\Gr_\R(i-1,\nu-1); \Q\big)
    \oplus H_r\big( \Gr_\R(i-1,\nu-1); \widetilde{\Q}\big).
  \end{equation}
  The sum in \eqref{decompQ} translates into the
  sum of Poincar\'e polynomials, yielding the middle line in
  \eqref{eq:Morse_contrib}.
\end{proof}

\begin{lemma}
  \label{lem:GrPolynomials}
  Let $k,n \in \mathbb{N}$, $k \leq n$.
  The Poincar\'e polynomials of the Grassmannians $\Gr_\C(k,n)$,
  $\Gr_\R(k,n)$ and $\widetilde{\Gr}_\R(k,n)$ are given by
  \begin{align}
    \label{eq:PoincareGrassmannianComplex}
    P_{\Gr_\C(k,n)}(t)
    &= {\binom{n}{k}}_{t^2},
    \\[10pt]
    \label{eq:PoincareGrassmannianReal}
    P_{\Gr_\R(k,n)}(t)
    &=
      \begin{cases}
        {\dbinom{\lfloor n/2\rfloor}{\lfloor k/2 \lfloor}}_{t^4},
        & \mbox{if } k(n-k) \mbox{ is even}\\[10pt]
        (1+t^{n-1})\dbinom{n/2-1}{(k-1)/2}_{t^4},
        & \mbox{if } k(n-k) \mbox{ is odd},
      \end{cases}
    \\[10pt]
    \label{eq:PoincareGrassmannianOrientedReal}
    P_{\widetilde \Gr_\R(k, n)}(t)
    &=
      \begin{cases}
        (1+t^{n-k})\dbinom{(n-1)/2}{(k-1)/2}_{t^4}
        &  \mbox{if $k$ is odd, $n$ is odd},\\[10pt]
        (1+t^{n-1})\dbinom{n/2-1}{(k-1)/2}_{t^4},
        & \mbox{if $k$ is odd, $n$ is even},\\[10pt]
        \dfrac{(1+t^k)(1+t^{n-k})}{1+t^n} \dbinom{n/2}{k/2}_{t^4},
        & \mbox{if $k$ is even, $n$ is even}.
      \end{cases}
  \end{align}
\end{lemma}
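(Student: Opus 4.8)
The plan is to assemble \eqref{eq:PoincareGrassmannianComplex}--\eqref{eq:PoincareGrassmannianOrientedReal} from the classical computations of the (rational) homology of Grassmannians, matching conventions to the $q$-binomial normalization \eqref{qbinom}. Throughout it suffices to work with rational (or real) coefficients, since the Poincar\'e polynomial records only the ranks of the integral homology and these are insensitive to torsion by the Universal Coefficients Theorem \cite[Sec.~3.A]{Hatcher}.

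For the complex Grassmannian I would invoke the Schubert cell decomposition of $\Gr_\C(k,n)$: its cells are indexed by Young diagrams $\lambda$ contained in the $k\times(n-k)$ rectangle, with the cell attached to $\lambda$ of real dimension $2|\lambda|$. Since all cells are even-dimensional, the cellular chain complex has vanishing differential, the integral homology is free, and $\beta_{2m}\big(\Gr_\C(k,n)\big)$ equals the number of such $\lambda$ with $|\lambda|=m$. The generating function of this restricted partition count is exactly the Gaussian binomial $\binom{n}{k}_{t^2}$; this is the classical result of Ehresmann \cite{Ehresmann1934}, recorded in the stated form in \cite[Theorem~3.1]{Andrews76} (see also \cite[\S 7]{Milnor-Stasheff}), giving \eqref{eq:PoincareGrassmannianComplex}.

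For the real Grassmannian $\Gr_\R(k,n)$ and the oriented Grassmannian $\widetilde{\Gr}_\R(k,n)$ I would quote the literature. The rational Betti numbers of $\Gr_\R(k,n)$ were computed by Ehresmann \cite{Ehresmann1934,Ehresmann1937} (see also \cite[Theorem~IV, p.~108]{Iwamoto48}) and put in $q$-binomial form by Andrews \cite{Andrews76}; the two cases in \eqref{eq:PoincareGrassmannianReal} are distinguished by the parity of $\dim_\R\Gr_\R(k,n)=k(n-k)$, the odd case forcing $k$ odd and $n$ even. The rational Betti numbers of $\widetilde{\Gr}_\R(k,n)$ go back to Chern \cite{Chern1951}; they can alternatively be read off from the general de Rham theory of compact homogeneous spaces \cite[Chapter XI, pp.~494--496]{GWH1976} or from the unified algorithm of \cite{CasKod13}, and sorting out the three parity cases for $(k,n)$ yields \eqref{eq:PoincareGrassmannianOrientedReal}. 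In each case one uses the symmetries $\Gr_\R(k,n)\cong\Gr_\R(n-k,n)$ and $\widetilde{\Gr}_\R(k,n)\cong\widetilde{\Gr}_\R(n-k,n)$, together with the fact that $q$-binomial coefficients are polynomials \cite[Corollary~2.6]{KC02}, to normalize the resulting expressions.

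There is no deep obstacle, as the lemma is a compilation; the genuinely fiddly point is reconciling the various classical sources, which state their answers in terms of Schubert symbols or Young diagrams rather than $q$-binomials, and correctly sorting the parity cases — in particular disentangling the three subcases for $\widetilde{\Gr}_\R(k,n)$ (with the fourth combination $k$ even, $n$ odd reduced to the first via the symmetry above) and checking the degenerate values $k=0$, $k=n$, and $k=1$. As a consistency check, the splitting $H_r(\widetilde{\Gr}_\R(k,n);\Q)\cong H_r(\Gr_\R(k,n);\Q)\oplus H_r(\Gr_\R(k,n);\widetilde{\Q})$ established in the proof of Lemma~\ref{oldpar2} forces every coefficient of $P_{\widetilde{\Gr}_\R(k,n)}(t)$ to be at least the corresponding coefficient of $P_{\Gr_\R(k,n)}(t)$, which one can verify directly against \eqref{eq:PoincareGrassmannianReal} and \eqref{eq:PoincareGrassmannianOrientedReal}.
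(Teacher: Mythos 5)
Your proposal is correct and follows essentially the same approach as the paper: both treat the lemma as a compilation of classical results, invoking the Schubert/Ehresmann computation for $\Gr_\C(k,n)$, the Iwamoto/Andrews description of $H_*(\Gr_\R(k,n);\Q)$ in $q$-binomial form, and the de~Rham cohomology of the homogeneous space $SO(n)/(SO(k)\times SO(n-k))$ (via \cite{GWH1976}, with \cite{Chern1951,CasKod13} as alternatives) for $\widetilde{\Gr}_\R(k,n)$, sorting cases by parity and using the symmetry $k \leftrightarrow n-k$ to reduce redundant subcases. Your added sanity check against the direct-sum decomposition from Lemma~\ref{oldpar2} is a useful cross-validation not present in the paper but consistent with it.
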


\begin{remark}
  We will not need the Poincar\'e polynomial in the last line of
  \eqref{eq:PoincareGrassmannianOrientedReal} and we include it for
  completeness only.  The case of even $k$ and odd $n$ is covered by
  the first line of \eqref{eq:PoincareGrassmannianOrientedReal} since
  $\widetilde \Gr_\R(k, n) = \widetilde \Gr_\R(n-k, n)$.
\end{remark}

\begin{proof}
  The complex case was already discussed in the beginning of section
  \ref{sec:main_theorem_part1}.

  The $r$-th Betti number of the real Grassmannian has a similar
  combinatorial description \cite[Theorem IV, p. 108]{Iwamoto48}: it
  is equal to the number of Young diagrams of $r$ cells that fit
  inside the $k\times (n-k)$ rectangle and have even length
  differences for each pair of columns and for each pair of rows. From
  this it can be shown that the Poincar\'e polynomial
  $P_{\Gr_\R(k,n)}$ satisfies \eqref{eq:PoincareGrassmannianReal} (see
  also \cite[Theorem 5.1]{CasKod13}).  We remark that for $k$ and $n$
  both even, \eqref{eq:PoincareGrassmannianReal} is a consequence of
  \eqref{eq:PoincareGrassmannianComplex} because the corresponding
  Young diagrams must be made up from $2\times 2$ squares.

  Finally, the oriented Grassmannian is a homogeneous space, namely
  \begin{equation}
    \label{eq:OrientedGrassmannianHomogeneous}
    \widetilde \Gr(k,n)\cong SO(n)/\bigl( SO(k)\times  SO(n-k)\bigr).
  \end{equation}
  The corresponding Poincar\'e polynomial has been computed within the
  general theory of de~Rham cohomologies of homogeneous spaces, see,
  for example, \cite[Chap.~XI]{GWH1976}.  Up to notation, the first
  line of \eqref{eq:PoincareGrassmannianOrientedReal} corresponds to
  \cite[Lines 2-3, col.~3 of Table~II on p.~494]{GWH1976}, the second
  line of \eqref{eq:PoincareGrassmannianOrientedReal} corresponds to
  \cite[Lines 2-3, col.~1 of Table~III on p.~496]{GWH1976} and the
  third line of \eqref{eq:PoincareGrassmannianOrientedReal} corresponds to
  \cite[Lines 2-3, col.~2 of Table~III on p.~495]{GWH1976}.
\end{proof}

\begin{proof}[Proof of Theorem~\ref{thm:critical}, part (\ref{item:Morse_polynomial})]
  We first establish equation~\eqref{eq:Morse_contrib_nonsmooth}.
  Using Lemma~\ref{oldpar2} as well as Lemma~\ref{lem:GrPolynomials}
  with
  \begin{equation*}
    k:=i-1
    \qquad\mbox{and}\qquad
    n:=\nu-1, 
  \end{equation*}
  \begin{enumerate}
  \item 
    The first line of \eqref{eq:Morse_contrib_nonsmooth} is obtained
    directly from the first line of \eqref{eq:Morse_contrib} and the
    first line of \eqref{eq:PoincareGrassmannianReal}.
  \item 
    The second line of \eqref{eq:Morse_contrib_nonsmooth} is obtained from
    the second line of \eqref{eq:Morse_contrib} by combining the second
    lines of \eqref{eq:PoincareGrassmannianReal} and
    \eqref{eq:PoincareGrassmannianOrientedReal}.
  \item 
    The third line of \eqref{eq:Morse_contrib_nonsmooth} is obtained from
    the second line of \eqref{eq:Morse_contrib} by combining the first
    lines of \eqref{eq:PoincareGrassmannianReal} and
    \eqref{eq:PoincareGrassmannianOrientedReal}.
  \item 
    Finally, the last line of \eqref{eq:Morse_contrib_nonsmooth} is obtained
    directly from the last line of \eqref{eq:Morse_contrib} and
    \eqref{eq:PoincareGrassmannianComplex}.
  \end{enumerate}



  Finally, Morse inequalities \eqref{eq:Morse_inequalities} when $M$
  is compact are established by \cite[\S~45] {FF89} using  the tools identical to \cite[\S~5]{Milnor_MorseTheory}.
\end{proof}

\begin{proof}[Proof of Corollary \ref{extremum_criteria}]
  A critical point is a point of local maximum if and only if the local
  Morse data is homotopy equivalent to $(\mathbb B^d, \partial \mathbb
  B^d)$.

  If $x$ is a maximum, its contribution to the Poincar\'e polynomial is
  equal to $t^d$, which occurs only in the cases described by
  Corollary \ref{extremum_criteria}.
  
  To establish sufficiency, we compute the local Morse data at $x$.
  If condition (\ref{item:extremum_branch}) is satisfied, the normal
  data at the point $x$ has been computed in
  Remark~\ref{rem:normal_data_bottom},
  $(J,K) = \left(\mathbb{B}^{s(\nu)}, \partial \mathbb{B}^{s(\nu)}
  \right)$.  From condition (\ref{item:extremum_smooth}) we get the
  tangential data
  \begin{equation}
    \label{eq:tang_data_max}
    (P,Q) = \left(\mathbb{B}^{d-s(\nu)}, \partial
      \mathbb{B}^{d-s(\nu)} \right).
  \end{equation}
  By \cite[Thm I.3.7]{GM88}, the local Morse data is then
  \begin{equation}
    \label{eq:local_Morse_max}
    \bigl(P\times J, (P\times K) \cup (Q\times J)\bigr)
    \cong
    \left(\mathbb{B}^{d}, \partial
      \mathbb{B}^{d} \right),
  \end{equation}
  implying the point is a maximum.

  Similarly, a critical point is a point of local minimum if and only
  if the local Morse data is homotopy equivalent to
  $(\mathbb B^d, \emptyset)$.  If $x$ is a minimum, its contribution
  to the Poincar\'e polynomial is equal to $1$, which occurs only in the
  cases described by Corollary \ref{extremum_criteria}.

  Conversely, condition (\ref{item:extremum_branch}) implies the
  normal data is
  \begin{equation}
    \label{eq:normal_data_top_again}
    (J,K) = \left(\mathbb{B}^{s(\nu)}, \emptyset \right),
  \end{equation}
  see Remark~\ref{rem:boundary_cases}.  From condition (\ref{item:extremum_smooth}), the tangential
  data is
  \begin{equation}
    \label{eq:tang_data_min}
    (P,Q) = \left(\mathbb{B}^{d-s(\nu)}, \emptyset \right).
  \end{equation}
  Combining these using \cite[Thm I.3.7]{GM88} gives the required
  result.
\end{proof}

\appendix

\section{Hellmann--Feynman Theorem}
\label{sec:hellmann_feynman}

In this section we review the mathematical formulation of the formula
that is known in physics as Hellmann--Feynman Theorem or first-order
perturbation theory.  We base our formulation on \cite[Thm II.5.4]{Kato}
(see also \cite{Gru_mn09}).

\begin{theorem}
  \label{thm:hellmann-feynman}
  Let $T: \R \to \Sym_n(\F)$ be differentiable at $x=0$.  Let
  $\lambda$ be an eigenvalue of $T(0)$ of multiplicity $\nu$,
  $\esp\subset \F^n$ be its eigenspace.
  Then, for small
  enough $x$, there are exactly $\nu$ eigenvalues of $T(x)$ close to
  $\lambda$ and they are given by
  \begin{equation}
    \label{eq:eigenvalue_derivative}
    \lambda_j(x) = \lambda + x \mu_j + o(x),
    \qquad
    j=1,\ldots,\nu,
  \end{equation}
  where $\{\mu_j\}$ are the eigenvalues of the $\nu \times \nu$ matrix
  $\big(T'(0)\big)_{\esp}$, see \eqref{eq:restriction_def}.
\end{theorem}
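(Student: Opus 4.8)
The plan is to follow the classical Riesz-projector (Cauchy-integral) method of analytic perturbation theory, as in \cite[Ch.~II]{Kato}, paying attention to the fact that $T$ is assumed differentiable only at the single point $x=0$. Fix a positively oriented circle $\Gamma\subset\C$ centered at $\lambda$ whose interior contains no eigenvalue of $T(0)$ other than $\lambda$. Since differentiability of $T$ at $0$ implies $T(x)\to T(0)$ as $x\to0$, for all sufficiently small $x$ the contour $\Gamma$ misses $\spec\bigl(T(x)\bigr)$, and the Riesz projector
\begin{equation*}
  P(x):=\frac{1}{2\pi i}\oint_\Gamma\bigl(zI-T(x)\bigr)^{-1}\,dz
\end{equation*}
is well defined; it is the orthogonal projector onto the span of the eigenspaces of those eigenvalues of $T(x)$ that lie inside $\Gamma$, and its rank equals their number counted with multiplicity. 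As $x\mapsto P(x)$ is continuous at $0$ and $P(x),P(0)$ are orthogonal projectors, $\|P(x)-P(0)\|<1$ for small $x$ forces $\rank P(x)=\rank P(0)=\nu$; together with the fact that the eigenvalues of $T(x)$ outside $\Gamma$ stay bounded away from $\lambda$, this yields the first assertion: exactly $\nu$ eigenvalues of $T(x)$ are close to $\lambda$.

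Next I would show that $P$ is differentiable at $0$. Writing $T(x)=T(0)+xT'(0)+o(x)$ and expanding the resolvent via $\bigl(zI-T(x)\bigr)^{-1}=\bigl(I-R_0(z)(xT'(0)+o(x))\bigr)^{-1}R_0(z)$ with $R_0(z):=(zI-T(0))^{-1}$ bounded uniformly on the compact set $\Gamma$, one obtains $\bigl(zI-T(x)\bigr)^{-1}=R_0(z)+xR_0(z)T'(0)R_0(z)+o(x)$ uniformly for $z\in\Gamma$; integrating gives $P(x)=P(0)+xP^{(1)}+o(x)$ with $P^{(1)}=\frac{1}{2\pi i}\oint_\Gamma R_0(z)T'(0)R_0(z)\,dz$. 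Let $\U:\FF^\nu\to\FF^n$ be a linear isometry with $\Ran\U=\esp$, so that $T(0)\U=\lambda\U$ and $\U^*T(0)=\lambda\U^*$. Because $R_0(z)$ acts on $\esp$ as multiplication by $(z-\lambda)^{-1}$, we have $\U^*R_0(z)T'(0)R_0(z)\U=(z-\lambda)^{-2}\U^*T'(0)\U$, hence $\U^*P^{(1)}\U=\bigl(\tfrac{1}{2\pi i}\oint_\Gamma(z-\lambda)^{-2}\,dz\bigr)\U^*T'(0)\U=0$. Consequently $\U^*P(x)\U=I_\nu+o(x)$; in particular it is invertible for small $x$, and $(\U^*P(x)\U)^{-1/2}=I_\nu+o(x)$.

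Now set $V(x):=P(x)\U\,(\U^*P(x)\U)^{-1/2}$. One checks that $V(x)^*V(x)=I_\nu$ and $\Ran V(x)=\Ran\bigl(P(x)\U\bigr)=\Ran P(x)$ for small $x$, that $V(0)=\U$, and, using $P(0)\U=\U$ together with the expansions above, that $V(x)=\U+x\,P^{(1)}\U+o(x)$. Since $\Ran P(x)$ is $T(x)$-invariant, the self-adjoint $\nu\times\nu$ matrix $\widehat T(x):=V(x)^*T(x)V(x)$ is unitarily equivalent to the restriction of $T(x)$ to $\Ran P(x)$, so its eigenvalues are exactly the $\nu$ eigenvalues of $T(x)$ inside $\Gamma$. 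We have $\widehat T(0)=\U^*T(0)\U=\lambda I_\nu$ and
\begin{equation*}
  \widehat T'(0)=(P^{(1)}\U)^*T(0)\U+\U^*T'(0)\U+\U^*T(0)(P^{(1)}\U)
  =\lambda(\U^*P^{(1)}\U)^*+\U^*T'(0)\U+\lambda\,\U^*P^{(1)}\U=\bigl(T'(0)\bigr)_\esp,
\end{equation*}
where the two outer terms vanish because $\U^*P^{(1)}\U=0$. Therefore $\widehat T(x)=\lambda I_\nu+x\bigl(T'(0)\bigr)_\esp+o(x)$ as self-adjoint matrices, and Weyl's inequality $|\lambda_j(A)-\lambda_j(B)|\le\|A-B\|$ for ordered eigenvalues shows that the eigenvalues of $\widehat T(x)$ --- equivalently, the $\nu$ eigenvalues of $T(x)$ near $\lambda$ --- are $\lambda+x\mu_j+o(x)$, where $\mu_1,\dots,\mu_\nu$ are the eigenvalues of $\bigl(T'(0)\bigr)_\esp$. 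This proves \eqref{eq:eigenvalue_derivative}.

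The main obstacle is carrying out this otherwise standard computation under the weak hypothesis that $T$ is differentiable only at $0$ rather than $C^1$ (or analytic, as in textbook statements): one must track $o(x)$ remainders uniformly over the compact contour $\Gamma$, verify from scratch the differentiability at $0$ of $P(x)$, of $(\U^*P(x)\U)^{-1/2}$, and of the transformation map $V(x)$, and confirm the cancellation $\lambda(\U^*P^{(1)}\U)^*+\lambda\,\U^*P^{(1)}\U=0$ that produces the clean first-order term $\bigl(T'(0)\bigr)_\esp$. Everything else --- $T(x)$-invariance of $\Ran P(x)$, constancy of $\rank P(x)$, and the Weyl estimate --- is routine.
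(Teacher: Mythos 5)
Your proof is correct and is essentially the standard Riesz-projector argument that the paper implicitly invokes by citing \cite[Thm II.5.4]{Kato}; the paper itself states the result without proof. You carry the argument through carefully under the weaker hypothesis of differentiability at a single point (rather than $C^1$ or analyticity), and the key cancellation $\U^*P^{(1)}\U=0$, the construction of the isometry $V(x)$ with $\Ran V(x)=\Ran P(x)$, and the Weyl-inequality finish are all handled correctly.
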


\section{Twisted Thom space homologies
  from Poincar\'e--Lefschetz duality}
\label{sec:thomhom_computation}

The Poincar\'e--Lefschetz duality (see, e.g.\ \cite[Theorem
3.43]{Hatcher}) states that if $Y$ is compact orientable
$n$-dimensional manifold with boundary $\partial Y$, then
\begin{equation}
  \label{Lefshetz1}
  H_r(Y, \partial Y)\cong H^{n-r}(Y),
  \quad
  H^r(Y,\partial Y)\cong H_{n-r}(Y).
\end{equation}

There is also a twisted analogue\footnote{See \cite[Prop
  15.2.10]{Geo_TopMethods} or \cite[chapter 5]{DK01}.  It is also
  sometimes known as Poincar\'e--Verdier duality, see
  \cite[VI.3]{Iversen}.} of Poincar\'e--Lefschetz duality for
non-oriented manifolds: if $Y$ is compact non-orientable
$n$-dimensional manifold with boundary $\partial Y$, then
\begin{equation}
  \label{Lefshetz2}
  H_r(Y, \partial Y) \cong H^{n-r}(Y; \widetilde{\Z}),
  \quad
  H^r(Y, \partial Y)\cong H_{n-r}(Y; \widetilde{\Z}).
\end{equation}
Here, the twisted homology $H_*(Y;\widetilde{\Z})$ was already
introduced in Section~\ref{sec:sublevel_change2}.  To define twisted
cohomology groups, denote by $\widetilde Y$ the orientation cover of
$Y$ and by $\tau$ the corresponding orientation-reversing involution.
$H^*(Y;\widetilde{\Z})$ are the cohomologies of the cochain complex
defined on the spaces of cochains $c$ satisfying
$c\bigl(\tau(\alpha)\bigr)=-c(\alpha)$ for every chain $\alpha$ in
$\widetilde Y$ (see \cite[Se. 3H]{Hatcher} for a more general point of
view).  Such cochains will be called \term{skew-symmetric cochains}.
Note that the space of skew-symmetric cochains can be identified with
the dual space to the space of skew-symmetric chains, as expected.

{\bf (a)} Assume now that $\nu$ is even.  Then the base
$\Gr(i-1,\nu-1)$ of the vector bundle $\widehat E_{i,\nu}$ is
non-orientable and, since the vector bundle is also non-orientable,
the total space $\B(\widehat E_{i,\nu})$ is orientable.  By the
usual Poincar\'e--Lefschetz duality \eqref{Lefshetz1},
\begin{equation}
  \label{case1_1}
  H_r\bigl(\B(\widehat E_{i,\nu}),
  \partial \B(\widehat E_{i,\nu})\bigr)
  \cong H^{\dim \widehat  E_{i, \nu}-r}
  \bigl(\B(\widehat E_{i,\nu})\bigr)
  \cong H^{\dim \widehat E_{i, \nu}-r}
  \bigl(\Gr_\R(i-1,\nu-1)\bigr),
\end{equation}
where $\dim \widehat E_{i, \nu}$ is the dimension of the total space
$\B(\widehat E_{i,\nu})$.  In the last identification we used that
the base $\mathrm{Gr}(i-1,\nu-1)$ is the deformation retract of the
total space of the bundle.

Further, since $\Gr(i-1,\nu-1)$ is non-orientable when $\nu$ is even,
we use the twisted analog of Poincar\'e duality for nonorientable
manifolds (see \cite[Theorem 3H.6]{Hatcher} as well as \eqref{Lefshetz2} with
$\partial Y=\emptyset$) to get
\begin{equation} 
  \label{case1_2}
  H^{\dim \widehat E_{i, \nu}-r}\big(\Gr_\R(i-1,\nu-1)\big)
  \cong H_{r-s(i)}\big(\Gr_\R(i-1,\nu-1);\widetilde\Z),
\end{equation}
where we used 
\begin{equation}
  \label{eq:dimension_differences_1}
  \dim \widehat E_{i, \nu}-\dim\Gr_\R(i-1,\nu-1)
  = \rank \widehat E_{i, \nu}
  = s(i).
\end{equation}

{\bf (b)} Consider the case of odd $\nu$. Then the base
$\Gr(i-1,\nu-1)$ is orientable, the bundle is non-orientable and
therefore the total space $\B(\widehat E_{i,\nu})$ is
non-orientable.  By the twisted
Poincar\'e-Lefschetz duality \eqref{Lefshetz2},
\begin{equation}
  \label{case2_1}
  H_r\bigl(\B(\widehat E_{i,\nu}),
  \partial \B(\widehat E_{i,\nu})\bigr)
  \cong H^{\dim \widehat E_{i, \nu}-r}
  \bigl(\B(\widehat E_{i,\nu}), \widetilde{\Z}\bigr)
\end{equation}

The orientation double cover $\doublehat E_{i,\nu}$ of
$\widehat E_{i,\nu}$ can be constructed from the tautological bundle
of the oriented Grassmannian $\widetilde{\Gr}_\R(i-1,\nu-1)$ in the
same way as $\widehat E_{i,\nu}$ was constructed from the tautological
bundle of the Grassmannian $\Gr_\R(i-1,\nu-1)$ by relations
\eqref{explicitE} and \eqref{Efin}.  In particular,
$\doublehat E_{i,\nu}$ is a bundle of rank $s(i)$ over the oriented
Grassmannian $\widetilde{\Gr}_\R(i-1,\nu-1)$.  Therefore, retracting
the unit ball bundle $\B(\doublehat E_{i,\nu})$ of
$\doublehat E_{i,\nu}$ to its base, we get that the integer cohomology
groups of $\B(\doublehat E_{i,\nu})$ are isomorphic to the integer
cohomology groups of the oriented Grassmannian
$\widetilde{\Gr}_\R(i-1,\nu-1)$, i.e.
\begin{equation*}
  H^{\dim \widehat E_{i, \nu}-r}\bigl(\B(\doublehat E_{i,\nu})\bigr)
  \cong H^{\dim \widehat E_{i, \nu}-r}(\widetilde{\Gr}_\R(i-1,\nu-1)).
\end{equation*}
Moreover, the retraction can be made to preserve the spaces of
skew-symmetric chains, which implies that
\begin{equation}
  \label{eq:cease2_2}
  H^{\dim \widehat E_{i, \nu}-r}\bigl(\B(\widehat E_{i,\nu}); \widetilde{\Z}\bigr)
  \cong H^{\dim \widehat E_{i, \nu}-r}(\Gr_\R(i-1,\nu-1); \widetilde \Z ).
\end{equation}

When $\nu$ is odd, $\Gr_\R(i-1,\nu-1)$ is orientable and so is
$\widetilde{\Gr}_\R(i-1,\nu-1)$.  Moreover, the map from the usual
Poincar\'e duality (see \cite [Thm.~3.30]{Hatcher} as well as
\eqref{Lefshetz1} with $\partial T = \emptyset$) applied to
$\widetilde{\Gr}_\R(i-1,\nu-1)$ sends the equivalence classes of
skew-symmetric cochains to the corresponding skew-symmetric chains.
Thus, we arrive to
\begin{equation}
  \label{case2_3}
  H^{\dim \widehat E_{i, nu}-r}(\Gr_\R(i-1,\nu-1); \widetilde \Z)
  \cong H_{r-s(i)}(\Gr_\R(i-1,\nu-1); \widetilde \Z).
\end{equation}

To summarize, we get the corresponding line in (\ref{eq:relhom_main})
whether $\nu$ is even or odd.

\bibliographystyle{myalpha}
\bibliography{eigmorse}

\end{document}